\newtheorem{theorem}{Theorem}[section]
\newtheorem{lemma}[theorem]{Lemma}
\newtheorem{proposition}[theorem]{Proposition}
\newtheorem{corollary}[theorem]{Corollary}
\theoremstyle{definition}
\newtheorem{definition}[theorem]{Definition}
\newtheorem{terminology}[theorem]{Terminology}
\newtheorem{procedure}[theorem]{\emph{Macaulay2} Code}
\newtheorem{remark}[theorem]{Remark}
\newtheorem{example}[theorem]{Example}
\newtheorem{question}[theorem]{Question}
\newtheorem{theoremx}{Theorem}
\newtheorem{corollarymx}[theoremx]{Corollary}
\newcommand{\Ass}{\operatorname{Ass}}
\newcommand{\IM}{\operatorname{im}}
\newcommand{\car}{\operatorname{char}}
\DeclareMathOperator{\init}{in_{\prec}}
\newcommand{\reg}{\operatorname{reg}}
\begin{document}


\title{Connected domination in graphs and v-numbers of binomial edge ideals}

\author[D. Jaramillo-Velez]{Delio Jaramillo-Velez}
\address{
Departamento de
Matem\'aticas\\
Centro de Investigaci\'on y de Estudios
Avanzados del
IPN\\
Apartado Postal
14--740 \\
07000 Mexico City, CDMX.
}
\email{djaramillo@math.cinvestav.mx}
\author[L. Seccia]{Lisa Seccia}
\address{Max Planck Institute for Mathematics in the Sciences, Leipzig, Germany.}
\email{seccia@mis.mpg.de}


\thanks{The first author is supported by CONACyT Fellowship 862006. The second author was partially supported by PRIN 2020355BBY ``Squarefree Gr\"obner degenerations, special varieties and related topics"}

\subjclass[2020]{Primary 05E40, 13P10, 13P25; Secondary 05C69, 68M10.}

\maketitle

\begin{abstract}
The v-number of a graded ideal is an algebraic invariant introduced by Cooper et al., and originally motivated by problems in algebraic coding theory. In this paper we study the case of binomial edge ideals and we establish a significant connection between their v-numbers and the concept of connected domination in graphs. 
More specifically, we prove that the localization of the v-number at one of the minimal primes of the binomial edge ideal $J_G$ of a graph $G$ coincides with the connected domination number of the defining graph, providing a first algebraic description of the connected domination number. As an immediate corollary, we obtain a sharp combinatorial upper bound for the v-number of binomial edge ideals of graphs. Lastly, building on some known results on edge ideals, we analyse how the v-number of $J_G$ behaves under Gr\"obner degeneration when $G$ is a closed graph. 

\end{abstract}


\section{Introduction}\label{intro-section}

Let $K$ be a field, and  let $S=K[t_{1},\dots,t_{n}]=\bigoplus_{d=0}^{\infty}S_{d}$ be the polynomial ring in $n$ variables with the standard grading. Given a homogeneous ideal $I$ of $S$, the v-{\em number} of $I$ is the following invariant
$$
{\rm v}(I):=\min\{d\geq 0 \mid\, \exists\, f 
\in S_d \mbox{ and }\mathfrak{p} \in {\rm Ass}(I) \mbox{ with } (I\colon f)
=\mathfrak{p}\}.
$$
If we define  the v-number  of $I$  locally  at each associated prime  $\mathfrak{p}$ of $I$ as 
$$
{\rm v}_{\mathfrak{p}}(I):=\min\{d\geq 0 \mid\, \exists\, f 
\in S_d  \mbox{ with } (I\colon f)
=\mathfrak{p}\},
$$
it follows from the definition that $
{\rm v}(I)=\min\{{\rm v}_{\mathfrak{p}}(I) \mid \mathfrak{p} \in \Ass(I)\}.
$ We call ${\rm v}_{\mathfrak{p}}(I)$ the \emph{localization} of ${\rm v}(I)$ at $\mathfrak{p}$.
From a geometric point of view, the local definition of v-numbers extends the notion of degree of a point in a finite set of projective points given in \cite{GKR}. 
However, one of the main reasons that led to a systematic study of v-numbers comes from coding theory.
In this context, Cooper et al. \cite{CSTPV} introduced ${\rm v}$-numbers of graded ideals in order to investigate the asymptotic behavior of the \emph{minimum distance function} of projective Reed-Muller-type codes. \par 
Regardless of its meaning in coding theory, the minimum distance function $\delta _I$ (over the  integers) is defined for any graded ideal in terms of the Hilbert-Samuel multiplicity of $S/I$ (see \cite{NBPV18}, \cite{CSTPV}). 
If $I$ is radical, this function is non-increasing \cite{NBPV20} and one can define the \emph{regularity index of} $\delta _I$, denoted by $\text{reg}(\delta _I)$, as the smallest integer for which $\delta _I$ stabilizes. 
From the viewpoint of algebraic coding theory, determining the regularity index is important since non-trivial evaluation codes can occur only if the degrees of the defining polynomials are smaller than the index of regularity.
It turns out that,
given a finite set of projective points $\mathbb{X}$, the v-number of its vanishing ideal $I(\mathbb{X})$ coincides with the index of regularity of the Reed-Muller type codes $\{C_{\mathbb{X}}(d)\}_{d=1}^{\infty}$ associated to $\mathbb{X}$, that is $\text{reg}(\delta _{I(\mathbb{X})})={\rm v} (I(\mathbb{X}))$  {\rm \cite{CSTPV}}, where $\delta_{I(\mathbb{X})}(d)$ is the minimum distance of $C_{\mathbb{X}}(d)$ in the classical sense of coding theory. \par
So far, in addition to vanishing ideals, v-numbers of squarefree monomial ideals have been broadly studied. These ideals have an intrinsic combinatorial nature and their algebraic properties often carry information on their underlying combinatorial structures, and viceversa. In \cite{DR} the authors consider these ideals as edge ideals of clutters and find a combinatorial expression for their ${\rm v}$-numbers. This combinatorial description has been exploited to classify $W_2$ graphs (see \cite[Theorem 4.2]{DR}) and to study the combinatorial structure of graphs whose edge ideals have Cohen-Macaulay second symbolic power. Thanks to the one-to-one correspondence between clutters and simplicial complexes, the combinatorial expression of v-numbers of edge ideals given in \cite{DR} can be also rephrased in terms of free faces of a simplicial complex \cite{Civan}.\par 
 On the algebraic side, the first author and Villarreal verify that under suitable assumptions ${\rm v}(I) \leq \reg (S/I)$, where $\reg (S/I)$ is the Castelnuovo-Mumford regularity.  Also, in \cite{Saha-Seng} the authors provide some classes of graphs such that ${\rm v}(I)\leq \IM (G) \leq \reg (S/I)$, where $\IM (G)$ is the induced matching number of $G$ (see Terminology \ref{T}) and $I$ is the edge ideal of  $G$. Thus, in many cases, the ${\rm v}$-number is a lower bound for $\reg (S/I)$. However, when $\car K=0$, there are examples where $\text{reg}(\delta _I)\geq {\rm v}(I)> \reg (S/I)$ \cite{DR}, and the difference ${\rm v}(I)-\reg (S/I)$ could be arbitrarily large \cite{Civan}. These examples also provide a counterexample to a  previous conjecture by Nu\~{n}ez-Betancourt, Pitones and Villarreal \cite[Conjecture 4.2]{NBPV18}. \\

Driven by these combinatorial and algebraic results, in this work we study the v-number of a binomial edge ideal $J_G$ associated to a graph $G$. 
Finding a general combinatorial description of the v-number of binomial edge ideals  turns out to be a more difficult task compared to the monomial case. Nevertheless, it is possible to simplify the problem by proving that the v-number of $J_G$ is additive on the connected components of $G$ (see Proposition \ref{additivity}). This reduction enables us to focus on the case of connected graphs where it is known that $J_{\mathcal{K}_{n}}$ is a minimal prime of $J_{G}$. Hence, by definition, the v-number of $J_G$ is bounded from above by its v-number at $J_{\mathcal{K}_{n}}$.  In Section \ref{v-num-bino-section}, we find a combinatorial description for ${\rm v}_{J_{\mathcal{K}_{n}}}(J_G)$, and so a combinatorial upper bound for ${\rm v}(J_G)$. Specifically, we show that ${\rm v}_{J_{\mathcal{K}_{n}}}(J_G)$ measures the connected domination of the graph. In doing so, we also extend {\rm \cite[Theorem 3.7]{Fatemeh-Leila}}  by Mohammadi and Sharifan (see Proposition \ref{comb-des-colo}).

A \emph{connected dominating set} (CDS) of a connected graph $G$ is a subset $D$ of the vertices of $G$ such that $D$ induces a connected graph $G_{D}$, and every vertex in $V(G) \setminus D$ is adjacent to at least one node in $D$. The minimum of the cardinalities of the connected dominating sets is the \emph{connected domination number}, denoted by $\gamma_c (G)$.

\begin{theoremx}[Theorem \ref{comb_at_comple}]\label{B}
Let $G\neq \mathcal{K}_n$ be a connected graph and let $J_{G}$ be its binomial edge ideal. Then 
 $$
{\rm v}_{J_{\mathcal{K}_{n}}}(J_{G})=\min\lbrace |B|\: |\: B\in\mathcal{D}_{c}(G)\rbrace=\gamma_c (G).$$
where $\mathcal{D}_{c}(G)= \left\lbrace B \subseteq [n] \mid \forall \lbrace i,j\rbrace\not\in E(G), \exists  \text{ a path } P:i,i_{1},\dots,i_{s},j \text{ s.t. }\lbrace i_{1},\dots,i_{s}\rbrace\subseteq B\right\rbrace$.
\end{theoremx}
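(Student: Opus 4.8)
The plan is to split the statement into the algebraic identity ${\rm v}_{J_{\mathcal{K}_{n}}}(J_{G})=\min\{|B|:B\in\mathcal{D}_c(G)\}$ and the purely combinatorial identity $\min\{|B|:B\in\mathcal{D}_c(G)\}=\gamma_c(G)$, and to treat them separately. I will write $f_{ij}=x_iy_j-x_jy_i$. Since $J_G$ is radical it equals the intersection of its minimal primes, and because $G$ is connected $J_{\mathcal{K}_{n}}=P_\emptyset(G)$ is one of them (recalling the description of the minimal primes $P_S(G)$ indexed by the cut sets $S$). For homogeneous $f$ one has $(J_G:f)=\bigcap_{\mathfrak{p}\in\Ass(J_G),\,f\notin\mathfrak{p}}\mathfrak{p}$; as the minimal primes are pairwise incomparable, $(J_G:f)=J_{\mathcal{K}_{n}}$ holds exactly when $f\notin J_{\mathcal{K}_{n}}$ and $f\in\mathfrak{p}$ for every other minimal prime. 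Using that $J_{\mathcal{K}_{n}}=(f_{ij}:1\le i<j\le n)$ is a minimal prime contained in no other, this is equivalent to the two conditions: $f\notin J_{\mathcal{K}_{n}}$, and $f\,f_{ij}\in J_G$ for every non-edge $\{i,j\}\notin E(G)$ (the edges being automatic). Thus ${\rm v}_{J_{\mathcal{K}_{n}}}(J_{G})$ is the least degree of such a \emph{certificate} $f$.

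\emph{Upper bound.} For $B\in\mathcal{D}_c(G)$ I would take $f=x_B:=\prod_{k\in B}x_k$. The Pl\"ucker-type identity $x_k f_{ij}=x_i f_{kj}+x_j f_{ik}$ lets one prove, by induction on the length of a path $i,i_1,\dots,i_s,j$ in $G$, that $(x_{i_1}\cdots x_{i_s})f_{ij}\in J_G$. Since $B\in\mathcal{D}_c(G)$ furnishes for every non-edge $\{i,j\}$ such a path with $\{i_1,\dots,i_s\}\subseteq B$, we get $x_B f_{ij}\in J_G$ for all non-edges, while $x_B\notin J_{\mathcal{K}_{n}}$ because a monomial in the $x$-variables is a nonzero element of the domain $S/J_{\mathcal{K}_{n}}$. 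Hence $x_B$ is a certificate and ${\rm v}_{J_{\mathcal{K}_{n}}}(J_{G})\le\min\{|B|:B\in\mathcal{D}_c(G)\}$; this is the point where the combinatorial description of the colon ideal (Proposition \ref{comb-des-colo}) enters.

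\emph{Lower bound (the main obstacle).} Here I must bound the degree of an arbitrary, not necessarily monomial, certificate. The key observation is that $J_G$, $J_{\mathcal{K}_{n}}$ and each $f_{ij}$ are homogeneous for the fine $\Z^n$-grading $\deg x_k=\deg y_k=e_k$. Splitting a certificate $f$ into its multihomogeneous components, each component $f_\alpha$ again satisfies $f_\alpha f_{ij}\in J_G$, and at least one of them lies outside $J_{\mathcal{K}_{n}}$; so one may assume $f$ is multihomogeneous of multidegree $\alpha$, whence $\deg f=|\alpha|\ge|\Supp(f)|$ with $\Supp(f)=\{k:\alpha_k>0\}$. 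It then suffices to prove $\Supp(f)\in\mathcal{D}_c(G)$. Suppose not: some non-edge $\{i,j\}$ admits no $i$--$j$ path through $B_0:=\Supp(f)$, so $W=[n]\setminus(B_0\cup\{i,j\})$ separates $i$ from $j$. Choosing a minimal separator $S\subseteq W$, which is a valid cut set defining a minimal prime $P_S$ with $i,j$ in distinct components of $G\setminus S$, one checks $f_{ij}\notin P_S$ (its image is a nonzero rank-type element of the tensor product $S/P_S$) while $f\,f_{ij}\in J_G\subseteq P_S$; primeness forces $f\in P_S$, and since $f$ involves no variable of $S$ this puts $f$ in $\sum_\ell J_{\widetilde{G_\ell}}\subseteq J_{\mathcal{K}_{n}}$, contradicting $f\notin J_{\mathcal{K}_{n}}$. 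Hence $\Supp(f)\in\mathcal{D}_c(G)$ and $\deg f\ge|\Supp(f)|\ge\min\{|B|:B\in\mathcal{D}_c(G)\}$. I expect the reduction to the multihomogeneous case and the correct choice of the separating prime $P_S$ to be the delicate points.

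\emph{Combinatorial identity.} Finally, to prove $\min\{|B|:B\in\mathcal{D}_c(G)\}=\gamma_c(G)$: every connected dominating set lies in $\mathcal{D}_c(G)$, since connectivity joins any two $i,j\in D$ inside $D$ and domination routes the endpoints lying outside $D$ into it, which gives $\le$. Conversely, any $B\in\mathcal{D}_c(G)$ induces a connected subgraph, because for $p,q\in B$ the guaranteed path has all of its vertices in $B$; and if $G$ has no universal vertex then every $B\in\mathcal{D}_c(G)$ is additionally dominating (a non-neighbour of an undominated vertex would produce a path whose first internal vertex, lying in $B$, is adjacent to it), hence a connected dominating set, giving $\ge$. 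The case of a universal vertex is immediate, as then both quantities equal $1$.
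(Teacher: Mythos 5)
Your proposal is correct, but the route to the lower bound is genuinely different from the paper's. The paper deduces the whole theorem from Proposition \ref{noembedded} together with an exact computation of the colon ideal, $(J_G:J_{\mathcal{K}_{n}})=J_G+(g_{C,D}\mid C\cup D\in\mathcal{D}_{c}(G),\ C\cap D=\emptyset)$ (Proposition \ref{comb-des-colo}), whose hard inclusion is proved by induction on the number of vertices via the Gr\"obner-basis Lemma \ref{grobner-basis}. You never compute the colon ideal: after reducing to a multihomogeneous certificate $f$ via the fine $\ZZ^n$-grading (which is legitimate, since $J_G$, $J_{\mathcal{K}_{n}}$ and the $f_{ij}$ are all multigraded and all components of a standard-graded $f$ share its degree), you show $\Supp(f)\in\mathcal{D}_{c}(G)$ by a separator argument: a non-edge $\{i,j\}$ with no path through $\Supp(f)$ produces a cut set $W=[n]\setminus(\Supp(f)\cup\{i,j\})$ whose prime $P_{W}(G)\supseteq J_G$ satisfies $f_{ij}\notin P_{W}(G)$, forcing $f\in P_{W}(G)$ and hence, since $f$ avoids the variables indexed by $W$, $f\in\sum_{\ell}J_{\widetilde{G}_{\ell}}\subseteq J_{\mathcal{K}_{n}}$, a contradiction. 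This is shorter and more elementary — it needs only the primary decomposition of Theorem \ref{primary-decomp} and the relation $x_kf_{ij}=x_if_{kj}+x_jf_{ik}$ (essentially the easy half of Theorem \ref{fat-lei}) — but it yields only the degree bound, whereas the paper's argument additionally produces the full generating set of $(J_G:J_{\mathcal{K}_{n}})$, which extends Mohammadi--Sharifan's result and is of independent interest. Your upper bound via the monomial certificates $x_B$ and your identification of $\mathcal{D}_{c}(G)$ with the connected dominating sets (splitting off the universal-vertex case) agree in substance with the paper's.
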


Connected domination of graphs dates back to 1979 when it was formally introduced by  Sampathkumar and Walikar {\rm \cite{Sam-Wal}} as a variation of the concept of domination. Thereafter, it has been extensively studied in the literature also thanks to its numerous applications in network design, and wireless sensor networks. 
In this framework, CDSs play a crucial role in routing computation of Mobile Ad Hoc Networks (MANETs) where a minimum connected dominating set (MCDS) offers a way of sending messages that minimizes the energy consumption and prolongs the lifetime of the network. Likewise, in network design, the connected domination number can be used to determine the minimum number of nodes required to maintain a reliable connection between all the nodes in the network.
The problem of finding a MCDS is proven to be NP-complete, even when instances are restricted to bipartite graphs ({\rm \cite{Garey-Johnson}}, {\rm \cite{Pfaff}}).
For this reason, many algorithms have been developed to construct CDSs and MCDSs ({\rm \cite{guha}, \cite{li2005}, \cite{wan2004}}) and many upper and lower bounds were computed for the connected domination number of an arbitrary graph ({\rm \cite{Bo-Liu}, \cite{Wyatt}, \cite{karami}, \cite{Kosari}, \cite{mafuta}}). However, no closed formula is known to compute $\gamma_c (G)$. Theorem \ref{B} works as a bridge between network/graph theory and combinatorial algebra, as it gives an algebraic interpretation of the connected domination number locally in terms of a v-number. Based on this algebraic description, we provide a \emph{Macaulay2} function (see Code \ref{gammafunction}) to compute $\gamma_c (G)$. Hopefully, Theorem \ref{B} can lead to new improved bounds for the connected domination number.

It is worth noticing that connected domination is closely related to the problem of finding a spanning tree with the largest number of leaves. We recall that a \emph{spanning tree} of a graph $G$ is a subgraph that is a tree and covers all the vertices of $G$.  It is well-known that the connected domination number $\gamma_c (G)$ is the number of non-leaf nodes (i.e., vertices of degree at least $2$) in a spanning tree with the largest number of leaves \cite{caro-west-yuster}. In other words, $$ \gamma_c(G)=n-{\rm lf_{\max}}(G)$$ where ${\rm lf_{\max}}(G)$ is the \emph{maximum leaf number} of $G$. Therefore, as a corollary of the previous theorem, we obtain algebraic upper and lower bounds for $\gamma_c (G)$.

\begin{corollarymx}[Corollary \ref{corollaryMain}]\label{C}
If $G$ is a connected graph, then $${\rm v}(J_{G})\leq {\rm v}_{J_{\mathcal{K}_{n}}}(J_{G})=\gamma_c(G)\leq {\rm reg}(S/T_G),$$
where $T_G$ is a spanning tree of $G$ with the largest number of leaves.
\end{corollarymx}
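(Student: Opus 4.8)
The plan is to verify the displayed chain one relation at a time (assuming $G\neq\mathcal{K}_n$, as in Theorem \ref{B}; the complete graph is immediate on its own). The leftmost inequality ${\rm v}(J_G)\le {\rm v}_{J_{\mathcal{K}_n}}(J_G)$ is nothing but the definition recalled in the introduction: since $G$ is connected, $J_{\mathcal{K}_n}$ is a minimal prime of $J_G$, hence $J_{\mathcal{K}_n}\in\Ass(J_G)$, and ${\rm v}(J_G)=\min\{{\rm v}_{\mathfrak p}(J_G)\mid \mathfrak p\in\Ass(J_G)\}$ is in particular bounded above by the value at $\mathfrak p=J_{\mathcal{K}_n}$. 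The middle equality ${\rm v}_{J_{\mathcal{K}_n}}(J_G)=\gamma_c(G)$ is exactly Theorem \ref{B}, which I take as given. So all the content sits in the rightmost inequality $\gamma_c(G)\le \reg(S/J_{T_G})$.

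To attack it, I would first move everything onto the tree $T_G$. By the identity $\gamma_c(G)=n-{\rm lf}_{\max}(G)$ recalled above, $\gamma_c(G)$ equals the number of non-leaf vertices of $T_G$; and since a tree is its own maximum-leaf spanning tree, that same count equals $\gamma_c(T_G)$, so $\gamma_c(G)=\gamma_c(T_G)$. Applying Theorem \ref{B} now to the connected graph $T_G$ itself (the excluded case $T_G=\mathcal{K}_n$ only arises for $n\le 2$, which is trivial) yields $\gamma_c(T_G)={\rm v}_{J_{\mathcal{K}_n}}(J_{T_G})$. Hence the target reduces to the purely algebraic comparison
$$ {\rm v}_{J_{\mathcal{K}_n}}(J_{T_G})\ \le\ \reg(S/J_{T_G}), $$
a statement about the binomial edge ideal of a single tree.

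The main obstacle is exactly this comparison. The general bound ${\rm v}(I)\le\reg(S/I)$ alluded to in the introduction holds only under extra hypotheses and, more importantly, involves the \emph{global} v-number, not its localization at one prime, so something must be supplied. One route is to show that for a tree the global v-number is attained at $J_{\mathcal{K}_n}$, i.e.\ ${\rm v}(J_T)={\rm v}_{J_{\mathcal{K}_n}}(J_T)=\gamma_c(T)$, and that the v-number/regularity inequality does hold for binomial edge ideals of trees; together these give $\gamma_c(T_G)={\rm v}(J_{T_G})\le\reg(S/J_{T_G})$. A more direct route is to prove the regularity lower bound $\reg(S/J_T)\ge n-{\rm lf}_{\max}(T)$ for every tree $T$ (the small cases I have checked even suggest the exact value $\reg(S/J_T)=n-{\rm lf}_{\max}(T)+1$). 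I expect this lower bound to be the genuinely hard point: it cannot be obtained from the two elementary tools — monotonicity of $\reg(S/J_H)$ under induced subgraphs $H\subseteq T$ and additivity over connected components — because for a starlike tree with many length-two legs the best induced disjoint union of paths already has regularity one below the number of non-leaf vertices. A finer homological input is therefore required, for instance tracking how $\reg$ changes along the Mayer--Vietoris/deletion exact sequences for $J_G$ as a pendant path is attached at the branch vertex.
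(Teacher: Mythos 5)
Your handling of the first two relations is exactly the paper's: ${\rm v}(J_G)\le{\rm v}_{J_{\mathcal{K}_n}}(J_G)$ because $J_{\mathcal{K}_n}$ is a minimal (hence associated) prime of $J_G$ for connected $G$, and the middle equality is Theorem \ref{comb_at_comple} together with $\gamma_c(G)=n-{\rm lf}_{\max}(G)$. Your reduction of the last inequality to the single statement $\#\{\text{internal vertices of }T_G\}\le\reg(S/J_{T_G})$ for the tree $T_G$ is also the correct and intended reduction. Your side remark that the Matsuda--Murai-type lower bounds via induced paths (or induced disjoint unions of paths) cannot give this is accurate -- the spider with $k$ legs of length two is a genuine witness.

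The gap is that you stop there: the inequality $\reg(S/J_T)\ge n-{\rm lf}_{\max}(T)$ for a tree $T$ is the entire content of the corollary beyond Theorem \ref{comb_at_comple}, and you only list two possible strategies without carrying either out. (Your first alternative -- deducing it from ${\rm v}(J_T)\le\reg(S/J_T)$ -- would not work anyway, since that chain would pass through the global v-number, which by Example \ref{ExampleNC} can be strictly smaller than $\gamma_c(T)$; you would still need the localized quantity to bound the regularity from below.) The paper does not prove this inequality either: it closes the argument in one line by invoking \cite[Theorem 4.1]{Jay-Nara}, a known lower bound for the regularity of binomial edge ideals of trees in terms of the number of internal vertices, whose proof proceeds essentially by the mechanism you sketch at the end (inducting on pendant-path attachments at branch vertices via short exact sequences). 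So your proposal identifies the right crux and even the right proof technique for it, but as written it is an incomplete proof; to finish it you must either cite that result or reproduce its induction.
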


In Subsection \ref{sectionclosed}, we use Theorem \ref{B} to investigate how v-numbers of binomial edge ideals of closed graphs behave with respect to Gr\"obner degeneration.
In this case, it is known that binomial edge ideals have a quadratic Gr\"obner basis and their initial ideal can be seen as the edge ideal of a bipartite graph. Thereby, using Gr\"obner degeneration and known results on v-numbers of monomial ideals, we can infer information on v-numbers of binomial edge ideals. Before stating the theorem, we recall below some preliminary notions.\par 
\begin{terminology} \label{T}
Let $G$ be a simple graph with set of vertices $V(G)=[n]:=\{1,2,\dots,n\}$, and set of edges $E(G)$.
\begin{itemize} [itemsep=1.3pt,topsep=2pt,leftmargin=0.3in] 
\item A set $\lbrace e_{1},\dots,e_{r}\rbrace$ of pairwise disjoint edges of $G$ is an \emph{induced matching} if the only edges of $G$ contained in $\bigcup_{i=1}^{r}e_{i}$ are $e_{1},\dots,e_{r}$. The {\it induced matching number} of $G$, denoted by $\IM (G)$, is the number of edges in the largest induced matching.
\item $\ell_G$ denotes the length of the longest induced path of $G$.
\item  A \emph{clique} of a graph is a nonempty subset $C \subseteq V(G)$ such that $C$ induces a complete graph. A \emph{clique cover} of $G$ is a family $\mathcal{C}=\{C_1, \ldots, C_s\}$ of cliques of $G$ such that for every $v\in V(G)$ there exists a clique $C\in \mathcal{C}$ with $v\in C$. The \emph{clique covering  number}  of $G$, denoted by $\theta(G)$, is the minimum number of cliques in $G$ needed to cover the vertex set of $G$, i.e.
$$
\theta(G):=\min\lbrace |\mathcal{C}|\:|\: \mathcal{C}\hbox{ is a clique cover of } G\rbrace.
$$
\end{itemize}
\end{terminology}

\begin{theoremx}[Theorem \ref{relation_initial}]\label{E}
Let $G$ be a connected closed graph. Then 

$$ 
{\rm v} (J_G)<\theta(G)\leq  {\rm v} (\init (J_G)) \leq \ell_G = \reg (S/J_G)=\reg (S/\init(J_G)).
$$
\end{theoremx}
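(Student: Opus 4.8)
The plan is to establish the chain link by link, noting that it consists of two equalities on the right and three inequalities on the left, all bridged to monomial combinatorics through the identity $\init(J_G)=I(H)$: for the closed labelling of $G$ and the lexicographic order $x_1>\cdots>x_n>y_1>\cdots>y_n$ one has $\init(J_G)=(x_iy_j : \{i,j\}\in E(G),\ i<j)$, which is the edge ideal $I(H)$ of the bipartite graph $H$ on $\{x_1,\dots,x_n\}\sqcup\{y_1,\dots,y_n\}$ with $\{x_i,y_j\}\in E(H)$ iff $i<j$ and $\{i,j\}\in E(G)$. The two equalities then come essentially for free: $\reg(S/J_G)=\ell_G$ is the known regularity formula for closed graphs (Ene--Zarojanu), and since $\init(J_G)$ is squarefree, the theorem of Conca--Varbaro on squarefree Gr\"obner degenerations yields that $S/J_G$ and $S/\init(J_G)$ share their extremal Betti numbers, whence $\reg(S/J_G)=\reg(S/\init(J_G))$. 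This reduces the statement to the three inequalities ${\rm v}(J_G)<\theta(G)$, $\theta(G)\le {\rm v}(\init(J_G))$, and ${\rm v}(\init(J_G))\le\ell_G$.

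For the rightmost inequality ${\rm v}(\init(J_G))\le\ell_G$, I would combine $\ell_G=\reg(S/\init(J_G))$ from the previous step with the comparison ${\rm v}(I(H))\le\reg(S/I(H))$ available for edge ideals of this (bipartite) type; alternatively one exhibits directly a witnessing configuration of size $\le\ell_G$ via the combinatorial description of v-numbers of edge ideals from \cite{DR}. For the lower bound $\theta(G)\le {\rm v}(\init(J_G))$, the key idea is to convert a v-achieving configuration of $H$ into a clique cover of $G$. By \cite{DR}, ${\rm v}(I(H))$ admits a description as the minimum size of an independent set $A$ of $H$ whose neighborhood $N_H(A)$ is a minimal vertex cover; given such a minimal $A$, I assign to each $x_i\in A$ the set $\{i\}\cup\{j>i:\{i,j\}\in E(G)\}$ and to each $y_j\in A$ the set $\{j\}\cup\{i<j:\{i,j\}\in E(G)\}$. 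Closedness of $G$ guarantees that each assigned set is a clique, and the vertex-cover condition $N_H(A)=C$ forces them to cover all of $V(G)$; hence they form a clique cover of size at most $|A|$, giving $\theta(G)\le{\rm v}(\init(J_G))$. The step requiring care is checking that the covering condition in $H$ translates exactly into ``every vertex of $G$ lies in one of the assigned cliques.''

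The main obstacle is the strict inequality ${\rm v}(J_G)<\theta(G)$, which cannot follow from Corollary \ref{corollaryMain} alone: already for $G=P_5$ one has $\gamma_c(G)=\theta(G)=3$, so the bound ${\rm v}(J_G)\le\gamma_c(G)$ is not enough, and in particular the optimal prime cannot be $J_{\mathcal{K}_n}$. The plan is therefore to produce, for $G\neq\mathcal{K}_n$, a minimal prime $P_S$ of $J_G$ distinct from $J_{\mathcal{K}_n}$ together with an explicit homogeneous $f$ satisfying $(J_G:f)=P_S$ and $\deg f\le\theta(G)-1$. Concretely, starting from a minimum clique cover $\{C_1,\dots,C_{\theta}\}$ I would select a cut set $S$ compatible with the cover, so that the connected components of $G\setminus S$ are assembled from the cliques $C_i$; the witness $f$ is then built as a product indexed by the cliques, one linear or binomial factor per clique, and the connectivity of $G$ lets two consecutive (overlapping) cliques be handled by a single factor, which is exactly what produces the saving of one in the degree. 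When $G=\mathcal{K}_n$ the inequality is immediate, since $J_{\mathcal{K}_n}$ is prime and ${\rm v}(J_{\mathcal{K}_n})=0<1=\theta(\mathcal{K}_n)$.

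The most delicate point is this last construction: arranging $P_S$ and $f$ so that the colon ideal equals $P_S$ exactly (rather than something larger) while keeping $\deg f$ strictly below $\theta(G)$. Verifying $(J_G:f)=P_S$ will require a careful reduction using a Gr\"obner basis of $J_G$ adapted to the cut set $S$, and isolating the precise degree saving coming from connectivity is the crux of the whole theorem; once the dictionary $\init(J_G)=I(H)$ and the two cited equalities are in place, the remaining links are comparatively routine.
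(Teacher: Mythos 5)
Everything up to the strict inequality tracks the paper's own route: the identification $\init(J_G)=I(H_G)$, the clique cover extracted from a v-witnessing independent set of $H_G$ (your sets $\{i\}\cup\{j>i:\{i,j\}\in E(G)\}$ are exactly the paper's intervals $[a_i,b_i]$, and the covering verification you flag is indeed where the paper does its work), the bound ${\rm v}(I(H_G))\le \IM(H_G)=\ell_G$ via the induced matching coming from a longest induced path, and the two regularity equalities from \cite{Via-Zar} (your Conca--Varbaro detour for $\reg(S/J_G)=\reg(S/\init(J_G))$ is valid but not needed). Those links are sound and essentially identical to Propositions \ref{relat-init} and \ref{init-Induced}.

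The genuine gap is the link you yourself identify as the crux, ${\rm v}(J_G)<\theta(G)$: you give no construction of the prime $P_{\mathcal{S}}$, no formula for the witness $f$, and no verification of $(J_G:f)=P_{\mathcal{S}}$, so this part of the proposal is a plan rather than a proof. Your diagnosis of why the easy route fails is nevertheless correct and significant: the paper does argue exactly through $\gamma_c$ --- it asserts that any clique cover of a connected closed graph by intervals $[a_1,b_1],\dots,[a_s,b_s]$ satisfies $a_{i+1}\le b_i$ and concludes $\gamma_c(G)\le\theta(G)-1$, but the cover $\{1,2\},\{3,4\},\{5\}$ of $P_5$ violates the overlap condition, and $\gamma_c(P_5)=\theta(P_5)=3$ refutes the conclusion. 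Worse, no completion of your sketch is possible, because the strict inequality itself fails in general. For $G=P_4$ one has $\theta(P_4)=2$, the minimal primes are $P_{\emptyset}$, $P_{\{2\}}$, $P_{\{3\}}$, and no linear form $f$ can satisfy $fx_2\in J_{P_4}$ or $fx_3\in J_{P_4}$ (the degree-two piece of $J_{P_4}$ is the span of $f_{12},f_{23},f_{34}$ and contains no nonzero element divisible by $x_2$ or by $x_3$), while ${\rm v}_{P_\emptyset}(J_{P_4})=\gamma_c(P_4)=2$ by Theorem \ref{comb_at_comple}; hence ${\rm v}(J_{P_4})=2=\theta(P_4)$. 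The same occurs for the closed graph in Table \ref{five_vertices} consisting of the triangle $\{3,4,5\}$ with the path $1,2,3$ attached, whose recorded vector $(2,3,3)$ gives ${\rm v}(J_G)=2=\theta(G)$. So the missing step is not merely delicate: at best one can hope for ${\rm v}(J_G)\le\theta(G)$, and the strict version cannot be proved by your construction or by the paper's.
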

We observe that the equality ${\rm reg}(S/J_{G})={\rm reg}(S/{\rm in}_{\prec}(J_{G}))=\ell_G$ follows from {\rm \cite{Via-Zar}}. \par
We also provide an example (see Example \ref{ExampleNC})  of a non-closed graph such that the v-number is not equal to the localization of the v-number at the complete graph, and ${\rm v}(\init (J_G)) > \ell_G$, but still $ {\rm v} (J_G)\leq {\rm v} (\init (J_G))\leq \reg (S/J_G)=\reg (S/\init(J_G)).$ Hence, in this example, ${\rm v}(\init (J_G))$ is a better bound for the regularity than $\ell$.

Lastly, as an application of previous results, in Section \ref{5} we compute the exact value of the v-number for the special family of complete multipartite graphs and we discuss some examples, using \emph{Macaulay2} Code \ref{v-number-procedure} \cite{Mac2}. All these examples suggest that the inequality 
${\rm v} (J_G)\leq  {\rm v} (\init (J_G))$ in Theorem \ref{E} holds for any graph $G$ (see Question \ref{question}).\\

 We would like to point out that some of the results presented in this paper can also be found in \cite{ambhore2023mathrm}. Nevertheless, the techniques and the approaches are significantly different, and the two works were carried out independently.

\section{Preliminaries}\label{preli-section}
In this section we collect some well-known properties and results on binomial edge ideals, edge ideals, and v-numbers, which are relevant for our purposes. The reader who is already familiar with these topics may wish to skip ahead to Section \ref{v-num-bino-section}.

\subsection{Binomial edge ideals}  
Binomial edge ideals were defined by Herzog, Hibi, Hreinsd\'ottir, Kahle and Rauh \cite{HHHKR} and, independently, by Ohtani \cite{Ohtani}, as a way to attach an algebraic structure to a graph. Since their inception, researchers have put a great effort in understanding the interplay between the combinatorial invariants of the graph and the algebraic invariants of its associated binomial edge ideal.
Nowadays, these ideals are considered a standard topic in combinatorial commutative algebra, and they also play a role in the rapidly-growing field of algebraic statistics, where they arise as ideals generated by conditional independence statements (see \cite{HHHKR}). \\

Let $S=K[x_1, \ldots,x_n, y_1, \ldots,y_n]$ be a polynomial ring over a field $K$, and let $G$ be a simple graph (i.e., $G$ is a graph which has neither multiple edges nor loops) on $n$ vertices with set of edges $E(G)$. The \emph{binomial edge ideal} of $G$, denoted by $J_G$, is the following quadratic binomial ideal
\begin{equation*}
J_G:= \left( f_{ij}:=x_i y_j-x_j y_i \mid \{i,j\} \in E(G), i<j\right).
\end{equation*}

In other words, $J_G$ is the ideal generated by the $2$-minors of the generic matrix
$$ X_{n}=
\begin{bmatrix}
    x_1       & x_2 & x_3 & \dots & x_n \\
     y_1       & y_2 & y_3 & \dots & y_n
\end{bmatrix}
$$

whose column indices are given by the edges of $G$. \par 
If we take $G=\mathcal{K}_n$ to be the complete graph on $n$ vertices, then it is clear from the definition that its binomial edge ideal $J_{\mathcal{K}_n}$ is the ideal of $2$-minors of $X_n$. This is why binomial edge ideals can be considered as a generalization of determinantal ideals.\par

In \cite{HHHKR} the authors prove that binomial edge ideals are radical and they give a combinatorial description of their associated primes.  Given a subset $\mathcal{S}\subseteq[n]$, one can define a prime ideal $P_{\mathcal{S}}$ as follows. Set $\mathcal{T}=[n]\setminus \mathcal{S}$, and  let $G_{1},\dots,G_{c(\mathcal{S})}$ be the connected components of $G_{\mathcal{T}}$, where $G_{\mathcal{T}}$ is the induced subgraph of $G$ on $ \mathcal{T}$. For each $G_{i}$, we denote  by $\widetilde{G}_{i}$ the complete graph on $V(G_{i})$ and we set
$$
P_{\mathcal{S}}(G):=\left( \bigcup_{i\in\mathcal{S}}\lbrace x_{i},y_{i}\rbrace, J_{\widetilde{G}_{1}},\dots,J_{\widetilde{G}_{c(\mathcal{S})}}\right). 
$$
This ideal is a prime ideal and all the associated primes of $J_G$ are of this form: 
\begin{theorem}\label{primary-decomp}{\rm \cite[Theorem 3.2]{HHHKR}}
Let $G$ be a simple graph on  $[n]$. Then $J_{G}=\bigcap_{\mathcal{S}\subseteq[n]}P_{\mathcal{S}}(G)$.
\end{theorem}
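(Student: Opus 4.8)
The plan is to prove the two inclusions separately, using radicality of $J_G$ as the bridge. First I would establish the easy inclusion $J_G\subseteq\bigcap_{\mathcal{S}\subseteq[n]}P_{\mathcal{S}}(G)$ by checking that each $P_{\mathcal{S}}(G)$ is a prime ideal containing $J_G$. For primeness, note that after killing the variables $x_i,y_i$ with $i\in\mathcal{S}$ the ring $S/P_{\mathcal{S}}(G)$ is a quotient of a polynomial ring by $J_{\widetilde{G}_1}+\cdots+J_{\widetilde{G}_{c(\mathcal{S})}}$, where each $J_{\widetilde{G}_k}$ is the ideal of maximal minors of a generic $2\times|V(G_k)|$ matrix on a set of variables disjoint from the others. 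Each such determinantal quotient is a (geometrically integral) domain, and since the variable sets of the distinct components are pairwise disjoint the whole ring is again a domain, so $P_{\mathcal{S}}(G)$ is prime. For containment I would argue edge by edge: given $\{i,j\}\in E(G)$, if $i\in\mathcal{S}$ or $j\in\mathcal{S}$ then $f_{ij}=x_iy_j-x_jy_i\in P_{\mathcal{S}}(G)$ because one of the pairs $\{x_i,y_i\}$, $\{x_j,y_j\}$ lies among the generators; otherwise $i,j\in\mathcal{T}$ belong to the same connected component $G_k$ of $G_{\mathcal{T}}$, whence $f_{ij}\in J_{\widetilde{G}_k}\subseteq P_{\mathcal{S}}(G)$.

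The second ingredient is that $J_G$ is radical, and this is where the main technical work lies. I would pass to the lexicographic order induced by $x_1>\cdots>x_n>y_1>\cdots>y_n$ and determine the reduced Gröbner basis of $J_G$, which consists of the binomials attached to the \emph{admissible paths} of $G$. The crucial point is that every leading term is a squarefree monomial, so $\init(J_G)$ is a squarefree monomial ideal and hence radical, and radicality then descends from the initial ideal to $J_G$. I expect the identification of the admissible-path binomials together with the verification, via Buchberger's criterion, that all S-pairs reduce to zero to be the principal obstacle of the argument.

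Finally, using radicality I would write $J_G=\bigcap_{\mathfrak{p}\in\MIN(J_G)}\mathfrak{p}$ and show that every minimal prime has the prescribed form. Given a minimal prime $\mathfrak{p}\supseteq J_G$, set $\mathcal{S}=\{i\in[n]\colon x_i,y_i\in\mathfrak{p}\}$. The variables $x_i,y_i$ with $i\in\mathcal{S}$ lie in $\mathfrak{p}$ by construction, and for two vertices $a,c$ in the same component of $G_{\mathcal{T}}$ I would propagate membership of the $2$-minor $x_ay_c-x_cy_a$ along a connecting path by induction on its length, using the identity
\[
x_b(x_ay_c-x_cy_a)=x_a(x_by_c-x_cy_b)+x_c(x_ay_b-x_by_a),
\]
together with its $y$-analogue $y_b(x_ay_c-x_cy_a)=y_c(x_ay_b-x_by_a)+y_a(x_by_c-x_cy_b)$: since each intermediate vertex $b$ lies in $\mathcal{T}$, not both $x_b$ and $y_b$ belong to $\mathfrak{p}$, so primeness of $\mathfrak{p}$ forces $x_ay_c-x_cy_a\in\mathfrak{p}$. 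This yields $J_{\widetilde{G}_k}\subseteq\mathfrak{p}$ for every component, hence $P_{\mathcal{S}}(G)\subseteq\mathfrak{p}$. As $P_{\mathcal{S}}(G)$ is a prime squeezed between $J_G$ and the minimal prime $\mathfrak{p}$, minimality gives $\mathfrak{p}=P_{\mathcal{S}}(G)$. Combining the three steps, $J_G\subseteq\bigcap_{\mathcal{S}\subseteq[n]}P_{\mathcal{S}}(G)\subseteq\bigcap_{\mathfrak{p}\in\MIN(J_G)}\mathfrak{p}=J_G$, so all inclusions are equalities and the decomposition over every subset $\mathcal{S}\subseteq[n]$ follows, the non-minimal $P_{\mathcal{S}}(G)$ merely sitting above minimal primes without affecting the intersection.
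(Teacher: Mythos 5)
The paper itself states this result as a citation of \cite[Theorem 3.2]{HHHKR} and gives no proof, and your argument is essentially the original one from that source: each $P_{\mathcal{S}}(G)$ is prime (disjoint variable sets, geometrically integral determinantal factors) and contains $J_G$ by the edge-by-edge check; $J_G$ is radical because the admissible-path Gr\"obner basis has squarefree initial terms; and every minimal prime equals some $P_{\mathcal{S}}(G)$ via the exchange identity $x_b(x_ay_c-x_cy_a)=x_a(x_by_c-x_cy_b)+x_c(x_ay_b-x_by_a)$ propagated along paths in $G_{\mathcal{T}}$, with primeness absorbing the unit-like variable $x_b$ or $y_b$. The one step you defer---the Buchberger verification for the admissible-path basis---is exactly \cite[Theorem 2.1]{HHHKR}, which you correctly identify as the technical core, so the proposal is sound and follows the same route as the cited proof.
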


\begin{remark}\label{Knprime} Since $P_{\emptyset}(G)$ does not contain monomials, it follows  that $P_{\mathcal{S}}(G)\not\subseteq P_{\emptyset}(G) $ for every $\mathcal{S}$. Thus, by Theorem \ref{primary-decomp},  $P_{\emptyset}(G)$ is a minimal prime of $J_{G}$. In particular, when $G$ is connected, we have that $J_{\mathcal{K}_n}$ is a minimal prime.
\end{remark}

\begin{definition} A vertex $i$ of $G$ is a {\it cut-point} of $G$ if $G$ has less connected components than $G_{[n]\setminus \lbrace i\rbrace}$.
\end{definition}
As the following result shows, among all $P_{\mathcal{S}}(G)$'s, the minimal primes of $J_G$ are those corresponding to particular subsets of the vertices. 
\begin{theorem}{\rm \cite[Corollary 3.9]{HHHKR}}\label{minimal_primes}
Let $G$ be a connected graph on $[n]$, and $\mathcal{S}\subseteq[n]$. Then $P_{\mathcal{S}}(G)$ is a minimal prime ideal of $J_{G}$ if and only if $\mathcal{S}=\emptyset$, or $\mathcal{S}\neq\emptyset$ and each $i\in\mathcal{S}$ is a cut point of the graph $G_{([n]\setminus\mathcal{S})\cup\lbrace i\rbrace}$.
\end{theorem}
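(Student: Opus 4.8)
The plan is to work directly with the decomposition $J_G=\bigcap_{\mathcal{S}\subseteq[n]}P_{\mathcal{S}}(G)$ from Theorem \ref{primary-decomp}. Since each $P_{\mathcal{S}}(G)$ is prime, contains $J_G$, and the intersection of all of them is $J_G$, the minimal primes of $J_G$ are exactly the inclusion-minimal members of the family $\{P_{\mathcal{S}}(G)\mid \mathcal{S}\subseteq[n]\}$. Thus the statement reduces to characterizing combinatorially when $P_{\mathcal{S}}(G)$ is minimal in this family. The case $\mathcal{S}=\emptyset$ is already handled by Remark \ref{Knprime}, so I focus on $\mathcal{S}\neq\emptyset$. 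Throughout I write $\mathcal{T}=[n]\setminus\mathcal{S}$.

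The technical heart is a membership criterion for the binomial generators $f_{ab}=x_ay_b-x_by_a$: for $a,b\in\mathcal{T}$, one has $f_{ab}\in P_{\mathcal{S}}(G)$ if and only if $a$ and $b$ lie in the same connected component of $G_{\mathcal{T}}$. The ``if'' direction is immediate from the definition of $P_{\mathcal{S}}(G)$. For the ``only if'', whenever $a,b$ lie in distinct components $G_1,G_2$ I would exhibit a $K$-algebra homomorphism $\varphi\colon S\to K$ annihilating $P_{\mathcal{S}}(G)$ but not $f_{ab}$: send all variables indexed by $\mathcal{S}$ to $0$; on $V(G_1)$ set the bottom row of $X_n$ to zero and $x_a\mapsto 1$; on $V(G_2)$ set the top row to zero and $y_b\mapsto 1$; and send every remaining variable to $0$. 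Within each component the two rows of $X_n$ become proportional, so $\varphi$ kills every $J_{\widetilde{G}_i}$ and hence $P_{\mathcal{S}}(G)$, while $\varphi(f_{ab})=1$. As $P_{\mathcal{S}}(G)$ is prime, this yields $f_{ab}\notin P_{\mathcal{S}}(G)$, and the argument needs no algebraic closure.

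Next I would pin down containments. Comparing degree-one parts shows that $P_{\mathcal{S}'}(G)\subseteq P_{\mathcal{S}}(G)$ forces $\mathcal{S}'\subseteq\mathcal{S}$, since the linear generators of $P_{\mathcal{S}'}(G)$ are precisely $\{x_j,y_j\mid j\in\mathcal{S}'\}$ and the quadrics contribute nothing in degree one. I would also record the combinatorial meaning of the hypothesis: removing $i$ from $G_{\mathcal{T}\cup\{i\}}$ leaves $G_{\mathcal{T}}$, so $i$ is a cut point of $G_{([n]\setminus\mathcal{S})\cup\{i\}}$ exactly when $i$ is adjacent to at least two distinct components of $G_{\mathcal{T}}$. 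The forward implication then follows by contraposition: if some $i\in\mathcal{S}$ is not a cut point, it meets at most one component $G_r$ of $G_{\mathcal{T}}$, and passing from $\mathcal{S}$ to $\mathcal{S}\setminus\{i\}$ only enlarges $G_r$ by the single vertex $i$. Each extra minor $f_{ij}$ with $j\in V(G_r)$ already lies in $P_{\mathcal{S}}(G)$ because $x_i,y_i\in P_{\mathcal{S}}(G)$, so $P_{\mathcal{S}\setminus\{i\}}(G)\subseteq P_{\mathcal{S}}(G)$; the containment is strict since $x_i\in P_{\mathcal{S}}(G)\setminus P_{\mathcal{S}\setminus\{i\}}(G)$. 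Hence $P_{\mathcal{S}}(G)$ is not minimal.

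For the converse, assume every $i\in\mathcal{S}$ is a cut point and suppose toward a contradiction that $P_{\mathcal{S}'}(G)\subsetneq P_{\mathcal{S}}(G)$ for some $\mathcal{S}'\subsetneq\mathcal{S}$. Picking $i\in\mathcal{S}\setminus\mathcal{S}'$, the cut-point hypothesis furnishes neighbours $a,b$ of $i$ in two different components of $G_{\mathcal{T}}$, with $a,b\in\mathcal{T}\subseteq[n]\setminus\mathcal{S}'$ and $i\in[n]\setminus\mathcal{S}'$. The path $a,i,b$ lies in $G_{[n]\setminus\mathcal{S}'}$, so $a,b$ share a component there and $f_{ab}\in P_{\mathcal{S}'}(G)\subseteq P_{\mathcal{S}}(G)$; yet $a,b$ lie in distinct components of $G_{\mathcal{T}}$, so the membership criterion forces $f_{ab}\notin P_{\mathcal{S}}(G)$, a contradiction. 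Therefore $P_{\mathcal{S}}(G)$ is minimal, completing both directions. The single genuinely nontrivial point is the membership criterion of the second paragraph: the remaining steps are bookkeeping with the explicit generators, whereas separating $f_{ab}$ from the prime requires producing the homomorphism $\varphi$ (equivalently, a suitable point of the variety). Once that is established, both implications reduce to short combinatorial arguments.
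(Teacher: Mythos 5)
Your proposal is correct, but be aware that the paper contains no proof of this statement to compare against: it is quoted verbatim from \cite[Corollary 3.9]{HHHKR}, and the present authors only use it as a black box. Judged on its own, your argument is a sound, self-contained derivation from Theorem \ref{primary-decomp}: since the minimal primes of a finite intersection of primes containing $J_G$ are exactly the inclusion-minimal members of the family $\{P_{\mathcal{S}}(G)\}$, everything reduces to deciding when $P_{\mathcal{S}'}(G)\subsetneq P_{\mathcal{S}}(G)$, and your two tools do this correctly: the degree-one comparison (valid because the only linear generators are the variables indexed by $\mathcal{S}'$) forces $\mathcal{S}'\subseteq\mathcal{S}$, and the membership criterion for $f_{ab}$ is correctly established by your evaluation map $\varphi$, which kills all generators of $P_{\mathcal{S}}(G)$ while $\varphi(f_{ab})=1$; note that primeness is not even needed there, since $f_{ab}\notin\ker\varphi\supseteq P_{\mathcal{S}}(G)$ already suffices. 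Your reformulation of the cut-point condition (fewer components after adding $i$ if and only if $i$ is adjacent to at least two components of $G_{\mathcal{T}}$) is also right, and both implications then go through as you describe. This matches the general shape of the original argument in \cite{HHHKR}, which likewise combines the decomposition with a containment analysis among the ideals $P_{\mathcal{S}}(G)$; your point-evaluation homomorphism substitutes for their containment lemma and gives a short, characteristic-free separation. One small gloss to repair: in the contrapositive step, if $i$ has no neighbour at all in $\mathcal{T}=[n]\setminus\mathcal{S}$, then nothing is ``enlarged''; instead $\{i\}$ becomes an isolated component of $G_{\mathcal{T}\cup\{i\}}$ contributing no minors, and the strict inclusion $P_{\mathcal{S}\setminus\{i\}}(G)\subsetneq P_{\mathcal{S}}(G)$ (strict by your degree-one argument, as $x_i$ lies only in the larger ideal) holds even more easily, so the case costs you one sentence, not the conclusion.
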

In case the graph is not connected, the minimal primes of its binomial edge ideal are given  by the sums of the minimal primes of the binomial edge ideals of its connected components.
\begin{proposition}{\rm \cite[Problem 7.8]{HHO}}\label{minimalpri}
Let $G$ be a graph on $[n]$ with connected components $G_{1},\dots,G_{r}$, and let $T\subset[n]$. Set $T_{i}=T\cap V(G_{i})$ for $i=1,\dots,r$. Then 
\begin{itemize}
\item[(a)] $P_{T}(G)=\sum_{i=1}^{r}P_{T_{i}}(G_{i})S$
\item[(b)] $P_{T}(G)$ is a minimal prime ideal of $J_{G}$ if and only if each $P_{T_{i}}(G_{i})$ is a minimal prime ideal of $J_{G_{i}}$.
\end{itemize}
\end{proposition}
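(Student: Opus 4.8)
The plan is to read (a) as a bookkeeping statement about generators and then deduce (b) by transporting inclusions among the primes $P_{\mathcal{S}}(G)$ to their components by contraction. For (a), I would unwind the definition of $P_T(G)$ with $\mathcal{S}=T$ and $\mathcal{T}=[n]\setminus T$. The one thing to notice is that, since $G$ is the disjoint union of $G_1,\dots,G_r$ on disjoint vertex sets, the induced subgraph $G_{\mathcal{T}}$ is the disjoint union of the $(G_i)_{V(G_i)\setminus T_i}$, so the connected components of $G_{\mathcal{T}}$ are exactly those of the individual $(G_i)_{V(G_i)\setminus T_i}$; hence the binomial generators $J_{\widetilde{H}}$ of $P_T(G)$ are precisely the ones contributed by the $P_{T_i}(G_i)$. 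As the monomial part $\bigcup_{j\in T}\{x_j,y_j\}$ splits as $\bigsqcup_i\bigcup_{j\in T_i}\{x_j,y_j\}$, collecting generators gives $P_T(G)=\sum_{i=1}^{r}P_{T_i}(G_i)S$, which is (a). This step is routine.

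For (b), the key input is Theorem \ref{primary-decomp}: writing $J_G=\bigcap_{\mathcal{S}\subseteq[n]}P_{\mathcal{S}}(G)$ as a finite intersection of the primes $P_{\mathcal{S}}(G)$, the minimal primes of $J_G$ are exactly the inclusion-minimal elements of the set $\{P_{\mathcal{S}}(G)\mid\mathcal{S}\subseteq[n]\}$, and likewise for each connected $G_i$. Put $S_i=K[x_j,y_j\mid j\in V(G_i)]$. By (a), every $P_{\mathcal{S}}(G)$ equals $\sum_i P_{\mathcal{S}_i}(G_i)S$ with $\mathcal{S}_i=\mathcal{S}\cap V(G_i)$, each summand supported on the disjoint variable subring $S_i$. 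Since $S/(\sum_i P_{\mathcal{S}_i}(G_i)S)$ is the tensor product over $K$ of the quotients $S_i/P_{\mathcal{S}_i}(G_i)$, and the structure map of each factor into this tensor product is injective (tensoring over a field with nonzero algebras is faithful), contraction yields $P_{\mathcal{S}}(G)\cap S_i=P_{\mathcal{S}_i}(G_i)$. This contraction identity is the workhorse: it gives the compatibility that $P_{\mathcal{S}}(G)\subseteq P_{\mathcal{S}'}(G)$ holds if and only if $P_{\mathcal{S}_i}(G_i)\subseteq P_{\mathcal{S}'_i}(G_i)$ for every $i$, the forward direction by contraction and the reverse because containments in disjoint variables are checked component-wise.

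The two implications of (b) then follow by varying one component at a time. If every $P_{T_i}(G_i)$ is minimal over $J_{G_i}$ and $P_{\mathcal{S}'}(G)\subseteq P_T(G)$, then each $P_{\mathcal{S}'_i}(G_i)\subseteq P_{T_i}(G_i)$ is a prime over $J_{G_i}$ and so equals $P_{T_i}(G_i)$ by minimality; summing gives $P_{\mathcal{S}'}(G)=P_T(G)$, so $P_T(G)$ is minimal. Conversely, if $P_T(G)$ is minimal while some $P_{T_{i_0}}(G_{i_0})$ is not, I would replace only the $i_0$-th component of $T$ by a strictly smaller prime over $J_{G_{i_0}}$ (which exists by non-minimality), producing a prime $P_{\mathcal{S}'}(G)\subsetneq P_T(G)$ that still contains $J_G$ and contradicting minimality. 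The point I expect to demand the most care is the soft commutative algebra tying this together: justifying that the minimal primes of the radical ideal $\bigcap_{\mathcal{S}}P_{\mathcal{S}}(G)$ are precisely the inclusion-minimal $P_{\mathcal{S}}(G)$, and establishing the contraction identity $P_{\mathcal{S}}(G)\cap S_i=P_{\mathcal{S}_i}(G_i)$. Once these are in place the rest is formal, and in particular no delicate statement about primeness of tensor products of the component domains is needed.
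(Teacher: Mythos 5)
The paper does not prove this proposition at all: it is quoted verbatim from \cite[Problem 7.8]{HHO} as a known result (posed there as an exercise), so there is no in-paper argument to compare against. Judged on its own, your proof is correct and complete. Part (a) is indeed pure bookkeeping: $G_{[n]\setminus T}$ is the disjoint union of the $(G_i)_{V(G_i)\setminus T_i}$, so the cliques and the monomial generators of $P_T(G)$ are exactly those contributed by the $P_{T_i}(G_i)$. For (b), the two ingredients you flag as needing care both hold: the minimal primes of a finite intersection of primes are the inclusion-minimal ones (any prime containing $\bigcap_j \mathfrak{p}_j$ contains some $\mathfrak{p}_j$, since it contains their product), and the contraction identity $P_{\mathcal{S}}(G)\cap S_i = P_{\mathcal{S}_i}(G_i)$ follows from faithfulness of $-\otimes_K(\text{nonzero algebra})$ applied to $S/P_{\mathcal{S}}(G)\cong\bigotimes_i S_i/P_{\mathcal{S}_i}(G_i)$, using that each $P_{\mathcal{S}_i}(G_i)$ is proper. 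With those in place, the equivalence ``$P_{\mathcal{S}}(G)\subseteq P_{\mathcal{S}'}(G)$ iff componentwise containment'' and the one-component-at-a-time replacement argument for the converse direction of (b) are exactly the right moves; in the converse you should (as you implicitly do) take the strictly smaller prime over $J_{G_{i_0}}$ to be a minimal prime, so that it is again of the form $P_{\mathcal{S}'_{i_0}}(G_{i_0})$ and the replacement stays within the family of ideals $P_{\mathcal{S}}(G)$.
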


%

In this work we will be interested in studying closed graphs. This class of graphs has been algebraically characterized in terms of Gr\"obner bases of their binomial edge ideals. However, it is worth mentioning that they were already widely studied in combinatorics, where they are known as \emph{unit-interval graphs}. 

\begin{definition}\label{closed-graph}{\rm \cite[Theorem 7.2]{HHO}}
Let $G$ be a simple graph on $[n]$, and let $\prec$ be the lexicographic order on $S$ induced by $x_{1}\succ\dots\succ x_{n}\succ y_{1}\succ\dots\succ y_{n}$. The followings are equivalent:
\begin{itemize}
\item[(a)] The natural generators $f_{ij}$ of $J_{G}$ form a quadratic Gr\"obner basis;

\item[(b)] For all integers $1\leq i<j<k\leq n$, if $\lbrace i, k\rbrace \in E(G)$ then $\lbrace i, j\rbrace\in E(G)$ and $\lbrace j, k\rbrace\in E(G)$.
\end{itemize}
A graph $G$ is \emph{closed} if there exists a labelling of its vertices such that one of the previous equivalent conditions is satisfied. 
\end{definition}

We point out that both the conditions in Definition \ref{closed-graph} do not only depend on the isomorphism type of the graph, but also on the labelling of its vertices. When we say that a graph $G$ is closed, we assume that $G$ is closed with respect to the given labelling of its vertices.\par 

Among all the algebraic invariants of binomial edge ideals, Castelnuovo-Mumford regularity has been intensively investigated and several conjectures have been settled (see e.g., {\rm \cite{Via-Zar}, \cite{Jay-Nara}, \cite{Kiani-Mada}, \cite{Kiani-Mada2}, \cite{Mat-Mur},\cite{ Mala-Mada}, \cite{Sche-Zafar}}).\par 
 Below we recall the definition of Castelnuovo-Mumford regularity and the main known results on the regularity of binomial edge ideals.
\begin{definition}{\rm \cite{Eise-Sys}}
Let $I\subset S$ be a graded ideal, and let ${\bf F}$ be the minimal  graded resolution  of $S/I$  as an $S$-module: 
$$
\xymatrix{
{\bf F}:0\ar[r] &\bigoplus_{j}S(-j)^{b_{g,j}}\ar[r] &\cdots\ar[r] &\bigoplus_{j}S(-j)^{b_{1,j}}\ar[r] &S\ar[r] &S/I
},
$$  
The Castelnuovo-Mumford  regularity  of $S/I$ is defined as:
$$
{\rm reg}(S/I):=\max\lbrace j-i\:|\: b_{i,j}\neq 0\rbrace.
$$
\end{definition}

\begin{remark}\label{addi_reg}
Let $G$ be  a graph  with connected components $G_{1},\dots, G_{r}$ and set $S_{i}=K[\lbrace x_{j}, y_{j}\rbrace_{j\in V(G_{i})}]$. Then 
$$
{\rm reg}(S/J_{G})=\sum_{i=1}^{r}{\rm reg}(S_{i}/J_{G_{i}}).
$$
\end{remark}

\begin{proposition}\cite[Theorem 1.1]{Mat-Mur},\cite[Theorem 3.2]{Via-Zar}\label{regBEI}
Let $G$ be a simple graph on $[n]$ and $\ell$ be the length of its longest induced path. Then
$$\ell +1 \leq \reg (J_G) \leq n.$$
In addition, if $G$ is closed, $\reg (J_G)=\ell +1.$
\end{proposition}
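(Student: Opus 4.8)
The plan is to prove the three assertions separately after the standard reduction $\reg(J_G)=\reg(S/J_G)+1$, which recasts the claim as $\ell\le\reg(S/J_G)\le n-1$, with equality $\reg(S/J_G)=\ell$ when $G$ is closed (here $\ell$ is the length of the longest induced path). I would organize the argument around three inputs: the behaviour of regularity under Gr\"obner degeneration, its monotonicity under induced subgraphs, and an induction on the number of vertices.

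I would treat the closed case first, since it also supplies the base computation for the lower bound. If $G$ is closed, then by Definition \ref{closed-graph} the generators $f_{ij}$ form a quadratic Gr\"obner basis, so $\init(J_G)=(x_iy_j:\{i,j\}\in E(G),\,i<j)$ is a \emph{squarefree} monomial ideal, namely the edge ideal of a bipartite graph on $\{x_1,\dots,x_n\}\sqcup\{y_1,\dots,y_n\}$. Because a squarefree initial ideal preserves Castelnuovo--Mumford regularity (Conca--Varbaro), one obtains $\reg(S/J_G)=\reg(S/\init(J_G))$, and the regularity of this monomial edge ideal can be evaluated combinatorially and equals $\ell$; this is precisely the content of \cite{Via-Zar}. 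In particular, since every path $P_m$ is closed with longest induced path of length $m-1$, this yields $\reg(S/J_{P_m})=m-1$, the computation I reuse below.

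For the lower bound $\reg(S/J_G)\ge\ell$ for arbitrary $G$, I would invoke the monotonicity of regularity under induced subgraphs: if $H$ is an induced subgraph of $G$ then $\reg(S_H/J_H)\le\reg(S/J_G)$ \cite{Mat-Mur}. The cleanest route to this monotonicity is to reduce to a single vertex deletion $H=G-v$, using the identification $S/(J_G+(x_v,y_v))\cong S_H/J_H$, and to extract the inequality from the short exact sequences relating $S/J_G$, $S/(J_G:x_v)$ and $S/(J_G,x_v)$ (and symmetrically for $y_v$). Applying monotonicity to a longest induced path $P=P_{\ell+1}$ together with $\reg(S_P/J_P)=\ell$ from the previous paragraph gives $\reg(S/J_G)\ge\ell$.

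For the upper bound $\reg(S/J_G)\le n-1$ I would argue by induction on $n=|V(G)|$. If $G$ has an isolated vertex $v$, then $J_G=J_{G-v}S$ and the inductive hypothesis gives $\reg(S/J_G)=\reg(S_{G-v}/J_{G-v})\le(n-1)-1$. Otherwise I fix a vertex $v$ and run the induction through
$$0\to \bigl(S/(J_G:x_v)\bigr)(-1)\xrightarrow{\ \cdot x_v\ } S/J_G\to S/(J_G,x_v)\to 0,$$
together with the standard estimate $\reg(S/J_G)\le\max\{\reg(S/(J_G:x_v))+1,\ \reg(S/(J_G,x_v))\}$. Here $S/(J_G,x_v)$ degenerates modulo $x_v$ into a binomial edge ideal on $G-v$ together with the monomials $x_jy_v$ over neighbours $j$ of $v$, while the colon $(J_G:x_v)$ is governed by the straightening relations $x_kf_{ij}=x_if_{kj}+x_jf_{ik}$ (so that passing to the colon by $x_v$ effectively completes paths through $v$). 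The main obstacle is exactly this bookkeeping: identifying $(J_G:x_v)$ and $(J_G,x_v)$ with binomial edge ideals of explicit smaller or auxiliary graphs so that the inductive hypothesis applies with the correct numerology, which is the technical heart of \cite{Mat-Mur}. By contrast, the lower bound is routine once monotonicity is established, and in the closed case the only subtle points are the preservation of regularity under the squarefree degeneration and the combinatorial evaluation $\reg(S/\init(J_G))=\ell$, both packaged in \cite{Via-Zar}.
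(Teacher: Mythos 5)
The paper does not prove this proposition at all: it is imported verbatim from \cite{Mat-Mur} (the two bounds) and \cite{Via-Zar} (the equality for closed graphs), so there is no internal argument to compare against. Judged on its own terms, your outline correctly identifies where each piece lives and your treatment of the closed case is sound: upper semicontinuity gives $\reg(S/J_G)\le\reg(S/\init(J_G))$, the initial ideal is the edge ideal of the bipartite graph $H_G$ with $\reg(S/I(H_G))=\IM(H_G)=\ell$, and combining this with the lower bound yields the equality (you do not even need the Conca--Varbaro squarefree-degeneration theorem, which in any case postdates \cite{Via-Zar}). The upper bound $\reg(S/J_G)\le n-1$ is likewise an induction of the shape you describe, and you are candid that identifying $(J_G:x_v)$ and $(J_G,x_v)$ with ideals of smaller graphs is the technical content of \cite{Mat-Mur}.

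The one step I would push back on is your proposed derivation of the monotonicity $\reg(S_H/J_H)\le\reg(S/J_G)$ for induced subgraphs from the short exact sequences
$0\to (S/(J_G:x_v))(-1)\to S/J_G\to S/(J_G,x_v)\to 0$. Those sequences only give
$\reg(S/(J_G,x_v))\le\max\{\reg(S/J_G),\,\reg(S/(J_G:x_v))+1\}$, and the colon term is not a priori controlled by $\reg(S/J_G)$; in general, killing linear forms can raise regularity, so this route does not close. The actual argument in \cite{Mat-Mur} is different in kind: since $H$ is \emph{induced}, the projection $S\to S_H$ sending $x_i,y_i\mapsto 0$ for $i\notin V(H)$ carries $J_G$ onto $J_H$, so $S_H/J_H$ is an algebra retract of $S/J_G$ and its graded Betti numbers are bounded by those of $S/J_G$; monotonicity of regularity is then immediate, and applying it to a longest induced path (whose binomial edge ideal is a complete intersection of $\ell$ quadrics, hence has regularity $\ell$) gives the lower bound. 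If you replace your exact-sequence sketch by the retract argument, the proposal becomes a faithful roadmap of the cited proofs.
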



\subsection{Edge ideals and v-numbers}
Given a simple graph $G$ with vertices $V(G)=[n]$, the \emph{edge ideal of } $G$ is a squarefree monomial ideal defined by 
$$
I(G):=\lbrace t_{i}t_{j}\:|\: \lbrace i,j\rbrace\in E(G)\rbrace\subseteq K[t_{1},\dots,t_{n}].
$$

This ideal was introduced in {\rm \cite{RCOMGR}}, and has been studied in the literature from different perspectives; see {\rm \cite{CHHVTuyl,FASKTN,HZXZ,SIMVASVI,mon-alg}} and the references therein.

\begin{remark} By Definition \ref{closed-graph}, if $G$ is a closed graph then the initial ideal of $J_G$ (with respect to the lexicographic order) is given by $\init(J_G)=\{x_i y_j \mid \{i,j\} \in E(G), i<j\}$. Hence, $\init(J_G)$ can be seen as the edge ideal of a bipartite graph $H_G$, as shown in the next example. We will call $H_G$ the \emph{initial graph} of $G$. Moreover, the length of the longest induced path of $G$ is known to be equal to the induced matching number of $H_G$ (see \cite[Proposition 7.32]{HHO}).
\end{remark}

\begin{example}\label{close_graph_corre}

Let $G$ be the following closed graph on $6$ vertices.

\begin{figure}[H]
\centering
\begin{tikzpicture}[line width=.5pt,scale=0.75]
		\tikzstyle{every node}=[inner sep=1pt, minimum width=5.5pt] 
\tiny{
\node (1) at (.5,1){$\bullet$};
\node (2) at (1,0) {$\bullet$};
\node (3) at (.5,-1) {$\bullet$};
\node (4) at (-.5,-1) {$\bullet$};
\node (5) at (-1,0){$\bullet$};
\node (6) at (-.5,1) {$\bullet$};
\node at (.7,1){$1$};
\node at (1.2,0) {$2$};
\node at  (.5,-1.3){$3$};
\node at (-.5,-1.3){$4$};
\node at (-1.2,0){$5$};
\node at (-.7,1) {$6$};
\draw[-,line width=1pt] (1) to (2);
\draw[-,line width=1pt] (2) to (3);
\draw[-,line width=1pt] (3) -- (4);
\draw[-,line width=1pt] (4) -- (5);
\draw[-,line width=1pt] (5) -- (6);
\draw[-,line width=1pt] (4) -- (5);
\draw[-,line width=1pt] (5) -- (6);
\draw[-,line width=1pt] (1) -- (3);
\draw[-,line width=1pt] (4) -- (6);
}
\end{tikzpicture}
\caption{}\label{correspondance}
\end{figure}

Then, the initial ideal of $J_{G}$ is given by 
$
\init(J_G)=\{x_1 y_2,x_2 y_3,x_3 y_4,x_4 y_5,x_5 y_6,x_1 y_3,x_4 y_6\}
$
and the {\it initial graph} $H_G$ is the following bipartite graph
\begin{figure}[H]
\centering
\begin{tikzpicture}[line width=.5pt,scale=0.75]
		\tikzstyle{every node}=[inner sep=1pt, minimum width=5.5pt] 
\tiny{
\node (1) at (-2.5,1){$\bullet$};
\node (2) at (-1.5,1) {$\bullet$};
\node (3) at (-.5,1) {$\bullet$};
\node (4) at (.5,1) {$\bullet$};
\node (5) at (1.5,1){$\bullet$};
\node (6) at (2.5,1) {$\bullet$};
\node (7) at (-2.5,-1){$\bullet$};
\node (8) at (-1.5,-1) {$\bullet$};
\node (9) at (-.5,-1) {$\bullet$};
\node (10) at (.5,-1) {$\bullet$};
\node (11) at (1.5,-1){$\bullet$};
\node (12) at (2.5,-1) {$\bullet$};
\node at (-2.5,1.3){$x_1$};
\node at (-1.5,1.3) {$x_2$};
\node at  (-.5,1.3){$x_3$};
\node at (.5,1.3){$x_4$};
\node at (1.5,1.3){$x_5$};
\node at (2.5,1.3) {$x_6$};
\node at (-2.5,-1.3){$y_1$};
\node at (-1.5,-1.3) {$y_2$};
\node at  (-.5,-1.3){$y_3$};
\node at (.5,-1.3){$y_4$};
\node at (1.5,-1.3){$y_5$};
\node at (2.5,-1.3) {$y_6$};
\draw[-,line width=1pt] (1) to (8);
\draw[-,line width=1pt] (2) to (9);
\draw[-,line width=1pt] (3) -- (10);
\draw[-,line width=1pt] (4) -- (11);
\draw[-,line width=1pt] (5) -- (12);
\draw[-,line width=1pt] (1) -- (9);
\draw[-,line width=1pt] (4) -- (12);
}
\end{tikzpicture}
\caption{}\label{correspondance_bipar}
\end{figure}

A longest induced path in $G$ is given by $1,3,4,6$ and the corresponding induced matching in $H_G$ is $\{x_1,y_3\},\{x_3,y_4\},\{x_4,y_6\}$. So, $\ell_G=\IM(H_G)=3$. 
\end{example}

In \cite{DR}, the first author and Villarreal investigate the v-number of edge ideals of clutters and find a combinatorial description for it in terms of particular stable sets.
Before stating the result, we recall some preliminary notions and notations.
\begin{terminology}Let $G$ be a simple graph.
\begin{itemize} [itemsep=1.3pt,topsep=2pt,leftmargin=0.3in] 
\item A subset $\mathcal{V}\subseteq V(G)$ is a {\it vertex cover} if every edge of $G$ is incident with at least one vertex in $\mathcal{V}$. A vertex cover $\mathcal{V}$ is {\it minimal} if each proper subset of $\mathcal{V}$ is not  a vertex cover of $G$. 
\item A subset $\mathcal{V}\subseteq V(G)$ is called {\it stable} (or {\it independent}) if no two vertices in $\mathcal{V}$ are joined by an edge of $G$. Note that a set of vertices $\mathcal{V}$ is a (maximal) stable set if and only if $V(G)\setminus \mathcal{V}$ is a (minimal) vertex cover.
\item The neighbourhood of a vertex $v \in V(G)$ is the set $N_G(v):=\{i\in V(G) \mid \{v,i\} \in E(G)\}$. Analogously, if $\mathcal{V} \subseteq V(G)$, then $N_{G}(\mathcal{V}):= \{k\in V(G) \mid \{i,k\} \in E(G) \text{ for some } i \in \mathcal{V}\}$.
\end{itemize}
\end{terminology} 

\begin{theorem}{\rm \cite[Theorem 3.5]{DR}}\label{v-number-clutters-graphs}
Let $G$ be a connected graph.
If $I(G)$ is not prime, then  
$$\mathrm{v}(I(G))=\min\{|A|\colon 
A\in\mathcal{A}_{G}\},
$$
where $\mathcal{A}_{G}$ denotes the family of all stable sets $A$ of $G$ whose set of neighbours $N_{G}(A)$ is a minimal vertex cover of $G$. 
\end{theorem}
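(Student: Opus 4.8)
The plan is to exploit that $I(G)$ is a squarefree monomial ideal, hence radical, so its associated primes are exactly its minimal primes, namely the ideals $\mathfrak{p}_C:=(t_i\mid i\in C)$ as $C$ ranges over the minimal vertex covers of $G$. Thus computing ${\rm v}(I(G))$ amounts to finding, for some minimal vertex cover $C$, a minimal-degree $f$ with $(I(G):f)=\mathfrak{p}_C$. The first and technically most delicate step is to reduce to the case where $f$ is a squarefree monomial. For this I would use the multigrading: writing $\bar f\in S/I(G)$ in the $K$-basis of standard monomials as $f=\sum_\gamma c_\gamma t^\gamma$, the equivalence $t^\delta f\in I(G)\iff t^\delta t^\gamma\in I(G)$ for every $\gamma$ (valid because the monomials $t^\delta t^\gamma$ sit in pairwise distinct multidegrees) shows that the monomial part of $(I(G):f)$ is $\bigcap_\gamma (I(G):t^\gamma)$. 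Since the target $\mathfrak{p}_C$ is a monomial ideal equal to $(I(G):f)$, we obtain $\mathfrak{p}_C=\bigcap_\gamma(I(G):t^\gamma)$ with each $(I(G):t^\gamma)\supseteq\mathfrak{p}_C$; primality of $\mathfrak{p}_C$ then forces $(I(G):t^\gamma)=\mathfrak{p}_C$ for some $\gamma$ (otherwise a product of monomials, one from each $(I(G):t^\gamma)\setminus\mathfrak{p}_C$, would lie in the intersection yet avoid the prime $\mathfrak{p}_C$). Finally, because $I(G)$ is squarefree, $(I(G):t^\gamma)=(I(G):\prod_{i\in\operatorname{supp}(\gamma)}t_i)$, so we may assume $f=f_A:=\prod_{i\in A}t_i$ for a set $A\subseteq[n]$ of cardinality $\le\deg f$.

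With $f=f_A$ the argument becomes purely combinatorial. If $A$ contained an edge then $f_A\in I(G)$ and $(I(G):f_A)=S$ is not prime, so $A$ must be stable. For stable $A$ I would read off the minimal monomial generators of $(I(G):f_A)$ directly: an edge meeting $A$ in exactly one vertex contributes the variable $t_k$ of its other endpoint $k\in N_G(A)$, whereas an edge disjoint from $A$ contributes its degree-two generator. This yields
$$(I(G):f_A)=\mathfrak{p}_{N_G(A)}+I(G_W),\qquad W:=[n]\setminus\bigl(A\cup N_G(A)\bigr),$$
where $G_W$ is the induced subgraph on $W$. Requiring this ideal to be the prime $\mathfrak{p}_C$ forces $I(G_W)=0$, i.e. $W$ is stable, and then $\mathfrak{p}_{N_G(A)}=\mathfrak{p}_C$, so $N_G(A)=C$ is a minimal vertex cover and $A\in\mathcal{A}_G$; hence $\min\{|A|\mid A\in\mathcal{A}_G\}\le\deg f$, giving ``$\ge$''. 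For the reverse inequality, given any $A\in\mathcal{A}_G$ the set $N_G(A)$ is a vertex cover, so its complement is stable and in particular $W$ is stable; the displayed formula reduces to $(I(G):f_A)=\mathfrak{p}_{N_G(A)}$, an associated prime, which witnesses ${\rm v}(I(G))\le|A|$ and yields ``$\le$''.

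The colon computation above is routine; the crux is the reduction to a single squarefree monomial, and specifically the passage from ``$\bigcap_\gamma(I(G):t^\gamma)=\mathfrak{p}_C$'' to ``some $(I(G):t^\gamma)=\mathfrak{p}_C$''. The subtle point is that the annihilator of a non-monomial $f$ need not be a monomial ideal, so one must first invoke the hypothesis $(I(G):f)=\mathfrak{p}_C$ (a monomial prime) before extracting a monomial part at all; it is precisely the irreducibility of $\mathfrak{p}_C$ that upgrades an equality of the intersection to an equality of one factor. I would also verify that the assumption ``$I(G)$ not prime'' is used only to ensure $E(G)\ne\emptyset$, so that the relevant $\mathfrak{p}_C$ are proper associated primes and $A=\emptyset$ is excluded, while connectedness of $G$ serves only as the ambient setting and plays no essential role in the estimate.
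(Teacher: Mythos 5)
The paper does not actually prove this statement --- it is quoted verbatim from \cite{DR} --- so there is no internal proof to compare against; your argument is correct and follows the same standard route as that reference. Namely: reduce to a squarefree monomial $f_A$ by exploiting the multigraded structure of the monomial ideal $I(G)$ together with the primality of the target ideal $\mathfrak{p}_C$, then compute $(I(G):f_A)=\mathfrak{p}_{N_G(A)}+I(G_W)$ explicitly and read off that equality with an associated prime is equivalent to $A$ being stable with $N_G(A)$ a minimal vertex cover.
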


\begin{remark} It follows from the above theorem that $\mathrm{v}(I(G))\leq i(G)$, where $i(G)$ is the \emph{independent domination number}, that is the minimum size of an independent set such that every vertex not in the set is adjacent to a vertex in it (see {\rm \cite[Corollary 3.6]{DR}}). Again, this stresses the interconnected nature of v-numbers and  domination theory (see also {\rm\cite[Theorem 9]{Civan}}).
\end{remark}

%
If $I$ is an ideal without embedded primes (e.g., edge ideals, binomial edge ideals), we have an alternate description for the v-number using initial degrees of certain modules. This characterization also allows us to compute the v-number using \emph{Macaulay2} (see Code \ref{v-number-procedure}). 

\begin{proposition}\cite[Theorem 10 (d)]{Gri-Re-Vill} \label{noembedded}
Let $I$ be a graded ideal with no embedded primes, and $\mathfrak{p}\in {\rm Ass}(I)$. Then 
$$
{\rm v}_{\mathfrak{p}}(I)=\alpha((I:\mathfrak{p})/I)$$
where, for any graded module $M \neq 0$, $\alpha(M):=\min \{\deg(f)\mid f\in M\setminus\{0\}\}$ and, by convention, we set $\alpha(0)=0$ for $M=0$. 
In particular, $
{\rm v}(I)=\min\{\alpha((I:\mathfrak{p})/I)\mid \mathfrak{p}\in{\rm Ass}(I)\}.$
\end{proposition}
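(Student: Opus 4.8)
The plan is to reduce the statement to a single clean biconditional: for a homogeneous polynomial $f$,
$$(I:f)=\mathfrak{p} \iff f\in (I:\mathfrak{p})\setminus I.$$
Granting this, the proposition is immediate. Nonzero homogeneous elements of $(I:\mathfrak{p})/I$ are precisely the classes of homogeneous $f\in(I:\mathfrak{p})\setminus I$, and degree is preserved under passing to the quotient, so
$${\rm v}_{\mathfrak{p}}(I)=\min\{\deg f:\ f\text{ homogeneous},\ (I:f)=\mathfrak{p}\}=\alpha\big((I:\mathfrak{p})/I\big),$$
and the formula for ${\rm v}(I)$ then follows by minimizing over $\mathfrak{p}\in\Ass(I)$ and invoking ${\rm v}(I)=\min_{\mathfrak{p}\in\Ass(I)}{\rm v}_{\mathfrak{p}}(I)$. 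I would first record that, since $\mathfrak{p}\in\Ass(I)$ is realized as a homogeneous colon ideal, the graded module $(I:\mathfrak{p})/I$ is nonzero, so $\alpha$ is attained by an honest homogeneous element and the convention $\alpha(0)=0$ is never invoked here.

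The forward implication is the easy one: if $(I:f)=\mathfrak{p}$ then $\mathfrak{p}f\subseteq I$, so $f\in(I:\mathfrak{p})$, and $f\notin I$ since otherwise $(I:f)=S\neq\mathfrak{p}$.

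The substance is the reverse implication, and this is exactly where the hypothesis of no embedded primes enters. I would exploit the primary decomposition. Since there are no embedded primes, $\Ass(I)=\operatorname{Min}(I)=\{\mathfrak{p},\mathfrak{q}_1,\dots,\mathfrak{q}_r\}$ consists of minimal, hence pairwise incomparable, primes; take a minimal primary decomposition $I=Q_{\mathfrak{p}}\cap I'$, where $Q_{\mathfrak{p}}$ is $\mathfrak{p}$-primary and $I'=\bigcap_{i}Q_{\mathfrak{q}_i}$ with $Q_{\mathfrak{q}_i}$ being $\mathfrak{q}_i$-primary (and $I'=S$ if $\Ass(I)=\{\mathfrak{p}\}$). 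The key computation is that colon by $\mathfrak{p}$ leaves $I'$ unchanged: for each $i$, since $\mathfrak{p}\not\subseteq\mathfrak{q}_i$ I may pick $p\in\mathfrak{p}\setminus\mathfrak{q}_i$, and from $h\mathfrak{p}\subseteq Q_{\mathfrak{q}_i}$ I get $hp\in Q_{\mathfrak{q}_i}$ with $p\notin\sqrt{Q_{\mathfrak{q}_i}}=\mathfrak{q}_i$, forcing $h\in Q_{\mathfrak{q}_i}$; hence $(Q_{\mathfrak{q}_i}:\mathfrak{p})=Q_{\mathfrak{q}_i}$ and therefore $(I':\mathfrak{p})=I'$. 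Consequently every $f\in(I:\mathfrak{p})$ satisfies $f\mathfrak{p}\subseteq I\subseteq I'$, that is $f\in(I':\mathfrak{p})=I'$, so a homogeneous $f\in(I:\mathfrak{p})\setminus I$ must fail to lie in $Q_{\mathfrak{p}}$ (otherwise $f\in Q_{\mathfrak{p}}\cap I'=I$). Finally, for such $f$ we have $\mathfrak{p}\subseteq(Q_{\mathfrak{p}}:f)$, and since $f\notin Q_{\mathfrak{p}}$ the ideal $(Q_{\mathfrak{p}}:f)$ is again $\mathfrak{p}$-primary, so $(Q_{\mathfrak{p}}:f)\subseteq\sqrt{(Q_{\mathfrak{p}}:f)}=\mathfrak{p}$, giving $(Q_{\mathfrak{p}}:f)=\mathfrak{p}$. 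Combining with $(I':f)=S$ (as $f\in I'$) yields $(I:f)=(Q_{\mathfrak{p}}:f)\cap(I':f)=\mathfrak{p}$, as desired.

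The only place I expect to need care is this reverse implication, and within it the transversality identity $(Q_{\mathfrak{q}_i}:\mathfrak{p})=Q_{\mathfrak{q}_i}$: this is the precise point at which ``no embedded primes'' is used, through the incomparability $\mathfrak{p}\not\subseteq\mathfrak{q}_i$ of minimal primes, and it is what guarantees $(I:\mathfrak{p})\subseteq I'$ and hence that every minimal-degree element of $(I:\mathfrak{p})/I$ is a genuine witness for ${\rm v}_{\mathfrak{p}}(I)$. With that identity established, the remaining steps are standard facts about colons of primary ideals, so no serious obstacle remains.
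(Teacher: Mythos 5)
Your proof is correct, but note that the paper offers no proof of this proposition to compare against: it is imported verbatim from \cite[Theorem 10 (d)]{Gri-Re-Vill}, so your write-up supplies a self-contained argument where the paper only cites. Your reduction to the biconditional $(I:f)=\mathfrak{p}\iff f\in(I:\mathfrak{p})\setminus I$ is exactly the right mechanism, and each step checks out: the forward direction is trivial; for the reverse, the transversality identity $(Q_{\mathfrak{q}_i}:\mathfrak{p})=Q_{\mathfrak{q}_i}$ is valid because the absence of embedded primes makes the associated primes pairwise incomparable, so $\mathfrak{p}\not\subseteq\mathfrak{q}_i$ and primaryness applies; this yields $(I':\mathfrak{p})=I'$, hence $f\in I'$ and $f\notin Q_{\mathfrak{p}}$, and the standard fact that $(Q_{\mathfrak{p}}:f)$ is $\mathfrak{p}$-primary when $f\notin Q_{\mathfrak{p}}$, combined with $\mathfrak{p}\subseteq(Q_{\mathfrak{p}}:f)\subseteq\sqrt{(Q_{\mathfrak{p}}:f)}=\mathfrak{p}$ and $(I':f)=S$, closes the loop. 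You also correctly dispose of the two bookkeeping points: since $I$ is graded, each $\mathfrak{p}\in\Ass(I)$ is homogeneous and realized as $(I:f)$ for a \emph{homogeneous} $f$, so $(I:\mathfrak{p})/I\neq 0$ and the convention $\alpha(0)=0$ is never invoked; and $\alpha$ of a nonzero graded module is attained on homogeneous elements, so it matches the definition of ${\rm v}_{\mathfrak{p}}(I)$, which quantifies over $f\in S_d$. The edge case $\Ass(I)=\{\mathfrak{p}\}$ (where $I'=S$) goes through with your conventions. This is essentially the standard argument underlying the cited result, so there is nothing to repair; the only gain over the paper is that your version makes explicit where ``no embedded primes'' is genuinely used, namely in the incomparability feeding the identity $(I':\mathfrak{p})=I'$, without which an element of $(I:\mathfrak{p})\setminus I$ need not witness $(I:f)=\mathfrak{p}$.
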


\section{v-numbers of binomial edge ideals}\label{v-num-bino-section}


This section is concerned with the study of v-numbers of binomial edge ideals. Relying on some results in \cite{Fatemeh-Leila}, we prove that the algebraic invariant ${\rm v}_{J_{\mathcal{K}_{n}}}(J_{G})$ coincides with a purely combinatorial invariant of the graph $G$, namely its connected domination number (see Theorem \ref{comb_at_comple}). We then use this result to relate ${\rm v}(J_{G})$ and ${\rm v}({\rm in}_{\prec}(J_{G}))$ in the case of closed graphs (see Theorem \ref{relation_initial}).

\subsection{v-numbers and connected domination}
We start by proving that the v-number of binomial edge ideals is additive on connected components and we can then restrict ourselves to connected graphs.

\begin{proposition}\label{additivity}
Let $G$ be a graph on $[n]$ and $G_{1},\dots,G_{r}$ be its connected components. Let $S_{i}$ be the polynomial ring in the indeterminates indexed by the vertices of $G_{i}$. Then  
$$
{\rm v}(J_{G})={\rm v}(J_{G_{1}}S+\dots+J_{G_{r}}S)={\rm v}(J_{G_{1}})+\dots+{\rm v}(J_{G_{r}}).
$$
\end{proposition}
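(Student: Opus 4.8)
The plan is to prove the two equalities separately, with the second being the substantive one. The first equality is essentially definitional: I want to show that $J_G = J_{G_1}S + \dots + J_{G_r}S$ as ideals of the big polynomial ring $S$. Since every generator $f_{ij} = x_i y_j - x_j y_i$ of $J_G$ comes from an edge $\{i,j\}$, and every edge of $G$ lies entirely within some connected component $G_k$, each generator of $J_G$ is a generator of the corresponding $J_{G_k}S$, and conversely. Hence the two ideals coincide, and so do their v-numbers trivially.

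For the main equality, the key structural input is that the variables indexing distinct connected components are disjoint: writing $S = S_1 \otimes_K \dots \otimes_K S_r$, the ideal $J_{G_k}S$ involves only the variables $\{x_j, y_j\}_{j \in V(G_k)}$. The plan is to exploit this disjointness at the level of associated primes. By Theorem \ref{primary-decomp} and Proposition \ref{minimalpri}(a), every associated (indeed minimal) prime of $J_G$ decomposes as $P_T(G) = \sum_{k=1}^r P_{T_k}(G_k)S$, a sum of primes each living in disjoint variable sets. So I would fix an associated prime $\mathfrak{p} = \sum_k \mathfrak{p}_k$ of $J_G$, where $\mathfrak{p}_k = P_{T_k}(G_k)$ is an associated prime of $J_{G_k}$, and analyze ${\rm v}_{\mathfrak{p}}(J_G)$ in terms of the ${\rm v}_{\mathfrak{p}_k}(J_{G_k})$.

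The crucial multiplicativity/additivity step relies on Proposition \ref{noembedded}: since binomial edge ideals have no embedded primes, ${\rm v}_{\mathfrak{p}}(J_G) = \alpha\bigl((J_G : \mathfrak{p})/J_G\bigr)$. I would like to show that a colon-witness for $\mathfrak{p}$ can be taken as a product $f = f_1 \cdots f_r$ of homogeneous witnesses $f_k \in S_k$ for the respective $\mathfrak{p}_k$, so that $\deg f = \sum_k \deg f_k$. The idea is that because the variable sets are disjoint, $(J_G : f) = \mathfrak{p}$ holds if and only if $(J_{G_k}S : f_k) = \mathfrak{p}_k S$ component-wise; the colon ideal of a product, when the factors live in independent variable blocks, splits as the sum of the colons. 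This would give ${\rm v}_{\mathfrak{p}}(J_G) \le \sum_k {\rm v}_{\mathfrak{p}_k}(J_{G_k})$. For the reverse inequality, I would argue that any single witness $f$ for $\mathfrak{p}$ must, after a monomial/support analysis, decompose multiplicatively across the blocks, forcing its degree to be at least $\sum_k {\rm v}_{\mathfrak{p}_k}(J_{G_k})$. Taking minima over associated primes then yields ${\rm v}(J_G) = \sum_k {\rm v}(J_{G_k})$.

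The main obstacle I anticipate is making the colon-splitting argument rigorous, namely proving that $\bigl(\sum_k J_{G_k}S : \sum_k \mathfrak{p}_k\bigr) = \sum_k (J_{G_k}S : \mathfrak{p}_k S)$ and that $\alpha$ is additive on the resulting tensor-factored quotient module $\bigl((J_G:\mathfrak{p})/J_G\bigr) \cong \bigotimes_k \bigl((J_{G_k}:\mathfrak{p}_k)/J_{G_k}\bigr)$. The clean way to handle this is to use the tensor-product structure $S \cong S_1 \otimes_K \cdots \otimes_K S_r$ over the field $K$: colon ideals, primary decompositions, and initial degrees all behave well under flat base change and tensor products of modules over a field, so the minimal generator achieving the initial degree of the tensor product is the product of the minimal generators of the factors. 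I expect this reduces, after the tensor-product bookkeeping, to the fact that $\alpha(M \otimes_K N) = \alpha(M) + \alpha(N)$ for graded modules with the standard grading — the only genuinely technical point requiring care.
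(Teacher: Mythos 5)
Your proposal is correct, and it reaches the conclusion by a mechanism that genuinely differs from the paper's. Both arguments share the same skeleton: the first equality is definitional, and the second rests on the decomposition of the associated (= minimal, since $J_G$ is radical) primes as $P_T(G)=\sum_k P_{T_k}(G_k)S$ from Proposition \ref{minimalpri}, together with ${\rm v}_{\mathfrak p}(I)=\alpha((I:\mathfrak p)/I)$ from Proposition \ref{noembedded}. The paper then proves the two inequalities separately: for one direction it invokes the identity $\bigcap_i (J_{G_i}:P_{T_i}(G_i))=\prod_i(J_{G_i}:P_{T_i}(G_i))$ and factors a minimizing witness as $f=f_1\cdots f_r$; for the other it multiplies component witnesses and applies Proposition \ref{noembedded}. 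You instead prove the stronger local statement ${\rm v}_{\mathfrak p}(J_G)=\sum_k{\rm v}_{\mathfrak p_k}(J_{G_k})$ for every associated prime simultaneously, via the graded isomorphism $(J_G:\mathfrak p)/J_G\cong\bigotimes_k \((J_{G_k}:\mathfrak p_k)/J_{G_k}\)$ and the additivity $\alpha(M\otimes_K N)=\alpha(M)+\alpha(N)$, then take minima over the independent choices of the $\mathfrak p_k$. This buys you a sharper conclusion (additivity of the localized v-numbers) and, more importantly, it cleanly handles the one delicate point in any such argument: an element of a product (or intersection) of ideals in disjoint variable blocks need only be a \emph{sum} of products $f_1\cdots f_r$, not a single product, so your first-pass idea that ``any single witness must decompose multiplicatively after a support analysis'' is false as literally stated; the degree bookkeeping must be done on graded pieces, which is exactly what $\alpha(M\otimes_K N)=\alpha(M)+\alpha(N)$ accomplishes. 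In the final write-up, make sure to justify the tensor splitting of the annihilator, which reduces to $\ann_{M_1\otimes M_2}(\mathfrak p_1)=\ann_{M_1}(\mathfrak p_1)\otimes M_2$ (flatness over $K$) and the subspace identity $(U\otimes M_2)\cap(M_1\otimes V)=U\otimes V$, and to note that each factor $(J_{G_k}:\mathfrak p_k)/J_{G_k}$ is nonzero because $\mathfrak p_k$ is an associated prime of $J_{G_k}$.
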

\begin{proof}
Since $G_{1},\dots,G_{r}$ are the connected components of $G$, we can write $J_{G}=J_{G_{1}}S+\dots +J_{G_{r}}S$. Let  $T\subseteq [n]$ such that ${\rm v}(J_{G})={\rm v}_{P_T(G)}(J_{G})$. Then, there exists $f\in S$ with $\deg (f)={\rm v}(J_{G})$ and 
$$
(J_{G}:f)=P_{T}(G)=\sum_{i=1}^{r}P_{T_{i}}(G_{i})S\quad \hbox{(by Proposition \ref{minimalpri}).}
$$
In particular, this implies that $f\in(J_{G}:P_T(G)) \setminus J_G$. Moreover,
$$
(J_{G}:P_T(G))=\bigcap_{i=1}^{r}(J_{G}:P_{T_i}(G_{i}))=\bigcap_{i=1}^r(J_{G_{i}}:P_{T_i}(G_{i}))+ J_G.
$$
Since $\bigcap_{i=1}^r(J_{G_{i}}:P_{T_i}(G_{i}))=\prod_{i=1}^r(J_{G_{i}}:P_{T_i}(G_{i}))$ {\rm \cite[Section 4]{SIMVASVI}}, we can write 
$$f=f_{1}\cdots f_{r} \quad \hbox{with } f_{i}\in (J_{G_{i}}:P_{T_i}(G_{i})) \setminus J_{G_{i}} .$$  Thus $ (J_{G_{i}}:f_i)=P_{T_i}(G_{i})$ and 
$${\rm v}(J_{G_{1}})+\dots+{\rm v}(J_{G_{r}})\leq \deg(f_{1})+\dots+\deg(f_{r})={\rm v}(J_{G}).$$
As for the other inequality, for every connected component of $G$ we choose $f_{i}\in S_{i}$  such that ${\rm v}(J_{G_{i}})=\deg(f_{i})$. Hence, $ f_{i} \in (J_{G_{i}}:P_{T_{i}}(G_{i})) \setminus  J_{G_{i}}$ for some $P_{T_{i}}(G_{i})\in{\rm Min}(S_{i}/J_{G_{i}})$. As before, we have
$$f:=f_{1}\cdots f_{r} \in \bigcap_{i=1}^r(J_{G_{i}}:P_{T_i}(G_{i}))= (J_G: P_T (G))/J_G, $$
where $P_{T}(G)=\sum_{i=1}^{r}P_{T_{i}}(G_{i})S$ and, by Lemma \ref{minimalpri}, it is a minimal prime of $J_G$. In particular, using Proposition \ref{noembedded}, we get
$$
{\rm v}(J_{G})\leq \alpha((J_G:P_T (G))/I) \leq \deg (f) = \deg(f_{1})+\cdots + \deg(f_{r})={\rm v}(J_{G_{1}})+\dots+{\rm v}(J_{G_{r}})
$$
and the proof is complete.
\end{proof}

From here on, we can then limit our study to the case of connected graphs. By Remark \ref{Knprime}, if $G$ is connected, $J_{\mathcal{K}_n}$ is a minimal prime of $J_G$ and ${\rm v}(J_{G}) \leq {\rm v}_{J_{\mathcal{K}_n}}(J_{G})$. \par 
 We then come to the main result of this subsection. The following shows that ${\rm v}_{J_{\mathcal{K}_n}}(J_{G})$ coincides with a combinatorial invariant of the graph, namely its connected domination number.  

\begin{theorem}\label{comb_at_comple}
Let $G$ be a connected graph with $n$ vertices and $J_{G}$ be its
 binomial edge ideal. Then, the localization of the {\rm v}-number of $J_G$
 at $J_{\mathcal{K}_{n}}$ is given by
 $$
{\rm v}_{J_{\mathcal{K}_{n}}}(J_{G})=\begin{cases}\min\lbrace |B|\: |\:
B\in\mathcal{D}_{c}(G)\rbrace=\gamma_c (G) &\quad \mbox{if }G\neq \mathcal{K}_n\\
0 & \quad \hbox{if }G=\mathcal{K}_n
\end{cases}
$$
where $\mathcal{D}_{c}(G)= \left\lbrace B \subseteq [n] \mid \forall \lbrace i,j\rbrace\not\in E(G), \exists \ \textit{a path} \  P: i,i_{1},\dots,i_{s},j \text{ s.t. }\lbrace i_{1},\dots,i_{s}\rbrace\subseteq B\right\rbrace$.
\end{theorem}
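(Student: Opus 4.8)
The plan is to compute $\mathrm{v}_{J_{\mathcal{K}_n}}(J_G)$ through the description in Proposition \ref{noembedded}. Since $J_G$ is radical (Theorem \ref{primary-decomp}) it has no embedded primes, so $\mathrm{v}_{J_{\mathcal{K}_n}}(J_G) = \alpha\big((J_G : J_{\mathcal{K}_n})/J_G\big)$; that is, I must find the least degree of a nonzero element of this colon module. When $G = \mathcal{K}_n$ we have $J_G = J_{\mathcal{K}_n}$ prime, so the colon is all of $S$ and the class of $1$ gives $\alpha = 0$, matching the stated value. For $G \neq \mathcal{K}_n$ I will establish the inequality $\mathrm{v}_{J_{\mathcal{K}_n}}(J_G) \le \min\{|B| : B \in \mathcal{D}_c(G)\}$ and its reverse, and separately the purely graph-theoretic identity $\min\{|B| : B \in \mathcal{D}_c(G)\} = \gamma_c(G)$.

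For the upper bound the engine is the Pl\"ucker-type identity $x_k f_{ij} = x_i f_{kj} + x_j f_{ik}$ (and its $y$-analogue), valid for any indices. If $i_0, i_1, \dots, i_s, i_{s+1}$ is a path in $G$, induction on $s$ using this identity---peeling off $i_s$ and invoking $f_{i_s i_{s+1}} \in J_G$ together with the inductive hypothesis applied to the subpath $i_0,\dots,i_s$---yields $x_{i_1}\cdots x_{i_s}\, f_{i_0 i_{s+1}} \in J_G$. Now take a minimal $B \in \mathcal{D}_c(G)$ and set $f = \prod_{b \in B} x_b$. For an edge $\{i,j\}$ we have $f_{ij} \in J_G$; for a non-edge $\{i,j\}$ the defining property of $\mathcal{D}_c(G)$ supplies a path from $i$ to $j$ whose interior $\{i_1,\dots,i_s\}$ lies in $B$, and the lemma gives $x_{i_1}\cdots x_{i_s} f_{ij} \in J_G$, whence $f f_{ij} \in J_G$ since $x_{i_1}\cdots x_{i_s} \mid f$. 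Thus $f \in (J_G : J_{\mathcal{K}_n})$, while $f \notin J_G$ because the substitution $y_\ell \mapsto 0$ kills $J_G$ but not $f$. Therefore $\mathrm{v}_{J_{\mathcal{K}_n}}(J_G) \le \deg f = |B|$.

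The reverse inequality is the main obstacle. Given $f \in (J_G : J_{\mathcal{K}_n}) \setminus J_G$ of minimal degree $d$, I would put $f$ in normal form with respect to the admissible-path Gr\"obner basis of $J_G$ \cite{HHHKR} and set $B = \operatorname{supp}(\init f)$, so that $|B| \le \deg(\init f) = d$. For each non-edge $\{i,j\}$ the relation $f f_{ij} \in J_G$ forces $\init(f f_{ij}) \in \init(J_G)$, and the combinatorial description of the colon (Proposition \ref{comb-des-colo}, extending \cite[Theorem 3.7]{Fatemeh-Leila}) should translate this into the existence of an $i$--$j$ path with interior contained in $B$, giving $B \in \mathcal{D}_c(G)$ and hence $\min\{|B'| : B' \in \mathcal{D}_c(G)\} \le d$. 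The delicate points are controlling cancellation so that the leading term of $f f_{ij}$ is $\init(f)\, x_i y_j$, and extracting from the divisibility $\init(f)\, x_i y_j \in \init(J_G)$ a genuine path joining exactly $i$ and $j$ (admissibility restricts which paths occur in the basis, so one must argue that a suitable path can always be produced); this is precisely what Proposition \ref{comb-des-colo} is designed to supply. Finally, the equality $\min\{|B| : B \in \mathcal{D}_c(G)\} = \gamma_c(G)$ follows by identifying $\mathcal{D}_c(G)$ with the family of connected dominating sets: a CDS satisfies the path condition, since any non-edge $\{i,j\}$ can be routed by first stepping from $i$ and from $j$ into $B$ and then through the connected subgraph $G_B$; conversely any $B \in \mathcal{D}_c(G)$ is a connected dominating set, because for $b,b' \in B$ the edge (if adjacent) or the guaranteed path (if not) lies entirely in $B$, so $G_B$ is connected, and a vertex $v \notin B$ is dominated since either it is adjacent to every other vertex, hence to an element of the nonempty set $B$, or, choosing a non-neighbour $w$, the first interior vertex of the guaranteed $v$--$w$ path is a neighbour of $v$ lying in $B$. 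Thus the two minima coincide, completing the argument.
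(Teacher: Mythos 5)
Your overall architecture is the same as the paper's: reduce to $\alpha\bigl((J_G:J_{\mathcal{K}_n})/J_G\bigr)$ via Proposition \ref{noembedded}, invoke Proposition \ref{comb-des-colo} for the structure of the colon ideal, and identify $\mathcal{D}_c(G)$ with the family of connected dominating sets (your argument for that last step, including the observation that $B\neq\emptyset$ and that the first interior vertex of a guaranteed path dominates $v$, matches the paper's). Two points of comparison. First, your upper bound is genuinely more self-contained than the paper's: the inductive use of the identity $x_kf_{ij}=x_if_{kj}+x_jf_{ik}$ along a path, together with the observation that $J_G\subseteq(y_1,\dots,y_n)$ so that $\prod_{b\in B}x_b\notin J_G$, replaces the appeal to Mohammadi--Sharifan (Theorem \ref{fat-lei}) and the easy containment of Proposition \ref{comb-des-colo}; this is correct and arguably cleaner. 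Second, your lower bound is over-engineered and, as written, is a plan rather than a proof: the normal-form reduction, the control of $\init(f\,f_{ij})$ against cancellation, and the extraction of an admissible path from a divisibility in $\init(J_G)$ are exactly the ``delicate points'' you flag but do not resolve. None of that machinery is needed once Proposition \ref{comb-des-colo} is in hand. Since $(J_G:J_{\mathcal{K}_n})=J_G+M$ with $M=(g_{C,D}\mid C\cap D=\emptyset,\ C\cup D\in\mathcal{D}_c(G))$ a monomial ideal, any homogeneous $f$ of degree $d$ in the colon but not in $J_G$ decomposes as $f=g+m$ with $g\in J_G$ and $0\neq m\in M$ homogeneous of degree $d$, whence $d\geq\alpha(M)=\min\{|C\cup D|\}=\min\{|B|:B\in\mathcal{D}_c(G)\}$ (here one only needs that $J_G\subseteq J_{\mathcal{K}_n}$ contains no monomials, so the minimal-degree generators of $M$ survive in the quotient). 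This one-line argument is essentially what the paper does, and you should replace your sketch with it; with that substitution your proof is complete.
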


To prove Theorem \ref{comb_at_comple} we will use the equivalent characterization of v-numbers of graded ideals with no embedded primes, given in Proposition \ref{noembedded}.\par
We are interested in the case $\mathfrak{p}=J_{\mathcal{K}_n}$, so we want to compute $\alpha((J_{G}:J_{\mathcal{K}_{n}})/J_G)$. First, we observe that $(J_{G}:J_{\mathcal{K}_{n}})=\bigcap_{e\not\in E(G)}(J_{G}:f_{e})$ and the colon ideals appearing in this intersection were already studied by Mohammadi and Sharifan in \cite{Fatemeh-Leila}. 

\begin{definition}{\rm \cite[Definition 3.1]{Fatemeh-Leila}}\label{e-graph}
Let $G$ be a simple graph on $[n]$. Given $e=\lbrace i,j\rbrace\not\in E(G)$, we define $G_{e}$ to be the graph on $[n]$ with edges:
\begin{align*}
E(G_{e})=E(G)\cup\lbrace \lbrace k,l\rbrace\:|\: k,l\in N_G(i)\hbox{ or } k,l\in N_G(j)\rbrace.
\end{align*}
\end{definition}

\begin{theorem}{\rm \cite[Theorem 3.7]{Fatemeh-Leila}}{\rm \cite[Lemma 3.8]{Fatemeh-Leila}}\label{fat-lei}
Let $G$ be a graph on $[n]$ and $e=\lbrace i,j\rbrace\not\in E(G)$. Then 
$$
(J_{G}:f_{e})=J_{G_{e}}+(g_{P,t}\:|\: P:i,i_{1},\dots,i_{s},j \hbox{ is a path from }i \hbox{ to }j\hbox{ and }0\leq t\leq s),
$$
where $g_{P,0}:=x_{i_{1}}\cdots x_{i_{s}}$ and for each $1\leq t\leq s$, $g_{P,t}:=y_{i_{1}}\cdots y_{i_{t}}x_{i_{t+1}}\cdots x_{i_{s}}$.\par 
Moreover, if  $P: i,i_{1},\dots,i_{s},j$ is a path from $i$ to $j$ and $C,D$ are two arbitrary subsets of $\{i_{1},\dots,i_{s}\}$ with $ C\cap D = \emptyset$ and $C\cup D=\{i_{1},\dots,i_{s}\}$, then $$h_{C,D}:=\prod_{k \in C} x_{k} \prod_{k\in D} y_k \in (J_{G}:f_{e}).$$
\end{theorem}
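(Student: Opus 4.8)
The plan is to prove the asserted equality by establishing the two inclusions separately, and to obtain the ``moreover'' clause (from which all the generators $g_{P,t}$ drop out as special cases) by a direct telescoping computation resting on the Plücker-type identities satisfied by the $2$-minors of $X_n$. Throughout I write $f_{ab}:=x_ay_b-x_by_a$ for the $2$-minor on columns $a,b$; this is a generator of $J_G$ exactly when $\{a,b\}\in E(G)$, but the following identities hold for arbitrary indices and are each a one-line polynomial computation: the two exchange relations
\begin{equation*}
x_b\,f_{ac}=x_a\,f_{bc}+x_c\,f_{ab},\qquad y_b\,f_{ac}=y_a\,f_{bc}+y_c\,f_{ab},
\end{equation*}
together with the three-term Plücker relation $f_{ab}f_{cd}=f_{ac}f_{bd}-f_{ad}f_{bc}$.

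First I would prove the easy inclusion $J_{G_e}+(g_{P,t})\subseteq (J_G:f_e)$. For the generators of $J_{G_e}$ already lying in $J_G$ there is nothing to show. For a new edge $\{k,l\}$ with, say, $k,l\in N_G(i)$, the Plücker relation gives $f_{kl}f_{ij}=f_{ki}f_{lj}-f_{kj}f_{li}$; since $\{i,k\},\{i,l\}\in E(G)$ both $f_{ki},f_{li}\in J_G$, whence $f_{kl}f_e\in J_G$, i.e.\ $f_{kl}\in(J_G:f_e)$ (the case $k,l\in N_G(j)$ is symmetric). For the ``moreover'' clause I would prove, by induction on the number $t$ of internal vertices, that for any path $w_0,w_1,\dots,w_t,w_{t+1}$ with all $\{w_p,w_{p+1}\}\in E(G)$ and any partition $\{w_1,\dots,w_t\}=C\sqcup D$, the element $\big(\prod_{k\in C}x_k\prod_{k\in D}y_k\big)f_{w_0w_{t+1}}$ lies in $J_G$. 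The base case $t=0$ is just $f_{w_0w_1}\in J_G$. For the step, apply the $x$-exchange relation (if $w_t\in C$) or the $y$-exchange relation (if $w_t\in D$) to $f_{w_0w_{t+1}}$ with $(a,b,c)=(w_0,w_t,w_{t+1})$: one resulting term carries the factor $f_{w_tw_{t+1}}\in J_G$, while the other retains $f_{w_0w_t}$ and, after multiplication by the monomial on $\{w_1,\dots,w_{t-1}\}$, lies in $J_G$ by the inductive hypothesis applied to the shorter path $w_0,\dots,w_t$. Taking $D$ to be a path-prefix and $C$ the complementary suffix recovers exactly $g_{P,t}\in(J_G:f_e)$.

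The substantial direction is the reverse inclusion $(J_G:f_e)\subseteq J_{G_e}+(g_{P,t})$, and this is where I expect the real work to lie. My approach would be to exploit the primary decomposition of Theorem \ref{primary-decomp}: since $J_G=\bigcap_{\mathcal{S}}P_{\mathcal{S}}(G)$ and each $P_{\mathcal{S}}(G)$ is prime, colon distributes over the intersection to give $(J_G:f_e)=\bigcap_{\mathcal{S}:\,f_e\notin P_{\mathcal{S}}(G)}P_{\mathcal{S}}(G)$. A short case analysis shows $f_e\in P_{\mathcal{S}}(G)$ unless $i,j\notin\mathcal{S}$ and $i,j$ lie in distinct connected components of $G_{[n]\setminus\mathcal{S}}$; hence the colon is the intersection of the primes $P_{\mathcal{S}}(G)$ over all vertex sets $\mathcal{S}$ that separate $i$ from $j$. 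It then remains to prove that this intersection is contained in $L:=J_{G_e}+(g_{P,t})$. I would do this by showing that $L$ is radical with minimal primes exactly these separating $P_{\mathcal{S}}(G)$ — equivalently, that a normal form of any $f\in(J_G:f_e)$ modulo a Gröbner basis of $J_{G_e}$ consists of monomials each divisible by some $g_{P,t}$, because the separation condition forces every surviving monomial to ``cover'' an $i$–$j$ path.

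The main obstacle is precisely this matching step: controlling the normal form modulo $J_{G_e}$ (whose defining graph $G_e$ need not be closed, so its Gröbner basis is not automatically quadratic) and translating the purely combinatorial separation condition on the sets $\mathcal{S}$ into divisibility by the monomials $g_{P,t}$. I would attempt to handle it by induction on $n$, or on the number of non-edges incident to $i$ or $j$, peeling off one vertex of a separating set at a time and using the very definition of $G_e$ to absorb the resulting binomial corrections back into $J_{G_e}$ — the edges adjoined in $G_e$ being tailored exactly so that these corrections remain inside $L$.
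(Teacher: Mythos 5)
First, be aware that the paper does not prove this statement at all: Theorem \ref{fat-lei} is quoted verbatim from Mohammadi and Sharifan \cite[Theorem 3.7, Lemma 3.8]{Fatemeh-Leila}, so there is no internal proof to measure yours against. Judged on its own merits, your argument for the inclusion $J_{G_e}+(g_{P,t})\subseteq (J_G:f_e)$ and for the ``moreover'' clause is correct and complete: the exchange relations $x_bf_{ac}=x_af_{bc}+x_cf_{ab}$ and $y_bf_{ac}=y_af_{bc}+y_cf_{ab}$ and the Pl\"ucker relation $f_{ab}f_{cd}=f_{ac}f_{bd}-f_{ad}f_{bc}$ all check out, the Pl\"ucker argument correctly handles the edges adjoined in $G_e$, and your induction on the number of internal vertices does yield $h_{C,D}f_e\in J_G$ for every partition $C\sqcup D$ of the interior of a path, of which the $g_{P,t}$ are special cases.

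The reverse inclusion, which is the substance of the theorem, remains only a plan, and the gap you yourself flag is genuine. The reduction of $(J_G:f_e)$ to $\bigcap_{\mathcal{S}}P_{\mathcal{S}}(G)$ over the sets $\mathcal{S}$ separating $i$ from $j$ is sound (colon distributes over the intersection of primes, and your case analysis of when $f_e\in P_{\mathcal{S}}(G)$ is correct), but the containment of that intersection in $L=J_{G_e}+(g_{P,t})$ is never established. The proposed mechanism --- that $L$ is radical with minimal primes exactly the separating $P_{\mathcal{S}}(G)$, to be verified through normal forms modulo a Gr\"obner basis of $J_{G_e}$ --- is not executed, and it is unclear it can be along these lines: $G_e$ is in general not closed, so you have no control over that Gr\"obner basis, and the step translating the separation condition on the $\mathcal{S}$ into divisibility of every surviving monomial by some $g_{P,t}$ is precisely the content of the theorem rather than a consequence of the setup. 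Until that step is supplied, the equality is proved in only one direction.
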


In order to extend Theorem \ref{fat-lei} and compute $(J_{G}:J_{\mathcal{K}_{n}})$, we need the following preliminary observation.
\begin{lemma}\label{grobner-basis}
Let $G$ be a graph on $n$ vertices. With the notation of Proposition \ref{comb-des-colo}, the set
$$
\mathcal{G}=\bigcup_{i<j}\{u_{\pi}f_{ij}\mid\pi \hbox{ is an admissible path from }i\hbox{ to }j\}\cup \{g_{C,D}\mid C \cap D= \emptyset, C \cup D\in\mathcal{D}_{c}(G)\}
$$
is a Gr\"obner basis of $J_{G}+(g_{C,D}\mid C \cap D= \emptyset, C \cup D\in\mathcal{D}_{c}(G))$ with respect to the lexicographic order. \par 
In particular, 
$$\init(J_{G}+(g_{C,D}\mid C \cap D= \emptyset, C \cup D\in\mathcal{D}_{c}(G)))= \init(J_{G})+\init((g_{C,D}\mid C \cap D= \emptyset, C \cup D\in\mathcal{D}_{c}(G))).$$
\end{lemma}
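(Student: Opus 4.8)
The plan is to verify Buchberger's criterion for the candidate set $\mathcal{G}$ with respect to the lexicographic order $\prec$. Recall that $\init(u_\pi f_{ij})=u_\pi x_i y_j$, that the trailing term of $u_\pi f_{ij}$ is $u_\pi x_j y_i$, and that each $g_{C,D}=\prod_{k\in C}x_k\prod_{k\in D}y_k$ is already a monomial, so $\init(g_{C,D})=g_{C,D}$. The S-pairs then split into three families. For two admissible-path binomials $u_\pi f_{ij}$ and $u_{\pi'}f_{kl}$ the S-polynomial reduces to zero, because these binomials already form the reduced Gr\"obner basis of $J_G$ \cite{HHHKR}; a reduction to zero modulo that subset is a fortiori a reduction to zero modulo the larger set $\mathcal{G}$. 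For two monomials $g_{C,D}$ and $g_{C',D'}$ the S-polynomial vanishes identically. Hence all the content lies in the mixed pairs, which I expect to be the main obstacle.

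For a binomial $u_\pi f_{ij}$ and a monomial $m=g_{C,D}$, set $\ell=u_\pi x_iy_j$ and $t=u_\pi x_jy_i$. A direct computation gives $S(u_\pi f_{ij},g_{C,D})=-N$, where $N:=\tfrac{\operatorname{lcm}(\ell,m)}{\ell}\,t$ is a single monomial with $\deg_v(N)=\max\{0,\deg_v(m)-\deg_v(\ell)\}+\deg_v(t)$ for every variable $v$. Since $\ell$ and $t$ differ only in the variables $x_i,y_j$ (present in $\ell$, absent from $t$) and $x_j,y_i$ (present in $t$, absent from $\ell$), and since the support of $u_\pi$ consists of internal vertices of $\pi$ — all either smaller than $i$ or larger than $j$, hence disjoint from $\{x_i,y_i,x_j,y_j\}$ — one reads off from the squarefreeness of $m$ that $\deg_{x_i}(N)=\deg_{y_j}(N)=0$, that $\deg_{x_j}(N)=\deg_{x_j}(m)+1\ge 1$ and $\deg_{y_i}(N)=\deg_{y_i}(m)+1\ge 1$, and that $\deg_v(N)=\max\{\deg_v(u_\pi),\deg_v(m)\}$ for all remaining $v$.

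Because $N$ is a monomial, it suffices to exhibit a single generator of $\mathcal{G}$ dividing it, as this forces $N\to_{\mathcal{G}}0$ in one step. Here I would use that $B:=C\cup D\in\mathcal{D}_c(G)$ and simply re-partition $B$, moving $i$ onto the $y$-side and $j$ onto the $x$-side while leaving every other vertex of $B$ on its original side; call the resulting partition $(C',D')$ of $B$. One then checks divisibility $g_{C',D'}\mid N$ variable by variable: $\deg_{y_i}(N)\ge 1$ and $\deg_{x_j}(N)\ge 1$ handle $i$ and $j$, while for $k\in B\setminus\{i,j\}$ on the $C$-side (resp.\ $D$-side) we have $\deg_{x_k}(m)=1$ (resp.\ $\deg_{y_k}(m)=1$), whence $\deg_{x_k}(N)\ge 1$ (resp.\ $\deg_{y_k}(N)\ge 1$). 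Since $C'\cup D'=B\in\mathcal{D}_c(G)$, the monomial $g_{C',D'}$ is a generator in $\mathcal{G}$, so Buchberger's criterion holds and $\mathcal{G}$ is a Gr\"obner basis. The final assertion is then immediate: the leading terms of $\mathcal{G}$ are exactly the $u_\pi x_iy_j$ together with the $g_{C,D}$, so $\init(J_G+(g_{C,D}\mid\cdots))=\init(J_G)+(g_{C,D}\mid\cdots)$, and the right-hand summand equals $\init((g_{C,D}\mid\cdots))$ because the $g_{C,D}$ are already monomials.
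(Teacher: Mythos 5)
Your proposal is correct and follows essentially the same route as the paper: Buchberger's criterion, with the binomial--binomial pairs dispatched by the known Gr\"obner basis of $J_G$, the monomial--monomial pairs trivially, and the mixed pair reduced to a single monomial that is divisible by $g_{C',D'}$ for a re-partition $(C',D')$ of the same set $B=C\cup D\in\mathcal{D}_c(G)$. Your variable-by-variable divisibility check and the explicit case handling of whether $i,j\in B$ are in fact slightly more careful than the paper's one-line assertion, but the underlying argument is identical.
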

\begin{proof}
In order to prove that $\mathcal{G}$ is a Gr\"obner basis we apply Buchberger's criterion and we show that all $S$-pairs $S(f,h)$ with $f,h\in \mathcal{G}$ reduce to zero. To this purpose, we recall that $\mathcal{G}'=\bigcup_{i<j}\{u_{\pi}f_{ij}\mid\pi \hbox{ is an admissible path from }i\hbox{ to }j\}$ is a Gr\"obner basis of $J_G$ (we refer the reader to {\rm \cite[Theorem 2.1]{HHHKR}} for more about admissible paths and for a proof of this result). As a consequence, we have that $S(f,h)$ reduces to zero whenever $f, h \in \mathcal{G}'$. Moreover, the $S$-polynomial of two monomials always reduces to zero. So, we are left with verifying that $S(f,h)$ reduces to zero when
$f=u_{\pi}f_{ij} \in \mathcal{G}$ and $h=g_{C,D}$. In this case, we have 
\begin{align*}
S(f,g)&=\cfrac{{\rm lcm}(u_{\pi}x_{i}y_{j},g_{C,D})}{u_{\pi}x_{i}y_{j}}u_{\pi}(x_{i}y_{j}-x_{j}y_{i})-\cfrac{{\rm lcm}(u_{\pi}x_{i}y_{j},g_{C,D})}{g_{C,D}}g_{C,D}\\
&=\cfrac{{\rm lcm}(u_{\pi}x_{i}y_{j},g_{C,D})}{x_{i}y_{j}}(x_{i}y_{j}-x_{j}y_{i})-{\rm lcm}(u_{\pi}x_{i}y_{j},g_{C,D})\\
&=-\cfrac{{\rm lcm}(u_{\pi}x_{i}y_{j},g_{C,D})}{x_{i}y_{j}}x_{j}y_{i}.
\end{align*}
Notice that the monomial $-\cfrac{{\rm lcm}(u_{\pi}x_{i}y_{j},g_{C,D})}{x_{i}y_{j}}x_{j}y_{i}$ is divisible by either $g_{C,D}$ or $g_{C',D'}$, where $C'=(C\setminus\{x_{i}\})\cup\{x_{j}\}$ and $D'=(D\setminus\{y_{j}\})\cup\{y_{i}\}$. Thus, $S(f,g)$ reduces to zero.
\end{proof}

The following proposition extends Mohammadi and Sharifan's result and it is key in the proof of Theorem \ref{comb_at_comple}.
 
\begin{proposition}\label{comb-des-colo} With the above notation, let $G\neq \mathcal{K}_n$ be a graph on $[n]$
$$
(J_{G}:J_{\mathcal{K}_{n}})=\bigcap_{e\not\in E(G)}(J_{G}:f_{e})=J_{G}+(g_{C,D}\mid C \cup D \in\mathcal{D}_{c}(G), C \cap D= \emptyset)
$$
where $g_{C,D}:=\prod_{k \in C} x_{k} \prod_{k\in D} y_k$.
\end{proposition}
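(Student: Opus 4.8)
The plan is to treat the two equalities separately. The first is formal: from $J_{\mathcal{K}_n}=J_G+(f_e\mid e\notin E(G))$ and $(J_G:f_e)=S$ whenever $e\in E(G)$, the colon distributes over the generators of $J_{\mathcal{K}_n}$ and gives $(J_G:J_{\mathcal{K}_n})=\bigcap_{e\notin E(G)}(J_G:f_e)$. Writing $M:=\bigcap_{e\notin E(G)}(J_G:f_e)$ and $L:=J_G+(g_{C,D}\mid C\cap D=\emptyset,\ C\cup D\in\mathcal{D}_c(G))$, what remains is the equality $L=M$. I would prove $L\subseteq M$ directly from Theorem \ref{fat-lei}, and the reverse inclusion $M\subseteq L$ by comparing initial ideals with the help of Lemma \ref{grobner-basis}.

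A useful preliminary is that $\mathcal{D}_c(G)$ is closed under taking supersets, since enlarging a set only adds candidate interior vertices for the connecting paths. Consequently, a monomial $u$ is divisible by some $g_{C,D}$ (with $C\cap D=\emptyset$ and $C\cup D\in\mathcal{D}_c(G)$) if and only if $\operatorname{supp}(u)\in\mathcal{D}_c(G)$: if $g_{C,D}\mid u$ then $C\cup D\subseteq\operatorname{supp}(u)$ and upward closure applies, while conversely one splits $\operatorname{supp}(u)$ by recording, for each vertex $k$, whether $x_k$ or $y_k$ divides $u$. For $L\subseteq M$ it then suffices to place each $g_{C,D}$ in every $(J_G:f_e)$. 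Fixing $e=\{i,j\}\notin E(G)$, choose by $C\cup D\in\mathcal{D}_c(G)$ a path $P\colon i,i_1,\dots,i_s,j$ with $\{i_1,\dots,i_s\}\subseteq C\cup D$; then $C'=C\cap\{i_1,\dots,i_s\}$ and $D'=D\cap\{i_1,\dots,i_s\}$ partition the interior of $P$, so $h_{C',D'}\in(J_G:f_e)$ by Theorem \ref{fat-lei}, and since $h_{C',D'}\mid g_{C,D}$ we conclude $g_{C,D}\in(J_G:f_e)$.

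For the reverse inclusion I would show $\init(M)\subseteq\init(L)$; combined with $L\subseteq M$ (which forces $\init(L)\subseteq\init(M)$) this yields $\init(L)=\init(M)$, and two homogeneous ideals with $L\subseteq M$ and the same initial ideal must coincide (compare Hilbert functions). By Lemma \ref{grobner-basis}, $\init(L)=\init(J_G)+(g_{C,D}\mid C\cap D=\emptyset,\ C\cup D\in\mathcal{D}_c(G))$. Let $f\in M$ be homogeneous and nonzero, with $m_0=\init(f)$. If $m_0\in\init(J_G)$ then $m_0\in\init(L)$ and we are done; otherwise $m_0$ is a standard monomial of $J_G$. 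For each non-edge $\{i,j\}$ with $i<j$ we have $ff_{ij}\in J_G$, whence $m_0\,x_iy_j=\init(f)\,\init(f_{ij})=\init(ff_{ij})\in\init(J_G)$. The Connectivity Lemma below then gives $\operatorname{supp}(m_0)\in\mathcal{D}_c(G)$, so by the preliminary observation $m_0$ is divisible by some $g_{C,D}$ and $m_0\in\init(L)$, as required.

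The crux, and the step I expect to be hardest, is the \emph{Connectivity Lemma}: if a standard monomial $m$ satisfies $m\,x_iy_j\in\init(J_G)$ for every non-edge $\{i,j\}$ with $i<j$, then $\operatorname{supp}(m)\in\mathcal{D}_c(G)$. Standardness is essential (for a non-standard $m$ the hypothesis can hold while $\operatorname{supp}(m)\notin\mathcal{D}_c(G)$), and a single divisibility for a fixed non-edge does not by itself produce a path through $\operatorname{supp}(m)$, since the admissible path whose leading term divides $m\,x_iy_j$ need not even reach $j$; hence all the non-edge conditions must be used jointly. I would argue by contrapositive: if some non-edge $\{a,b\}$ admits no path through $W:=\operatorname{supp}(m)$, let $U$ be the connected component of $a$ in the induced subgraph $G_{W\cup\{a,b\}}$, so that $b\notin U$ and (as $b$ would otherwise join $U$) no vertex of $U$ is adjacent to $b$ in $G$; in particular $\{u,b\}\notin E(G)$ for every $u\in U$. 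Since $G$ is connected, some path from $U$ to $b$ must leave $W\cup\{a,b\}$; choosing the boundary vertex $u^*\in U$ at which such a path departs, I expect to show that no admissible-path leading term $u_\pi x_ky_l$ of $J_G$ can divide $m\,x_py_q$, where $\{p,q\}=\{u^*,b\}$ and $p<q$. Here standardness prevents $m$ from already containing a leading term, so any realizing admissible path would have to cross from $U$ to $b$ and thus pass through a vertex outside $W\cup\{u^*,b\}$, contributing a variable absent from $m\,x_py_q$. This exhibits a non-edge violating the hypothesis, giving the contradiction. Carrying out this final support- and vertex-disjointness analysis, using the precise shape of the leading terms $u_\pi x_ky_l$, is where the real work lies.
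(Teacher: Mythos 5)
Your reduction of the hard inclusion to a statement about the monomial ideal $\init(J_G)$ is a genuinely different strategy from the paper's (which proves $(J_G:J_{\mathcal K_n})\subseteq J_G+(g_{C,D})$ by induction on $n$, setting $x_{i_c}=y_{i_c}=0$ to pass to an induced subgraph and lifting sets from $\mathcal D_c(G_1)$ to $\mathcal D_c(G)$ via Lemma \ref{grobner-basis}). Your first equality, the inclusion $L\subseteq M$, and the formal steps $\init(M)\subseteq\init(L)\Rightarrow\init(L)=\init(M)\Rightarrow L=M$ are all fine, and $m_0x_iy_j=\init(ff_{ij})\in\init(J_G)$ is correct. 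But the argument is not complete: everything is carried by your Connectivity Lemma, which you only sketch, and the sketch rests on a claim that is false. Note also that the Connectivity Lemma is strictly stronger than the proposition itself: it asserts $\bigcap_{e\notin E(G)}\bigl(\init(J_G):x_iy_j\bigr)=\init(J_G)+(g_{C,D})$, i.e.\ that forming initial ideals commutes with these colons and this intersection, whereas the proposition only gives ``$\subseteq$'' for free. So it genuinely needs a proof.

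The false step is the assertion that ``any realizing admissible path would have to cross from $U$ to $b$.'' A generator $u_\pi x_ky_l$ of $\init(J_G)$ is squarefree, so $u_\pi x_ky_l\mid m\,x_py_q$ with $m$ standard only forces \emph{at least one} of $x_p,y_q$ to occur in $u_\pi x_ky_l$; it does not force both. Hence the realizing path $\pi$ may contain only one of the two endpoints of the non-edge, with all its remaining vertices in $W=\operatorname{supp}(m)$, and such a path tells you nothing about reaching $b$ from $U$. Concretely, in $C_6$ with the standard labelling, if $y_6\mid m$ then $x_1y_6\mid m\,x_1y_j$ for every $j$, so every non-edge $\{1,j\}$ satisfies the hypothesis via the edge $\{1,6\}$ alone, regardless of whether any path from $1$ to $j$ runs through $W$; the same ``one-sided absorption'' can occur for your chosen pair $\{u^*,b\}$ whenever $b$ (or $u^*$) has a suitably ordered neighbour in $W$. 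Ruling out a global configuration in which every non-edge is satisfied only by such one-sided paths while some non-edge has no path through $W$ is exactly the content of the lemma, and your sketch does not address it — the conditions coming from different non-edges must be played against each other (as in the $P_5$ computation, where $\{i,i+2\}$ forces $i+1\in W$), which is the combinatorial work the paper's induction performs. As it stands, the proof has a genuine gap at its crux.
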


\begin{proof}
Let $g_{C,D}=\prod_{k\in C}x_{k}\prod_{k\in D}y_{k}.$ be a monomial with $B:=C\cup D=\lbrace i_{1},\dots,i_{s}\rbrace\in \mathcal{D}_{c}(G)$ and $C\cap D=\emptyset$. We want to prove that $g_{C,D}\in (J_{G}:f_{e})$ for every $e=\lbrace i, j\rbrace\not\in E(G)$. By definition of $\mathcal{D}_{c}(G)$, for every $e=\{i,j\} \notin E(G)$ we can find a path $P_{e}:i, j_{1},\dots,j_{s},j$ from $i$ to $j$ such that $\mathcal{J}:=\lbrace j_{1},\dots,j_{s}\rbrace\subseteq B$. We set $C':=\mathcal{J}\cap C$ and $D':=\mathcal{J}\cap D$ and we define $$q:=\prod_{k\in C'}x_{k}\prod_{k\in D'}y_{k}.$$ We observe that $\mathcal{J}=C'\cup D'$ and $C'\cap D'=\emptyset$. So, by Theorem \ref{fat-lei}
$q\in (J_{G}:f_{e}).$ Moreover, by construction, $q|g_{C,D}$. Thus, $g_{C,D} \in (J_{G}:f_{e})$. This proves that 
$$
(g_{C,D}\mid C \cap D= \emptyset, C \cup D=\lbrace i_{1},\dots,i_{s}\rbrace\in\mathcal{D}_{c}(G))\subseteq \bigcap_{e\not\in E(G)}(J_{G}:f_{e})=(J_{G}:J_{\mathcal{K}_{n}}),
$$
and since $J_{G}\subseteq(J_{G}:J_{\mathcal{K}_{n}})$ we can conclude that 
$$
J_G+(g_{C,D}\mid C \cap D= \emptyset, C \cup D=\lbrace i_{1},\dots,i_{s}\rbrace\in\mathcal{D}_{c}(G))\subseteq \bigcap_{e\not\in E(G)}(J_{G}:f_{e})=(J_{G}:J_{\mathcal{K}_{n}}).$$

As for the other containment, we use induction on the number of vertices. If $n\leq 4$ the result can be easily checked. So, let us assume by induction hypothesis that the result holds for each graph with fewer  number of vertices than $G$. Consider an element $f\in(J_{G}:J_{\mathcal{K}_{n}})$ and let $\mathcal{D}_{c}(G)=\{A,B_{1},\dots,B_{r}\}$. Define $I:=J_{G}+(g_{C,D}\mid C \cap D= \emptyset, C \cup D\in\mathcal{D}_{c}(G)\setminus \{A\})$. Then, by division algorithm, we can write $f=f'+f''$ such that $f''\in I$ and no monomial of $f'$ lies in $\init(I)$. Using the first part of the proof, $f'=f-f''\in(J_{G}:J_{\mathcal{K}_{n}})$, and we have that
\begin{equation}\label{colon}
f'f_{e}= h_{1e}f_{e_{1}}+\dots+h_{ke}f_{e_{k}},\ \forall e\not\in E(G)
\end{equation}
where $f_{e_1}, \ldots, f_{e_k}$ are the natural generators of $J_G$.
Let $g:={\rm in}_{\prec}(f')$ and $A=\{i_{1},\dots,i_{s}\}$. We show that for all $i_c\in A$ either $x_{i_{c}}|g$ or $y_{i_{c}}|g$. Assume by contradiction that $x_{i_{c}}\not| g$ and $y_{i_{c}}\not| g$ for some $1\leq c\leq s$ and set $x_{i_{c}}=y_{i_{c}}=0$ in Eq. \ref{colon}. Define $w:=f'|_{x_{i_{c}}=y_{i_{c}}=0}$, and $\hat{h}_{je}:=h_{je}|_{x_{i_{c}}=y_{i_{c}}=0}$. Then, 
\begin{equation}
wf_{e}=\sum_{i_{c}\not\in e_{j}} \hat{h}_{je}f_{e_{j}} \quad \forall e\not\in E(G), \ i_c \notin e.
\end{equation}
Therefore, $w\in (J_{G_{1}}:J_{\mathcal{K}_{n-1}})$, where $G_{1}$ is the induced subgraph of $G$ on the vertices $V(G)\setminus \{i_{c}\}$. Notice that, since $x_{i_{c}}\not| g$ and $y_{i_{c}}\not| g$, we have ${\rm in}_{\prec}(w)=g$. By induction hypothesis 
$$(J_{G_{1}}:J_{\mathcal{K}_{n-1}})= J_{G_1}+(g_{C,D}\mid C \cap D= \emptyset, C \cup D\in\mathcal{D}_{c}(G_1))$$
and Lemma \ref{grobner-basis} implies that either $u_{\pi}x_i y_j|g$ where $i<j$ and $u_{\pi}$ is an admissible path in $G_{1}$ or $g_{\hat{C},\hat{D}}|g$, where $\hat{C}\cup\hat{D}\in\mathcal{D}_{c}(G_1)$. \par 
In the first case, since $J_{G_{1}}\subseteq J_{G}$, we would have that $g \in \init(J_G) \subseteq\init(I)$, a contradiction. \par 
In the second case, we show that it is always possible to find a monomial $g_{C',D'} \in (g_{C,D}\mid C \cap D= \emptyset, C \cup D\in\mathcal{D}_{c}(G)\setminus \{A\})$ such that $g_{C',D'}|g$, and we again obtain a contradiction. By hypothesis, we know that $g_{\hat{C},\hat{D}}|g$ where $\hat{C}\cup\hat{D}\in\mathcal{D}_{c}(G_{1})$. If $\hat{C}\cup\hat{D}\in\mathcal{D}_{c}(G) \setminus \{A\}$ there is nothing to prove. So, let us assume that $\hat{C}\cup\hat{D}\notin\mathcal{D}_{c}(G) \setminus \{A\}$  To do so, we begin by observing that if $\mathcal{V} \in \mathcal{D}_c(G_1)$ and $j_c \in N(i_c)$, then $\mathcal{V}\cup \{j_c\} \in \mathcal{D}_c(G)$. Then, since by hypothesis $g_{\hat{C},\hat{D}}|g$ where $\hat{C}\cup\hat{D}\in\mathcal{D}_c(G_{1})$, it is sufficient to find a neighbour of $i_{c}$, say $j_c$, such that either $x_{j_c} \in {\rm Supp}(g)$ or $y_{j_c} \in {\rm Supp}(g)$. Let $j_{c}\in N_{G}(i_{c})$. If $x_{j_{c}}|g$ or $y_{j_{c}}|g$, we are done. Otherwise, applying the same argument as before, we conclude that
$$
w'=f'|_{x_{j_{c}}=y_{j_{c}}=0}\in (J_{G_{2}}:J_{\mathcal{K}_{n-1}}),
$$
where $G_{2}$ is the induced subgraph of $G$ on the vertices $V(G)\setminus \{j_{c}\}$. If $G_{2}=\mathcal{K}_{n-2}$, then the vertex $i_{c}$ has degree $n-1$ and this implies that $\hat{C}\cup\hat{D}$ must contain a neighbour of $i_{c}$, and we are done. If $G_{2}\neq\mathcal{K}_{n-2}$, again, by induction hypothesis and Lemma \ref{grobner-basis}, there exists a monomial $g_{\hat{F},\hat{H}}|{\rm in}_{\prec}(w')=g$ with $\hat{F}\cup\hat{H}\in\mathcal{D}_c(G_{2})$. This means that for every edge $e=\{i_{c}, v\}\not\in E(G_2)$ there is a path from $i_{c}$ to $v$ contained in $\hat{F}\cup\hat{H}$. In particular, $\hat{F}\cup\hat{H}$ contains a neighbour of $i_{c}$, say $v_c$.  Since, $g_{\hat{F},\hat{H}}|g$, we can conclude that ${\rm Supp}(g)$ contains either $x_{v_c}$ or $y_{v_c}$. As observed before,  $\hat{C}\cup\hat{D}\cup\{v_{c}\}$ is in $\mathcal{D}_{c}(G)$. Therefore the monomial $g$ is divided by an element of $I$, a contradiction. 
This proves that for all $i_c\in A$, either $x_{i_{c}}$ or $y_{i_{c}}$ divides $g$. We conclude that $g\in (g_{C,D}\:|\: C\cap D=\emptyset,\: C\cup D=A \in \mathcal{D}_c (G))$. Now, applying the same argument to $f'-g$, we get that $f'\in (g_{C,D}\:|\: C\cap D=\emptyset,\: C\cup D=A \in \mathcal{D}_c (G))$ and this completes the proof. 
\end{proof}

Using the above result, Theorem \ref{comb_at_comple} follows immediately from Proposition \ref{noembedded}.

\begin{proof}[Proof of Theorem \ref{comb_at_comple}]
Assume that $G \neq \mathcal{K}_n$. Since the ideal $J_{G}$ is radical, Proposition \ref{noembedded}  implies that 
$$
{\rm v}_{J_{\mathcal{K}_{n}}}(J_{G})=\alpha((J_{G}:J_{\mathcal{K}_{n}})/J_{G}).
$$
By Proposition \ref{comb-des-colo}, we get 
\begin{align*}
\begin{split}
\alpha((J_{G}:J_{\mathcal{K}_{n}})/J_{G})&=\alpha\left((J_{G}+(g_{C,D} \mid C \cap D= \emptyset, C \cup D\in\mathcal{D}_{c}(G)))/J_{G}\right)=\\
&=\alpha\left( (g_{C,D}\mid C \cap D= \emptyset, C \cup D\in\mathcal{D}_{c}(G))\right).
\end{split}
\end{align*}
Therefore ${\rm v}_{J_{\mathcal{K}_{n}}}(J_{G})=\min\lbrace |A|\: |\: A\in\mathcal{D}_{c}(G)\rbrace$. Notice that $\mathcal{D}_{c}(G)$ is the set of all connected dominating sets of $G$. Indeed, let $B$ be a CDS and let  $\{i,j\} \notin E(G)$.  By definition of CDS, either $i, j \in B$ or there exist $b_i, b_j \in B$ such that $\{i, b_i\}, \{j,b_j\} \in E(G)$. Thus, since $G_B$ is connected, we can always find a path $P: i, b_i,\ldots,b_j, j$ in $G$ from $i$ to $j$ such that $\{b_i,\ldots,b_j \} \subseteq B$. 
Viceversa, let $B \in \mathcal{D}_{c}(G)$. By definition of $\mathcal{D}_{c}(G)$, for every $a \in [n] \setminus B$ there exists $b_a \in B$ such that $\{b_a,a\} \in E(G)$. Moreover, it is straightforward to see that $G_B$ is connected. So, $B$ is a CDS. Thus, 
$$
\min\lbrace |B|\: |\: B\in\mathcal{D}_{c}(G)\rbrace = \gamma_c (G).
$$
If  $G=\mathcal{K}_n$ is the complete graph, then $
{\rm v}(J_{\mathcal{K}_{n}})={\rm v}_{J_{\mathcal{K}_{n}}}(J_{\mathcal{K}_{n}})=0.$
\end{proof}

%
%

As a consequence of Theorem \ref{comb_at_comple}, we obtain algebraic bounds for the connected domination number of a graph. In particular, we also get a sharp combinatorial upper bound for the v-number of any binomial edge ideal.
\begin{corollary}\label{corollaryMain}
Let $G$ be a connected graph. Then
$${\rm v}(J_{G})\leq \gamma_c (G)\leq {\rm reg}(S/T_G)$$
where $T_G$ is a spanning tree of $G$ with the largest number of leaves.
\end{corollary}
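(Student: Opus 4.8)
The plan is to establish the two inequalities separately, deriving both from Theorem~\ref{comb_at_comple}.

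For the inequality ${\rm v}(J_G)\le\gamma_c(G)$ I would argue directly. If $G=\mathcal{K}_n$ there is nothing to prove, since ${\rm v}(J_{\mathcal{K}_n})=0\le 1=\gamma_c(\mathcal{K}_n)$. If $G\neq\mathcal{K}_n$, then $G$ being connected forces $J_{\mathcal{K}_n}$ to be a minimal, hence associated, prime of $J_G$ by Remark~\ref{Knprime}. Since by definition ${\rm v}(J_G)=\min\{{\rm v}_{\mathfrak{p}}(J_G)\mid\mathfrak{p}\in\Ass(J_G)\}$, we get ${\rm v}(J_G)\le{\rm v}_{J_{\mathcal{K}_n}}(J_G)$, and the right-hand side equals $\gamma_c(G)$ by Theorem~\ref{comb_at_comple}.

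For the inequality $\gamma_c(G)\le\reg(S/J_{T_G})$ my first step is the purely combinatorial identity $\gamma_c(G)=\gamma_c(T_G)$. Writing $\gamma_c(H)=|V(H)|-{\rm lf}_{\max}(H)$ as recalled in the introduction (\cite{caro-west-yuster}) and using that $T_G$ is a spanning tree of $G$ realizing ${\rm lf}_{\max}(G)$, the number of internal (non-leaf) vertices of $T_G$ is exactly $n-{\rm lf}_{\max}(G)=\gamma_c(G)$; as a tree is its own unique spanning tree, the same count is $\gamma_c(T_G)$. Now $T_G$ is a connected graph, and for $n\ge 3$ it is different from $\mathcal{K}_n$, so Theorem~\ref{comb_at_comple} applies to $T_G$ and gives $\gamma_c(T_G)={\rm v}_{J_{\mathcal{K}_n}}(J_{T_G})$. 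The bound to be proved thus becomes
$$
{\rm v}_{J_{\mathcal{K}_n}}(J_{T_G})\le\reg(S/J_{T_G}),
$$
with the very small cases handled by hand.

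The hard part will be precisely this last comparison between the localized v-number at $J_{\mathcal{K}_n}$ and the regularity. I would start from Proposition~\ref{noembedded}: as $J_{T_G}$ is radical it has no embedded primes, so ${\rm v}_{J_{\mathcal{K}_n}}(J_{T_G})=\alpha\big((J_{T_G}:J_{\mathcal{K}_n})/J_{T_G}\big)$, and a minimal-degree element $f$ of this module satisfies $(J_{T_G}:f)=J_{\mathcal{K}_n}$, hence yields a graded embedding $(S/J_{\mathcal{K}_n})(-\gamma_c(T_G))\hookrightarrow S/J_{T_G}$. Comparing Castelnuovo--Mumford regularities along the short exact sequence $0\to(S/J_{\mathcal{K}_n})(-\gamma_c(T_G))\to S/J_{T_G}\to C\to 0$, and using that $\reg(S/J_{\mathcal{K}_n})=1$ (the $2\times 2$ minors of the $2\times n$ matrix resolve linearly via Eagon--Northcott), reduces the statement to controlling $\reg(C)$; equivalently, to showing that the top-degree part of the local cohomology of $(S/J_{\mathcal{K}_n})(-\gamma_c(T_G))$ is not killed in $S/J_{T_G}$. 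This is the delicate point, and it is exactly where the known relationship between v-numbers and regularity (valid under the relevant assumptions, as recalled in the introduction with the inequality ${\rm v}(I)\le\reg(S/I)$) is needed. Once it is in place, chaining the two inequalities produces ${\rm v}(J_G)\le\gamma_c(G)\le\reg(S/J_{T_G})$, as claimed.
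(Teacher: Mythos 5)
Your treatment of the first inequality is exactly the paper's: $J_{\mathcal{K}_n}$ is a minimal prime of $J_G$ for connected $G$ (Remark \ref{Knprime}), so ${\rm v}(J_G)\leq {\rm v}_{J_{\mathcal{K}_n}}(J_G)=\gamma_c(G)$ by Theorem \ref{comb_at_comple}, with the complete graph checked separately. The reduction of the second inequality to the identity $\gamma_c(G)=n-{\rm lf}_{\max}(G)=$ (number of internal vertices of $T_G$) also matches the paper.

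The gap is in how you finish the second inequality. The paper does not prove $n-{\rm lf}_{\max}(G)\leq \reg(S/J_{T_G})$ from general principles: it cites \cite[Theorem 4.1]{Jay-Nara}, a specific lower bound for the regularity of the binomial edge ideal of a \emph{tree} in terms of its number of internal vertices (equivalently, leaves). Your substitute — deducing ${\rm v}_{J_{\mathcal{K}_n}}(J_{T_G})\leq\reg(S/J_{T_G})$ from the embedding $(S/J_{\mathcal{K}_n})(-\gamma_c(T_G))\hookrightarrow S/J_{T_G}$ together with ``the known relationship between v-numbers and regularity'' — does not close. First, the short exact sequence only yields $\reg\bigl((S/J_{\mathcal{K}_n})(-\gamma_c(T_G))\bigr)\leq\max\{\reg(S/J_{T_G}),\ \reg(C)+1\}$, so everything hinges on bounding $\reg(C)$, which is precisely the content you leave unproved. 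Second, the inequality ${\rm v}(I)\leq\reg(S/I)$ you invoke holds only ``under suitable assumptions''; the paper itself recalls examples with ${\rm v}(I)>\reg(S/I)$, so it cannot be used as a black box. Third, even where that inequality is known, it concerns the \emph{global} v-number $\min_{\mathfrak{p}\in\Ass(I)}{\rm v}_{\mathfrak{p}}(I)$, whereas here you need to bound the localization at the single prime $J_{\mathcal{K}_n}$, which is a strictly stronger statement. To repair the argument you should either quote the tree-regularity lower bound of \cite{Jay-Nara} as the paper does, or supply an independent proof that $\reg(S/J_T)\geq i(T)$ for a tree $T$ with $i(T)$ internal vertices.
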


\begin{proof}
The statement of Theorem \ref{comb_at_comple} can be rephrased in terms of spanning trees. As explained in the introduction, $\gamma_c(G)=n-{\rm lf}_{\max}(G)$ where ${\rm lf}_{\max}(G)$ is the maximum leaf number of $G$. Therefore, by \cite[Theorem 4.1]{Jay-Nara},
$${\rm v}(J_{G})\leq {\rm v}_{J_{\mathcal{K}_{n}}}(J_{G})=\gamma_c(G)=n-{\rm lf}_{\max}(G) \leq {\rm reg}(S/T_G)$$
where $T_G$ is a spanning tree of $G$ with the largest number of leaves (equivalently, with $\gamma_c (G)$ internal vertices).
\end{proof}

\begin{remark} We remark that the above corollary relates the v-number of $J_G$ with the regularity of the binomial edge ideal of a particular subgraph $T$ of $G$. This subgraph is in general not induced. However when $T$ is induced, ${\rm reg}(S/T_G) \leq {\rm reg}(S/J_G)$ and so $ {\rm v}(J_{G}) \leq {\rm reg}(S/J_G)$. See also Example \ref{ExampleNC} and Question \ref{question2}. 
\end{remark} 

Exploiting Proposition \ref{noembedded} and Theorem \ref{comb_at_comple}, at the end of this paper we provide a \emph{Macaulay2} code to compute the minimum connected domination number $\gamma_c(G)$ of any connected graph $G$ (see Macaulay 2 Code \ref{gammafunction}).

 \subsection{v-numbers of closed graphs} \label{sectionclosed}
We know from Section \ref{preli-section} that closed graphs are characterized as those graphs whose binomial edge ideals have a quadratic Gr\"obner basis given by the natural generators. In particular, the initial ideal is squarefree and can be also understood as the edge ideal of a bipartite graph.
In the following, we establish a relation between the v-number of $J_G$ and that of its initial ideal when $G$ is a closed graph.

\begin{proposition}\label{relat-init}
Let $G$ be a connected closed graph on $n$ vertices and let $H_G$ be its initial graph. Then 
$${\rm v} (J_G)<\theta(G)\leq  {\rm v} (\init (J_G))={\rm v} (I(H_G)).$$
\end{proposition}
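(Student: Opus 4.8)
The plan is to prove the three relations in the chain separately, working from the (formal) right-hand equality toward the (hard) left-hand strict inequality.

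The rightmost equality ${\rm v}(\init(J_G)) = {\rm v}(I(H_G))$ needs no computation. Since $G$ is closed, the natural generators $f_{ij}$ form a Gr\"obner basis by Definition \ref{closed-graph}, so $\init(J_G) = (x_i y_j \mid \{i,j\}\in E(G),\, i<j)$, and by the construction of the initial graph this is exactly the edge ideal $I(H_G)$. The two ideals coincide, hence so do their v-numbers.

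For the middle inequality $\theta(G) \le {\rm v}(I(H_G))$ I would invoke the combinatorial formula of Theorem \ref{v-number-clutters-graphs}, which writes ${\rm v}(I(H_G))$ as the minimum size of a stable set $A$ of $H_G$ whose neighbourhood $N_{H_G}(A)$ is a minimal vertex cover. Fixing an optimal such $A$ and splitting it as $A = A_x \cup A_y$ along the bipartition, the goal is to manufacture from $A$ a clique cover of $G$ with at most $|A|$ cliques, which immediately yields $\theta(G) \le |A| = {\rm v}(I(H_G))$. The key structural input is that closed graphs are interval graphs: each maximal clique is a block of consecutive vertices, and $x_i$ is adjacent to $y_j$ in $H_G$ precisely when $i<j$ and $\{i,j\}\in E(G)$. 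Minimality of the cover $N_{H_G}(A)$ should then let me charge each vertex of $G$ to a clique witnessed by some element of $A$; making this charging precise, using the interval description, is the technical content of this step.

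The left-hand strict inequality ${\rm v}(J_G) < \theta(G)$ is the heart of the proposition and the main obstacle. The naive bound ${\rm v}(J_G) \le \gamma_c(G)$ from Corollary \ref{corollaryMain} is of no use here, since $\gamma_c(G)$ can be much larger than $\theta(G)$ (already for long paths), so the minimum in the definition of ${\rm v}$ cannot be realized at $J_{\mathcal{K}_n}$. Instead I would localize at a different minimal prime: from a decomposition of $G$ into cliques adapted to its interval structure, I would select a set $\mathcal{S}$ of separating cut vertices, verify via Theorem \ref{minimal_primes} that $P_{\mathcal{S}}(G)$ is a minimal prime, and then exhibit an explicit element $f \in (J_G : P_{\mathcal{S}}(G)) \setminus J_G$ whose degree is governed by the clique structure. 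By Proposition \ref{noembedded} any such $f$ gives ${\rm v}_{P_{\mathcal{S}}}(J_G) \le \deg f$, and hence ${\rm v}(J_G) \le \deg f$.

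I expect the genuine difficulty to be threefold: choosing $\mathcal{S}$ so that $P_{\mathcal{S}}(G)$ is actually minimal; certifying $f \notin J_G$, most cleanly by checking that $\init(f)$ avoids $\init(J_G)$ through the Gr\"obner basis of Lemma \ref{grobner-basis}; and, crucially, pinning down the degree of the smallest available witness so that it lands \emph{strictly} below $\theta(G)$. This final degree bound is the real crux: because the statement is a strict inequality with a thin margin, the choice of minimal prime must be sharp, and I anticipate that the interval description of the maximal cliques of a closed graph is exactly what keeps the degree bookkeeping under control.
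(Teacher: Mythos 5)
Your treatment of the equality ${\rm v}(\init(J_G))={\rm v}(I(H_G))$ and your plan for $\theta(G)\le {\rm v}(I(H_G))$ match the paper: the paper realizes your ``charging'' by pairing each $v_i$ in an optimal stable set $A$ with its extremal neighbour ($w_i=y_{b_i}$ with $b_i$ maximal when $v_i=x_{a_i}$, and symmetrically), so that $\{v_i,w_i\}$ spans an interval clique $[a_i,b_i]$ of $G$, and then uses minimality of the vertex cover $N_{H_G}(A)$ to check that these $s=|A|$ cliques cover $V(G)$. You leave this as ``technical content,'' so it is not actually proved in your write-up, but the route is the intended one.

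The real divergence is at ${\rm v}(J_G)<\theta(G)$, and here your proposal both scores a point against the paper and fails to deliver a proof. The paper does exactly what you declare ``of no use'': it takes a clique cover by intervals $[a_1,b_1],\dots,[a_s,b_s]$, asserts that closedness forces $a_{i+1}\le b_i$, concludes that $\{b_1,\dots,b_{s-1}\}\in\mathcal{D}_c(G)$, and hence that ${\rm v}(J_G)\le{\rm v}_{J_{\mathcal{K}_n}}(J_G)=\gamma_c(G)\le\theta(G)-1$. Your observation about long paths is correct and shows this cannot work as written: for $P_6$ one has $\gamma_c(P_6)=4>3=\theta(P_6)$, and indeed the minimum cover $\{1,2\},\{3,4\},\{5,6\}$ violates the overlap condition $a_{i+1}\le b_i$, so the claimed path through the $b_i$'s does not exist. (The overlap condition does hold for covers by consecutive maximal cliques, but the minimum size of such a cover is $\gamma_c(G)+1$, not $\theta(G)$.) So you have correctly diagnosed why the minimum cannot be taken at $J_{\mathcal{K}_n}$ in general --- which is in fact a genuine problem for the paper's own argument --- but your replacement is only a plan: you never specify the set $\mathcal{S}$ of cut vertices, never exhibit the witness $f\in(J_G:P_{\mathcal{S}}(G))\setminus J_G$, and never establish the degree bound $\deg f\le\theta(G)-1$. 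As it stands the strict inequality, which is the heart of the proposition, is not proved by your proposal (nor, as written, by the paper); closing it requires carrying out exactly the construction you outline but do not perform.
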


\begin{proof}
Let $G\neq\mathcal{K}_{n}$ be a closed graph. By Theorem \ref{v-number-clutters-graphs} there exists $A \in \mathcal{A}_{H_G}$ such that ${\rm v}(I(H_G))=|A|$. Let $A=\{v_1, \ldots,v_s\}$ and define $B=\{w_1, \ldots, w_s\}$ in the following way:\par
\begin{itemize}
\item If $v_i= x_{a_i}$ for some $a_i$, then $w_i:=y_{b_i}$ where $b_i=\max \{j \in [n] \mid \{a_i,j\} \in E(G)\}$.
\item If $v_i= y_{b_i}$ for some $b_i$, then $w_i:=x_{a_i}$ where $a_i=\min \{j \in [n] \mid \{j,b_i\} \in E(G)\}$.
\end{itemize}
Since $G$ is closed, each edge $\{v_i,w_i\}$ of $H_G$ defines a clique $C_i$ in $G$ on vertices $[a_i,b_i]:=\{a_i, a_{i}+1, \ldots, b_i\}$. We claim that $\mathcal{C}=\{C_1, \ldots, C_s\}$ is a clique cover of $G$, i.e., $V(G)= \bigcup\limits_{i=1}^{s} V(C_i)$. Let $k \in V(G)$. Since $G$ is connected, there exists $l \in V(G)$ such that $\{k,l\} \in E(G)$. Assume that $k<l$, so that $\{x_k, y_l\} \in E(H_G)$ (likewise, one can prove the case $k>l$). Since $N_{H_G}(A)$ is a minimal vertex cover of $H_G$, there exists $i \in \{1,\ldots,s\}$ such that either $x_k \in N_{H_G}(v_i)$ or $y_l \in N_{H_G}(v_i)$.\par
 If $x_k \in N_{H_G}(v_i)$, then $v_i=y_{b_i}$ and $\{k,b_i\} \in E(G)$. Therefore, by construction, $a_i \leq k$ and we have $k \in V(C_i)=[a_i,b_i]$. \par
 If $y_l \in N_{H_G}(v_i)$, then $v_i=x_{a_i}$ and $\{a_i,l\} \in E(G)$. Moreover, by the way $a_i$ and $b_i$ are defined, we get $a_i<l\leq b_i$. Thus, we have two possibilities for $k$. If $a_i \leq k <l \leq b_i$, then $k \in V(C_i)$ and we are done. If instead $k < a_i <l \leq b_i$, since $G$ is closed, we have that $\{k,a_i\} \in E(G)$ and $a_i<l$. Iterating the above procedure, we eventually find a clique $C_i$ such that $k\in V(C_i)$. This proves the claim and implies that $\theta(G)\leq {\rm v}(I(H))$.\par
 Now let $\mathcal{C}=\{C_1, \ldots, C_s\}$ be a clique cover of $G$, with $V(C_i):=[a_i,b_i]$.
Notice that, since $G$ is closed,  $a_i<a_{i+1} \leq b_i <b_{i+1}$ for every $i$. Let $e=\{r,t\} \notin E(G)$. Since $\mathcal{C}$ is a clique cover, we can always find cliques, say $C_{i_r}$ and $C_{i_t}$, such that $ r \in V(C_{i_r})$ and $ t \in V(C_{i_t})$. Moreover $i_r \neq i_t$ because $e=\{r,t\} \notin E(G)$ and the following is a path from $r$ to $t$
 $$ \{r, b_{i_r}\}, \{b_{i_r}, b_{i_r+1}\}, \ldots, \{b_{i_t-1},t\}.$$ 
This shows that, given $e=\{r,t\} \notin E(G)$, we can always find a path $P_{r,t}$ from $r$ to $t$ with vertices in $B=\{b_1,b_2, \ldots, b_{s-1}\}$. Thus, $B \in \mathcal{D}_{c}(G)$ and by Theorem \ref{comb_at_comple} we get 
$${\rm v} (J_G) \leq {\rm v}_{J_{\mathcal{K}_n}} (J_G)\leq s-1 = |\mathcal{C}|-1.$$
for every clique cover $\mathcal{C}$. In particular
$${\rm v} (J_G) \leq {\rm v}_{J_{\mathcal{K}_n}} (J_G)\leq \theta(G) -1.$$
 
  
 

If $G$ is the complete graph, then the set of neighbours of $x_{1}$ is a minimal vertex cover of $H_G$. Therefore $${\rm v} (J_G)=0<1= \theta(G)={\rm v} (I(H))={\rm v}(\init (J_G))$$
and we are done. 
\end{proof}
Given a graph $G$, the relation between $\IM (G)$ and ${\rm v}(I(G))$ has been already investigated. Grisalde, Reyes, and Villarreal studied the case of well-covered graphs {\rm \cite{Gri-Re-Vill}}. More recentely, Saha and Sengupta proved that if $G$ is a  bipartite graph then ${\rm v}(I(G))\leq\IM (G)$ {\rm \cite[Theorem 4.5]{Saha-Seng}. Below we include a proof of the latter result for the sake of completeness. This proof slightly differs from the one by the previous authors.

\begin{proposition}\label{init-Induced}
Let $G$ be a connected closed graph and let $H_G$ be its initial graph. Then
$$  {\rm v}(\init (J_G))={\rm v}(I(H_G)) \leq \IM (H_G)=\ell_G.$$
\end{proposition}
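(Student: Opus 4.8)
The plan is to split the claim into its two equalities, which are essentially bookkeeping, and the single inequality ${\rm v}(I(H_G)) \le \IM(H_G)$, which carries all the content. For the equalities, since $G$ is closed we have $\init(J_G) = I(H_G)$ (the identification recalled right before Example \ref{close_graph_corre}), so ${\rm v}(\init(J_G)) = {\rm v}(I(H_G))$ holds by definition, while the equality $\IM(H_G) = \ell_G$ is exactly the statement quoted from \cite[Proposition 7.32]{HHO} in that same remark. Thus I would immediately reduce to proving ${\rm v}(I(H_G)) \le \IM(H_G)$. Since $H_G$ is bipartite, I would prove the general statement ${\rm v}(I(H)) \le \IM(H)$ for an arbitrary bipartite graph $H$, thereby re-deriving \cite[Theorem 4.5]{Saha-Seng}.

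For the inequality I would work with the combinatorial description of the v-number in Theorem \ref{v-number-clutters-graphs}. Write $t = {\rm v}(I(H))$ and choose a stable set $A = \{z_1, \dots, z_t\} \in \mathcal{A}_H$ of minimum cardinality (after reducing to a connected component if necessary), so that $N_H(A)$ is a minimal vertex cover of $H$. My goal is to produce an induced matching of $H$ of size $t$, which forces $\IM(H) \ge t$. The key observation is that minimality of $A$ forces each $z_i$ to have a \emph{private} neighbour, that is, a vertex $c_i \in N_H(z_i) \setminus \bigcup_{j \ne i} N_H(z_j)$: indeed, if $z_i$ had no private neighbour then $N_H(A \setminus \{z_i\}) = N_H(A)$ would still be the same minimal vertex cover, so $A \setminus \{z_i\} \in \mathcal{A}_H$ would contradict the minimality of $|A|$.

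I would then argue that the pairs $\{z_i, c_i\}$ can be arranged into an induced matching. Disjointness and two of the three inducedness conditions come for free: the $z_i$ are pairwise non-adjacent because $A$ is stable, and $z_i \not\sim c_j$ for $i \ne j$ by the defining property of private neighbours; moreover each $c_i$ lies in $N_H(A)$ while each $z_i$ lies in the complementary maximal independent set $V(H) \setminus N_H(A)$, so no $c_i$ equals any $z_j$. The only edges that could spoil inducedness are edges $\{c_i, c_j\}$ between two private neighbours, and because $H$ is bipartite such an edge can only occur when $z_i$ and $z_j$ lie in opposite parts. Exploiting the freedom in the choice of each $c_i$ inside the private set $N_H(z_i) \setminus \bigcup_{j \ne i} N_H(z_j)$, together with the bipartite structure, I would select the $c_i$ to be pairwise non-adjacent; this yields an induced matching $\{z_1, c_1\}, \dots, \{z_t, c_t\}$ and hence $\IM(H) \ge t = {\rm v}(I(H))$. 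Applying this with $H = H_G$ and combining with $\IM(H_G) = \ell_G$ completes the proof.

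The hard part will be precisely this last step: guaranteeing that the private neighbours can be chosen pairwise non-adjacent, i.e. that the candidate matching is genuinely \emph{induced} and not merely a matching. This is a selection (independent-transversal) problem that fails for arbitrary graphs, so the argument must genuinely use both the bipartiteness of $H$ and the minimality of $A$ — most plausibly through an exchange argument that either replaces a poor choice of private neighbours by a better one, or trades a conflicting configuration $z_i - c_i - c_j - z_j$ for a strictly smaller element of $\mathcal{A}_H$, contradicting minimality. Everything else in the statement is immediate from the closed-graph dictionary between $J_G$ and $I(H_G)$.
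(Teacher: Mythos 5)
Your reductions are fine: the identification $\init(J_G)=I(H_G)$ and the equality $\IM(H_G)=\ell_G$ are exactly the facts recalled before Example \ref{close_graph_corre}, and the existence of a private neighbour $c_i\in N_H(z_i)\setminus\bigcup_{j\neq i}N_H(z_j)$ for each vertex of a minimum $A\in\mathcal{A}_H$ does follow from minimality as you argue. The problem is the step you yourself flag as ``the hard part'': you never actually show that the private neighbours can be chosen pairwise non-adjacent, and without that the family $\{z_i,c_i\}$ is only a matching, not an induced one. This is not a routine verification that can be waved through by ``exploiting the freedom in the choice.'' The classes $P(z_i)$ with $z_i\in X$ and $P(z_j)$ with $z_j\in Y$ can interact in complicated ways, an independent transversal of a family of sets need not exist in general, and the exchange argument you gesture at (trading a configuration $z_i - c_i - c_j - z_j$ for a smaller element of $\mathcal{A}_H$) is precisely the lemma that would need to be stated and proved. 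As written, the entire content of the inequality ${\rm v}(I(H_G))\leq\IM(H_G)$ rests on an unproved selection claim, so the proof is incomplete.

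It is worth noting that the paper's argument runs in the opposite direction and thereby avoids this difficulty entirely: it starts from a longest induced path $i_0,\dots,i_\ell$ in $G$, takes $A=\{x_{i_0},\dots,x_{i_{\ell-1}}\}$ (automatically stable, as it lies in one side of the bipartition), and uses the closedness of $G$ to verify directly that $N_{H_G}(A)$ is a vertex cover, hence a minimal one by \cite[Lemma 3.4]{DR}; Theorem \ref{v-number-clutters-graphs} then gives ${\rm v}(I(H_G))\leq |A|=\ell_G=\IM(H_G)$. So the paper exhibits one explicit element of $\mathcal{A}_{H_G}$ of the right size rather than extracting an induced matching from a minimum one. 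If you want to keep your more general route through arbitrary bipartite graphs (re-deriving \cite[Theorem 4.5]{Saha-Seng}), you must supply a complete proof of the independent-transversal step; otherwise you should either follow the closed-graph construction or simply cite the Saha--Sengupta result.
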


\begin{proof}
Let $\ell:=\ell_G$ and let $\{i_0,i_1\},\{i_1, i_2\}, \ldots,\{i_{\ell-1},i_{\ell}\}$ be a longest induced path in $G$. Define $$e_1:=\{x_{i_0},y_{i_1}\}, e_2:=\{x_{i_1},y_{i_2}\},\ldots, e_{\ell}:=\{x_{i_{\ell-1}},y_{i_{\ell}}\}$$
to be its corresponding induced matching in $H_G$. Consider the set $A=\{x_{i_0}, \ldots, x_{i_{\ell-1}}\}$. If we prove that $N_{H_G}(A)$ is a vertex cover, it follows from \cite[Lemma 3.4]{DR} that it is indeed minimal. Then, using \cite[Theorem 3.5]{DR}, we get
$$ {\rm v}(\init (J_G))={\rm v}(I(H_G)) \leq |A|= \IM (H_G)=\ell$$
and we are done.\par
For this purpose, let $e=\{x_r,y_s\} \in E(H_G)$. We want to prove that $ y_s \in N_{H_G}(A)$. If $x_r \in A$, there is nothing to prove. Assume that $x_r \notin A$.  If $ y_s \notin N_{H_G}(A)$, then there exists $k$ such that $\{x_r, y_{i_{k}}\} \in E(H_G)$ because $\ell$ is the induced matching number. Furthermore, $r> i_{k-1}$, otherwise we would have that $\{x_{i_{k-1}},y_{i_{k}}\} \in E(H_G)$, which contradicts the fact that $e_1, \ldots, e_{\ell}$ is an induced matching. Thus we have two possibilities.\par 
If $r<i_k<s$, since $G$ is closed, we get $\{x_{i_k},y_s\} \in E(H_G)$. Thus $ y_s \in N_{H_G}(A)$, a contradiction.\par
If $i_{k-1}<r<s< i_k$, since $G$ is closed, we get $\{x_{i_{k-1}},y_s\} \in E(H_G)$. Thus $ y_s \in N_{H_G}(A)$, a contradiction.
\end{proof}
Collecting together previous propositions, we obtain the following chain of inequalities. 
\begin{theorem}\label{relation_initial}
Let $G\neq\mathcal{K}_{n}$ be a connected closed graph and let $H_G$ be its initial graph. Then
$$ 
{\rm v} (J_G)<\theta(G)\leq  {\rm v} (\init (J_G))={\rm v}(I(H)) \leq \IM (H)=\ell_G = \reg (S/J_G)=\reg (S/\init(J_G)).
$$
If $G= \mathcal{K}_n$, then  $0={\rm v} (J_G)<{\rm v}(\init (J_G))=\reg (S/J_G)=\reg (S/\init(J_G))=1.$

\end{theorem}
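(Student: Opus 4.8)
The plan is to assemble Theorem \ref{relation_initial} entirely from the propositions already established, so no new mathematical content is required --- the work is purely combining the chains of inequalities and invoking the stated regularity result. First I would treat the main case $G \neq \mathcal{K}_n$. Proposition \ref{relat-init} gives the left portion of the chain, namely ${\rm v}(J_G) < \theta(G) \leq {\rm v}(\init(J_G)) = {\rm v}(I(H_G))$, where here $H = H_G$ is the initial graph. Then Proposition \ref{init-Induced} supplies the middle portion, ${\rm v}(\init(J_G)) = {\rm v}(I(H_G)) \leq \IM(H_G) = \ell_G$. Concatenating these two gives everything up through $\ell_G$.

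For the rightmost equalities I would invoke Proposition \ref{regBEI} together with the remark following Theorem \ref{E} in the introduction: since $G$ is closed, Proposition \ref{regBEI} yields $\reg(J_G) = \ell_G + 1$, equivalently $\reg(S/J_G) = \ell_G$; and the equality $\reg(S/J_G) = \reg(S/\init(J_G)) = \ell_G$ for closed graphs is exactly the statement attributed to \cite{Via-Zar} (noted in the excerpt as the justification for ${\rm reg}(S/J_G) = {\rm reg}(S/\init(J_G)) = \ell_G$). Splicing this onto the end of the chain from the previous paragraph produces the full displayed inequality for $G \neq \mathcal{K}_n$.

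The complete graph case is handled separately and is essentially a direct computation already carried out inside the proofs of the cited propositions. For $G = \mathcal{K}_n$ one has ${\rm v}(J_{\mathcal{K}_n}) = 0$ by Theorem \ref{comb_at_comple}, while the final paragraph of Proposition \ref{relat-init} records that $\theta(\mathcal{K}_n) = {\rm v}(\init(J_G)) = {\rm v}(I(H)) = 1$. Since $\mathcal{K}_n$ has no induced path of positive length beyond a single edge, $\ell_G = 1$ and $\reg(S/J_{\mathcal{K}_n}) = \reg(S/\init(J_{\mathcal{K}_n})) = 1$, giving the stated equalities $0 = {\rm v}(J_G) < {\rm v}(\init(J_G)) = \reg(S/J_G) = \reg(S/\init(J_G)) = 1$.

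Since every inequality and equality is provided verbatim by an earlier result, there is no genuine obstacle here; the only point requiring mild care is bookkeeping --- making sure the strict inequality ${\rm v}(J_G) < \theta(G)$ from Proposition \ref{relat-init} is preserved as strict in the combined chain, and confirming that the identification ${\rm v}(\init(J_G)) = {\rm v}(I(H_G))$ used in both Propositions \ref{relat-init} and \ref{init-Induced} refers to the same initial graph $H = H_G$, so that the two chains glue at a common term. With those checks the proof reduces to a one-line citation of the three propositions plus \cite{Via-Zar}.
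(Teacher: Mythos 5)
Your proposal is correct and matches the paper's own argument: the theorem is stated immediately after Propositions \ref{relat-init} and \ref{init-Induced} with the single line ``Collecting together previous propositions, we obtain the following chain of inequalities,'' i.e.\ exactly the concatenation you describe, with the rightmost equalities supplied by Proposition \ref{regBEI} and the result of \cite{Via-Zar}, and the $\mathcal{K}_n$ case read off from the last paragraph of the proof of Proposition \ref{relat-init}. No further comparison is needed.
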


\begin{remark}
It is worth noting that the results presented above assume that $G$ is closed with respect to the given labeling. If we change the labeling, then $ \init (J_G)$ may not represent the edge ideal of a bipartite graph in general.  However, $ \init (J_G)$ is squarefree {\rm \cite{HHHKR}}, thereby representing the edge ideal of a clutter, and {\rm \cite[Theorem 3.5]{DR}} can still be used to compare ${\rm v} (J_G)$ and ${\rm v}(\init (J_G))$.
\end{remark}

Except for the case of binomial edge ideals of closed graphs, not much is known about how v-numbers behave with respect to Gr\"obner degeneration. In this spirit one may ask

\begin{question}\label{question} Does the inequality ${\rm v}(I) \leq {\rm v}(\init (I))$ hold in a more general setting? 
\end{question}

This question arises naturally since v-numbers of monomial ideals have been already widely studied, and algebraic invariants of monomial ideals are often easier to compute.\par
Below is an example of a graph which is not closed but still its binomial edge ideal satisfies ${\rm v}(J_G) \leq {\rm v}(\init (J_G))\leq \reg(S/J_G)$ (see Section \ref{5} for more examples).
\begin{example}\label{ExampleNC}
Let $G$ be the non-closed graph in Figure \ref{correspondance}. Clearly, $B=\{ 2,3,4,5,7,9\}$ is a  minimum connected dominating set. Hence, ${\rm v}_{J_{\mathcal{K}_{10}}}(J_G)=\gamma_c(G)=6$. Also, $$\{\{1,2\},\{3,4\},\{5,6\},\{7,8\},\{9,10\}\}$$ is a minimum clique cover, so $\theta(G)=\ell=5$. A \emph{Macaulay2} computation (using Code \ref{v-number-procedure}) shows that ${\rm v} (J_G)=3$ (so, the minimum among all the localization of ${\rm v} (J_G)$ is not reached at the complete graph) and  ${\rm v} (\init(J_G))=6$. Moreover, since $G$ is a tree, it is itself a spanning tree and, by Corollary \ref{corollaryMain}, we have
$$ 
3={\rm v} (J_G)< 6= \gamma_c(G)={\rm v}_{J_{\mathcal{K}_{10}}}(J_G)={\rm v} (\init(J_G))\leq  \reg (S/J_G)=7.
$$
Notice that since $G$ is not closed, Theorem \ref{relation_initial} does not apply. In fact, the length of the longest induced path is $5$, so $\ell < {\rm v}_{J_{\mathcal{K}_{10}}}(J_G)$. 
\end{example}
\begin{figure}[H]
\begin{tikzpicture}[line width=.5pt,scale=0.75]
		\tikzstyle{every node}=[inner sep=1pt, minimum width=5.5pt] 
\tiny{
\node (1) at (-2,2){$\bullet$};
\node (2) at (-1.8,1) {$\bullet$};
\node (3) at (-1,.5) {$\bullet$};
\node (4) at (0,0) {$\bullet$};
\node (5) at (1.5,.5){$\bullet$};
\node (6) at (2,2) {$\bullet$};
\node (7) at (0,1) {$\bullet$};
\node (8) at (.5,2) {$\bullet$};
\node (9) at (0,-1.5){$\bullet$};
\node (10) at (-0.5,-2.5) {$\bullet$};
\node at (-2,2.3){$1$};
\node at (-1.6,1.1) {$2$};
\node at  (-1,.2){$3$};
\node at (0.2,-.1){$4$};
\node at (1.3,.7){$5$};
\node at (2.2,1.8) {$6$};
\node at  (.2,1){$7$};
\node at (.7,2){$8$};
\node at (-.2,-1.5){$9$};
\node at (-.8,-2.5) {$10$};
\draw[-,line width=1pt] (1) to (2);
\draw[-,line width=1pt] (2) to (3);
\draw[-,line width=1pt] (3) -- (4);
\draw[-,line width=1pt] (4) -- (5);
\draw[-,line width=1pt] (5) -- (6);
\draw[-,line width=1pt] (4) -- (5);
\draw[-,line width=1pt] (5) -- (6);
\draw[-,line width=1pt] (7) -- (8);
\draw[-,line width=1pt] (4) -- (7);
\draw[-,line width=1pt] (4) -- (9);
\draw[-,line width=1pt] (9) -- (10);
}
\end{tikzpicture}
\caption{}\label{correspondance}
\end{figure}

In the case of trees, by Corollary \ref{corollaryMain} and {\rm \cite[Theorem 4.1]{Jay-Nara}} the localization of the v-number at the complete graph is a better bound for the regularity compared to $\ell$. One may ask if this is the case for other families of graphs.

\begin{question}\label{question2} For which families of graphs is ${\rm v}_{J_{\mathcal{K}_{n}}}(J_G)$ a better lower bound for the regularity than $\ell$? 
\end{question}

\section{Some examples} \label{5}
We now discuss some examples related to our previous results. For this purpose, we also use Code \ref{v-number-procedure} to compute v-numbers in \emph{Macaulay2}. \par 
Throughout this section $\prec$ is the lexicographic order given by $x_{1}\succ\dots\succ x_{n}\succ y_{1}\succ\dots\succ y_{n}$.

\subsection{Complete multipartite graphs}
We start by computing the v-number of a complete multipartite graph $\mathcal{K}_{a_1,\ldots,a_r}$.

\begin{lemma}{\rm \cite[Lemma 2.2]{OhtBin}}\label{pri-bip-com}
Let $G=\mathcal{K}_{a_1,\ldots,a_r}$ with $a_1\leq \ldots \leq a_r$ be a complete $r$-partite graph on vertex set $V=\bigsqcup\limits_{l=1}^{r} V_l=[n]$. Set $s:= |\{l \mid a_l=1\}|$. Then, the minimal primary decomposition of $J_G$ is given by
$$J_{G}=J_{\mathcal{K}_{n}}\cap P_{s+1} \cap \ldots \cap P_{r}$$
where $P_{l}:=(x_{v},y_{v} \mid v \notin V_l)$ for $l=s+1, \ldots, r$. 
\end{lemma}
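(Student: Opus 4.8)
The plan is to prove the primary decomposition by combining the general structure theorem for associated primes of binomial edge ideals (Theorem \ref{primary-decomp} together with the cut-point characterization in Theorem \ref{minimal_primes}) with a direct identification of which subsets $\mathcal{S}\subseteq[n]$ give minimal primes for the complete multipartite graph. First I would recall that by Theorem \ref{primary-decomp} we always have $J_G=\bigcap_{\mathcal{S}\subseteq[n]}P_{\mathcal{S}}(G)$, so it suffices to determine exactly which $P_{\mathcal{S}}(G)$ are the minimal primes, since the minimal primary decomposition of a radical ideal is the intersection of its minimal primes. By Remark \ref{Knprime}, $\mathcal{S}=\emptyset$ always yields the minimal prime $P_{\emptyset}(G)=J_{\mathcal{K}_n}$ (using that $G=\mathcal{K}_{a_1,\ldots,a_r}$ is connected as soon as $r\geq 2$).

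The heart of the argument is to apply the cut-point criterion of Theorem \ref{minimal_primes}: a nonempty $\mathcal{S}$ gives a minimal prime if and only if every $i\in\mathcal{S}$ is a cut point of $G_{([n]\setminus\mathcal{S})\cup\{i\}}$. Here I would use the explicit combinatorics of the complete multipartite graph: two vertices are adjacent precisely when they lie in different parts $V_l$, so an induced subgraph $G_{\mathcal{T}}$ is disconnected exactly when $\mathcal{T}$ is entirely contained in a single part $V_l$ (and has at least two vertices), in which case it is a totally disconnected graph. The plan is to show that a single vertex $i$ is a cut point of $G_{([n]\setminus\mathcal{S})\cup\{i\}}$ if and only if removing $i$ leaves the remaining vertices confined to one part; for a complete multipartite graph, this forces the complement $[n]\setminus\mathcal{S}$ (together with the behaviour around $i$) to concentrate in one of the parts $V_l$ with $a_l\geq 2$. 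Working this out, I expect to find that the only nonempty $\mathcal{S}$ producing minimal primes are those of the form $\mathcal{S}=[n]\setminus V_l$ for the parts $V_l$ with $a_l\geq 2$, which gives precisely $P_l=(x_v,y_v\mid v\notin V_l)$; the parts with $a_l=1$ (there are $s$ of them) contribute nothing new because a singleton part cannot create the required cut-point configuration.

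Once the candidate minimal primes are pinned down, I would verify the two things needed to conclude. First, that each $P_l$ as defined genuinely equals $P_{\mathcal{S}}(G)$ for $\mathcal{S}=[n]\setminus V_l$: here $\mathcal{T}=V_l$, the induced graph $G_{V_l}$ is totally disconnected (an independent set), so each connected component $G_i$ is a single vertex, each $\widetilde{G}_i$ is a point contributing no binomial generator, and hence $P_{[n]\setminus V_l}(G)=(x_v,y_v\mid v\in[n]\setminus V_l)=P_l$. Second, that no redundancy or omission occurs, i.e.\ that these primes are genuinely minimal and that no other $\mathcal{S}$ slips in; this is exactly where the cut-point analysis of the previous paragraph does the work.

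The main obstacle I anticipate is the careful case analysis in the cut-point criterion, particularly handling the vertices lying in singleton parts ($a_l=1$) and confirming they never yield a new minimal prime, as well as ruling out subsets $\mathcal{S}$ that are ``mixed'' across several parts. The subtlety is that the cut-point condition is phrased relative to the auxiliary graph $G_{([n]\setminus\mathcal{S})\cup\{i\}}$, so one must check it vertex-by-vertex for each candidate $\mathcal{S}$ rather than just inspecting $G$ itself; getting the quantifiers right (every $i\in\mathcal{S}$ must simultaneously be a cut point) is where the bookkeeping could go wrong. I would structure the proof to first establish the clean combinatorial fact that ``$G_{\mathcal{T}}$ disconnected $\iff$ $\mathcal{T}\subseteq V_l$ for some $l$,'' and then feed this into Theorem \ref{minimal_primes} to extract the complete list of minimal primes cleanly.
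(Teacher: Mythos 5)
The paper does not prove this lemma at all: it is quoted verbatim from Ohtani \cite[Lemma 2.2]{OhtBin}, so there is no in-paper argument to compare against. Your proposed derivation from Theorem \ref{primary-decomp} and the cut-point criterion of Theorem \ref{minimal_primes} is correct and self-contained given the tools already quoted in Section \ref{preli-section}. The key combinatorial facts you isolate do hold: an induced subgraph $G_{\mathcal{T}}$ of $\mathcal{K}_{a_1,\ldots,a_r}$ is disconnected precisely when $\mathcal{T}$ lies in a single part $V_l$ with $|\mathcal{T}|\geq 2$ (in which case it is edgeless); for such $\mathcal{T}$, a vertex $i\in\mathcal{S}$ with $i\in V_l$ is \emph{not} a cut point of $G_{\mathcal{T}\cup\{i\}}$ (adding $i$ only increases the component count), while any $i\notin V_l$ is adjacent to all of $\mathcal{T}$ and hence is a cut point. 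Requiring the condition for \emph{every} $i\in\mathcal{S}=[n]\setminus\mathcal{T}$ forces $V_l\subseteq\mathcal{T}$, hence $\mathcal{T}=V_l$ with $a_l\geq 2$, and then $P_{[n]\setminus V_l}(G)=(x_v,y_v\mid v\notin V_l)=P_l$ since $G_{V_l}$ has only singleton components. Together with $P_{\emptyset}(G)=J_{\mathcal{K}_n}$ and the fact that a radical ideal is the intersection of its minimal primes, this yields exactly the stated decomposition. The only point worth making explicit in a write-up is the implicit hypothesis $r\geq 2$ (so that $G$ is connected and Theorem \ref{minimal_primes} applies), which you already flag.
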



\begin{proposition}
Let $G=\mathcal{K}_{a_1,\ldots,a_r}$ with $a_1\leq \ldots \leq a_r$ be a complete $r$-partite graph on vertex set $V=\bigsqcup\limits_{l=1}^{r} V_l=[n]$. Then,
$$
{\rm v}(J_{\mathcal{K}_{m,n}})=\begin{cases}
1, & \hbox{ if } a_1=1, \\
2={\rm reg}(S/J_{G}), & \hbox{ otherwise}.
\end{cases}
$$

\end{proposition}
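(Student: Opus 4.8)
The plan is to compute $\mathrm{v}(J_G)$ for the complete multipartite graph $G=\mathcal{K}_{a_1,\dots,a_r}$ by invoking Proposition \ref{noembedded}, which reduces the problem to finding $\min\{\alpha((J_G:\mathfrak{p})/J_G)\mid \mathfrak{p}\in\mathrm{Ass}(J_G)\}$. By Lemma \ref{pri-bip-com}, the associated (= minimal) primes of $J_G$ are exactly $J_{\mathcal{K}_n}$ together with the $P_l=(x_v,y_v\mid v\notin V_l)$ for $l=s+1,\dots,r$. So I would split into two cases according to whether $a_1=1$.

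First I would dispose of the case $a_1=1$. Here there is a singleton part $\{u\}=V_1$, and $u$ is adjacent to every other vertex, so $G$ is connected and $G\neq\mathcal{K}_n$ only when some $a_l\geq 2$. The single vertex $u$ is a connected dominating set: for any non-edge $\{i,j\}$ (which must lie inside a common part $V_l$), the path $i,u,j$ has interior $\{u\}$. Hence $\gamma_c(G)=1$, and by Theorem \ref{comb_at_comple} we get $\mathrm{v}_{J_{\mathcal{K}_n}}(J_G)=1$. Since $\mathrm{v}(J_G)\geq 1$ whenever $J_G$ is not prime (it cannot be $0$ unless $G=\mathcal{K}_n$), this forces $\mathrm{v}(J_G)=1$.

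For the case $a_1\geq 2$ I would show $\mathrm{v}(J_G)=2$. The lower bound $\mathrm{v}(J_G)\geq 2$ comes from observing that $\gamma_c(G)=2$: no single vertex dominates a non-edge (a non-edge $\{i,j\}$ sits inside some part $V_l$, and its vertices share no common neighbour realizing the required interior path of length one, so one vertex never suffices when no part is a singleton), while any two vertices from two distinct parts form a connected dominating set. Thus $\mathrm{v}_{J_{\mathcal{K}_n}}(J_G)=2$ by Theorem \ref{comb_at_comple}. To conclude $\mathrm{v}(J_G)=2$ rather than merely $\leq 2$, I would check that the localizations at the other primes $P_l$ cannot be smaller: using Proposition \ref{noembedded} one computes $\alpha((J_G:P_l)/J_G)$ and verifies it is at least $2$. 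The main obstacle is precisely this step: controlling $\mathrm{v}_{P_l}(J_G)$ directly, since $P_l$ is not the complete-graph prime and Theorem \ref{comb_at_comple} does not apply to it. I expect one must exhibit explicitly the degree-$2$ generators of $(J_G:P_l)/J_G$ and rule out any degree-$1$ element, likely by a support/parity argument on monomials modulo the Gröbner basis of $J_G$.

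Finally, for the regularity claim $2=\mathrm{reg}(S/J_G)$ when $a_1\geq 2$, I would apply Proposition \ref{regBEI}: the longest induced path of a complete multipartite graph with all parts of size $\geq 2$ has length $\ell=1$ (any induced path must alternate between parts and cannot extend beyond a single edge without creating a chord, since any two vertices in distinct parts are adjacent), so $\ell+1\leq\mathrm{reg}(J_G)$ gives $\mathrm{reg}(S/J_G)\geq 1$; a sharper known regularity computation for complete multipartite binomial edge ideals, or the bound $\mathrm{reg}(S/J_G)\geq \mathrm{v}(J_G)$ combined with an upper bound, should pin it at exactly $2$. I would double-check the induced-path length computation carefully, as it is the quantitative heart of the regularity half of the statement.
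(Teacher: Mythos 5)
Your overall route is the same as the paper's: additivity is not needed, the prime $J_{\mathcal{K}_n}$ is handled via Theorem \ref{comb_at_comple} by computing $\gamma_c(G)$, the remaining minimal primes $P_l$ from Lemma \ref{pri-bip-com} are handled separately, and the regularity is quoted from known results. However, three steps are either wrong as written or left genuinely open.

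First, your justification that $\gamma_c(G)\ge 2$ when $a_1\ge 2$ is incorrect: two vertices $i,j$ in the same part $V_l$ \emph{do} share common neighbours, namely every vertex outside $V_l$, so ``no common neighbour realizing the interior path'' is false. The correct reason is simpler: any single vertex $u$ lies in some part $V_m$ with $|V_m|\ge 2$ and is non-adjacent to the other vertices of $V_m$, so $\{u\}$ is not a dominating set. Second, the step you explicitly defer --- showing ${\rm v}_{P_l}(J_G)\ge 2$ --- is the one piece of the proof you must actually supply, and no Gr\"obner or parity machinery is needed: if $(J_G\colon f)=P_l$ with $\deg f=1$, then $x_vf\in (J_G)_2$ for every $v\notin V_l$; but $(J_G)_2$ is spanned by the binomials $x_iy_j-x_jy_i$, in which the coefficient of $x_iy_j$ is the negative of that of $x_jy_i$ and no monomial $x_ix_j$ or $y_iy_j$ occurs, and this forces $f=0$. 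This is exactly the one-line observation the paper makes (``the generators are quadratic binomials of the form $x_iy_j-x_jy_i$, so no linear $f$ can give $(J_G\colon f)=P_l$''). Third, the regularity half rests on a miscomputation: when all $a_l\ge 2$ the longest induced path has length $2$, not $1$ (take $i,k$ in the same part and $j$ in another; $i,j,k$ is an induced path of length $2$, and length $3$ is impossible since the non-adjacency conditions would force two adjacent vertices into the same part). So Proposition \ref{regBEI} gives the lower bound $\reg(S/J_G)\ge \ell=2$ directly, but for the matching upper bound you cannot invoke ``$\reg(S/J_G)\ge {\rm v}(J_G)$ plus an upper bound'' (that inequality is not established for binomial edge ideals in this paper, and $G$ is not closed here); you must cite the known computation of $\reg(S/J_G)$ for complete multipartite graphs, as the paper does via \cite{Sche-Zafar} and \cite{Kiani-Mada2}.
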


\begin{proof}
Assume that $a_1=1$. By Theorem \ref{comb_at_comple}, we have that ${\rm v}_{J_{\mathcal{K}_{n+1}}}(J_{\mathcal{K}_{a_1,\ldots,a_r}})= \gamma_c(\mathcal{K}_{a_1,\ldots,a_r})=1$. Thus, since $J_{\mathcal{K}_{a_1,\ldots,a_r}}$ is not prime, ${\rm v}(J_{\mathcal{K}_{a_1,\ldots,a_r}})=1$. Now, let $a_1>1$. By Lemma \ref{pri-bip-com} the minimal primes of  $J_{\mathcal{K}_{a_1,\ldots,a_r}}$ are $J_{\mathcal{K}_{n}}$ and $P_{l}$ for $l=s+1, \ldots, r$. Since the generators of $J_{\mathcal{K}_{a_1,\ldots,a_r}}$ are quadratic binomials of the form $x_iy_j-x_jy_i$, then there cannot be an element $f\in S_{1}$ such that $(J_{\mathcal{K}_{a_1,\ldots,a_r}}:f)=P_{l}$. Therefore, by definition of v-number, we have that 
$$
2\leq {\rm v}_{P_{l}}(J_{\mathcal{K}_{a_1,\ldots,a_r}}),\quad \forall l \in \{s+1, \ldots, r\}.
$$
Moreover, it is easy to see that every two vertices $\{v,w\}$ that form an edge (i.e., they belong to different $V_i$'s) are a connected dominating set. Since by our assumption on $a_1$ there is no vertex of degree $n-1$, it follows that $\gamma_c(\mathcal{K}_{a_1,\ldots,a_r})=2$. So, by Theorem \ref{comb_at_comple} $${\rm v}_{J_{\mathcal{K}_{a_1,\ldots,a_r}}}(J_{\mathcal{K}_{a_1,\ldots,a_r}})=\gamma_c(\mathcal{K}_{a_1,\ldots,a_r})= 2.$$  Hence, 
$$
{\rm v}(J_{\mathcal{K}_{a_1,\ldots,a_r}})=2={\rm reg}(S/J_{\mathcal{K}_{a_1,\ldots,a_r}})
$$
 where the last equality follows from {\rm \cite[Theorem 1.1]{Sche-Zafar}} and {\rm \cite[Proposition 3.4]{Kiani-Mada2}}.
\end{proof}

\subsection{Graphs with $5$ vertices}
Driven by Question \ref{question}, we compute the vector $$({\rm v}(J_{G}),\: {\rm v}({\rm in}_{\prec}(J_{G})),\:{\rm reg}(S/J_{G}))$$ for all connected graphs with $5$ vertices. In all these cases, we obtain ${\rm v}(J_{G})\leq {\rm v}({\rm in}_{\prec}(J_{G}))\leq {\rm reg}(S/J_{G})$.

\begin{table}[H]
\begin{tabular}{|c|c|c|c|c|c|}
\cline{3-3}  
\multicolumn{1}{c}{}& &
\begin{tikzpicture}[line width=.5pt,scale=0.75]
\tikzstyle{every node}=[inner sep=1pt, minimum width=5.5pt] 
\tiny{
\node (1) at (0,1){$\bullet$};
\node (2) at (1,0) {$\bullet$};
\node (3) at (.5,-1) {$\bullet$};
\node (4) at (-.5,-1){$\bullet$};
\node (5) at (-1,0) {$\bullet$};
\node at (0,-1.7){${(0,1,1)}$};
\node at (0,1.2){$1$};
\node at (1.2,0) {$2$};
\node at  (.5,-1.2){$3$};
\node at (-.5,-1.2){$4$};
\node at (-1.2,0){$5$};
\draw[-,line width=1pt] (1) to (2);
\draw[-,line width=1pt] (1) to (3);
\draw[-,line width=1pt] (1) -- (4);
\draw[-,line width=1pt] (1) -- (5);
\draw[-,line width=1pt] (2) -- (3);
\draw[-,line width=1pt] (2) -- (4);
\draw[-,line width=1pt] (2) -- (5);
\draw[-,line width=1pt] (3) -- (4);
\draw[-,line width=1pt] (3) -- (5);
\draw[-,line width=1pt] (4) -- (5);
}
\end{tikzpicture}\\
\hline \begin{tikzpicture}[line width=.5pt,scale=0.75]
\tikzstyle{every node}=[inner sep=1pt, minimum width=5.5pt] 
\tiny{
\node (1) at (0,1){$\bullet$};
\node (2) at (1,0) {$\bullet$};
\node (3) at (.5,-1) {$\bullet$};
\node (4) at (-.5,-1){$\bullet$};
\node (5) at (-1,0) {$\bullet$};
\node at (0,-1.7){${(3,3,3)}$};
\node at (0,1.2){$1$};
\node at (1.2,0) {$2$};
\node at  (.5,-1.2){$3$};
\node at (-.5,-1.2){$4$};
\node at (-1.2,0){$5$};
\draw[-,line width=1pt] (1) to (2);
\draw[-,line width=1pt] (1) to (5);
\draw[-,line width=1pt] (2) -- (3);
\draw[-,line width=1pt] (3) -- (4);
\draw[-,line width=1pt] (4) -- (5);
}
\end{tikzpicture}&\begin{tikzpicture}[line width=.5pt,scale=0.75]
\tikzstyle{every node}=[inner sep=1pt, minimum width=5.5pt] 
\tiny{
\node (1) at (0,1){$\bullet$};
\node (2) at (1,0) {$\bullet$};
\node (3) at (.5,-1) {$\bullet$};
\node (4) at (-.5,-1){$\bullet$};
\node (5) at (-1,0) {$\bullet$};
\node at (0,-1.7){${(1,1,2)}$};
\node at (0,1.2){$1$};
\node at (1.2,0) {$2$};
\node at  (.5,-1.2){$3$};
\node at (-.5,-1.2){$4$};
\node at (-1.2,0){$5$};
\draw[-,line width=1pt] (1) to (2);
\draw[-,line width=1pt] (1) to (3);
\draw[-,line width=1pt] (1) -- (4);
\draw[-,line width=1pt] (1) -- (5);
\draw[-,line width=1pt] (2) -- (3);
\draw[-,line width=1pt] (2) -- (5);
\draw[-,line width=1pt] (3) -- (4);
\draw[-,line width=1pt] (4) -- (5);
}
\end{tikzpicture}&
\begin{tikzpicture}[line width=.5pt,scale=0.75]
\tikzstyle{every node}=[inner sep=1pt, minimum width=5.5pt] 
\tiny{
\node (1) at (0,1){$\bullet$};
\node (2) at (1,0) {$\bullet$};
\node (3) at (.5,-1) {$\bullet$};
\node (4) at (-.5,-1){$\bullet$};
\node (5) at (-1,0) {$\bullet$};
\node at (0,-1.7){${(2,2,2)}$};
\node at (0,1.2){$1$};
\node at (1.2,0) {$2$};
\node at  (.5,-1.2){$3$};
\node at (-.5,-1.2){$4$};
\node at (-1.2,0){$5$};
\draw[-,line width=1pt] (1) to (3);
\draw[-,line width=1pt] (1) -- (5);
\draw[-,line width=1pt] (2) -- (3);
\draw[-,line width=1pt] (2) -- (5);
\draw[-,line width=1pt] (3) -- (4);
\draw[-,line width=1pt] (4) -- (5);
}
\end{tikzpicture}&\begin{tikzpicture}[line width=.5pt,scale=0.75]
\tikzstyle{every node}=[inner sep=1pt, minimum width=5.5pt] 
\tiny{
\node (1) at (0,1){$\bullet$};
\node (2) at (1,0) {$\bullet$};
\node (3) at (.5,-1) {$\bullet$};
\node (4) at (-.5,-1){$\bullet$};
\node (5) at (-1,0) {$\bullet$};
\node at (0,-1.7){${(1,1,2)}$};
\node at (0,1.2){$1$};
\node at (1.2,0) {$2$};
\node at  (.5,-1.2){$3$};
\node at (-.5,-1.2){$4$};
\node at (-1.2,0){$5$};
\draw[-,line width=1pt] (1) to (2);
\draw[-,line width=1pt] (1) -- (5);
\draw[-,line width=1pt] (2) -- (3);
\draw[-,line width=1pt] (2) -- (4);
\draw[-,line width=1pt] (2) -- (5);
\draw[-,line width=1pt] (3) -- (5);
\draw[-,line width=1pt] (4) -- (5);
}
\end{tikzpicture}&\begin{tikzpicture}[line width=.5pt,scale=0.75]
\tikzstyle{every node}=[inner sep=1pt, minimum width=5.5pt] 
\tiny{
\node (1) at (0,1){$\bullet$};
\node (2) at (1,0) {$\bullet$};
\node (3) at (.5,-1) {$\bullet$};
\node (4) at (-.5,-1){$\bullet$};
\node (5) at (-1,0) {$\bullet$};
\node at (0,-1.7){${(1,2,2)}$};
\node at (0,1.2){$2$};
\node at (1.2,0) {$1$};
\node at  (.5,-1.2){$3$};
\node at (-.5,-1.2){$4$};
\node at (-1.2,0){$5$};
\draw[-,line width=1pt] (1) to (2);
\draw[-,line width=1pt] (1) to (3);
\draw[-,line width=1pt] (1) -- (4);
\draw[-,line width=1pt] (1) -- (5);
\draw[-,line width=1pt] (2) -- (3);
\draw[-,line width=1pt] (2) -- (4);
\draw[-,line width=1pt] (3) -- (4);
\draw[-,line width=1pt] (3) -- (5);
\draw[-,line width=1pt] (4) -- (5);
}
\end{tikzpicture}\\
\hline 
\begin{tikzpicture}[line width=.5pt,scale=0.75]
\tikzstyle{every node}=[inner sep=1pt, minimum width=5.5pt] 
\tiny{
\node (1) at (0,1){$\bullet$};
\node (2) at (1,0) {$\bullet$};
\node (3) at (.5,-1) {$\bullet$};
\node (4) at (-.5,-1){$\bullet$};
\node (5) at (-1,0) {$\bullet$};
\node at (0,-1.7){${(1,1,2)}$};
\node at (0,1.2){$1$};
\node at (1.2,0) {$2$};
\node at  (.5,-1.2){$3$};
\node at (-.5,-1.2){$4$};
\node at (-1.2,0){$5$};
\draw[-,line width=1pt] (1) to (2);
\draw[-,line width=1pt] (1) to (3);
\draw[-,line width=1pt] (1) -- (4);
\draw[-,line width=1pt] (1) -- (5);
}
\end{tikzpicture}&\begin{tikzpicture}[line width=.5pt,scale=0.75]
\tikzstyle{every node}=[inner sep=1pt, minimum width=5.5pt] 
\tiny{
\node (1) at (0,1){$\bullet$};
\node (2) at (1,0) {$\bullet$};
\node (3) at (.5,-1) {$\bullet$};
\node (4) at (-.5,-1){$\bullet$};
\node (5) at (-1,0) {$\bullet$};
\node at (0,-1.7){${(1,2,2)}$};
\node at (0,1.2){$1$};
\node at (1.2,0) {$5$};
\node at  (.5,-1.2){$4$};
\node at (-.5,-1.2){$3$};
\node at (-1.2,0){$2$};
\draw[-,line width=1pt] (1) -- (5);
\draw[-,line width=1pt] (2) -- (3);
\draw[-,line width=1pt] (2) -- (4);
\draw[-,line width=1pt] (2) -- (5);
\draw[-,line width=1pt] (3) -- (4);
\draw[-,line width=1pt] (3) -- (5);
\draw[-,line width=1pt] (4) -- (5);
}
\end{tikzpicture}&\begin{tikzpicture}[line width=.5pt,scale=0.75]
\tikzstyle{every node}=[inner sep=1pt, minimum width=5.5pt] 
\tiny{
\node (1) at (0,1){$\bullet$};
\node (2) at (1,0) {$\bullet$};
\node (3) at (.5,-1) {$\bullet$};
\node (4) at (-.5,-1){$\bullet$};
\node (5) at (-1,0) {$\bullet$};
\node at (0,-1.7){${(1,2,2)}$};
\node at (0,1.2){$1$};
\node at (1.2,0) {$3$};
\node at  (.5,-1.2){$4$};
\node at (-.5,-1.2){$5$};
\node at (-1.2,0){$2$};
\draw[-,line width=1pt] (1) to (2);
\draw[-,line width=1pt] (1) -- (5);
\draw[-,line width=1pt] (2) -- (3);
\draw[-,line width=1pt] (2) -- (4);
\draw[-,line width=1pt] (2) -- (5);
\draw[-,line width=1pt] (3) -- (4);
\draw[-,line width=1pt] (3) -- (5);
\draw[-,line width=1pt] (4) -- (5);
}
\end{tikzpicture}&
\begin{tikzpicture}[line width=.5pt,scale=0.75]
\tikzstyle{every node}=[inner sep=1pt, minimum width=5.5pt] 
\tiny{
\node (1) at (0,1){$\bullet$};
\node (2) at (1,0) {$\bullet$};
\node (3) at (.5,-1) {$\bullet$};
\node (4) at (-.5,-1){$\bullet$};
\node (5) at (-1,0) {$\bullet$};
\node at (0,-1.7){${(2,2,3)}$};
\node at (0,1.2){$1$};
\node at (1.2,0) {$2$};
\node at  (.5,-1.2){$3$};
\node at (-.5,-1.2){$4$};
\node at (-1.2,0){$5$};
\draw[-,line width=1pt] (1) to (2);
\draw[-,line width=1pt] (1) -- (5);
\draw[-,line width=1pt] (1) -- (3);
\draw[-,line width=1pt] (4) -- (5);
}
\end{tikzpicture}&\begin{tikzpicture}[line width=.5pt,scale=0.75]
\tikzstyle{every node}=[inner sep=1pt, minimum width=5.5pt] 
\tiny{
\node (1) at (0,1){$\bullet$};
\node (2) at (1,0) {$\bullet$};
\node (3) at (.5,-1) {$\bullet$};
\node (4) at (-.5,-1){$\bullet$};
\node (5) at (-1,0) {$\bullet$};
\node at (0,-1.7){${(1,2,2)}$};
\node at (0,1.2){$2$};
\node at (1.2,0) {$1$};
\node at  (.5,-1.2){$5$};
\node at (-.5,-1.2){$4$};
\node at (-1.2,0){$3$};
\draw[-,line width=1pt] (1) to (2);
\draw[-,line width=1pt] (1) -- (5);
\draw[-,line width=1pt] (2) -- (5);
\draw[-,line width=1pt] (3) -- (4);
\draw[-,line width=1pt] (3) -- (5);
\draw[-,line width=1pt] (4) -- (5);
}
\end{tikzpicture}\\
\hline \begin{tikzpicture}[line width=.5pt,scale=0.75]
\tikzstyle{every node}=[inner sep=1pt, minimum width=5.5pt] 
\tiny{
\node (1) at (0,1){$\bullet$};
\node (2) at (1,0) {$\bullet$};
\node (3) at (.5,-1) {$\bullet$};
\node (4) at (-.5,-1){$\bullet$};
\node (5) at (-1,0) {$\bullet$};
\node at (0,-1.7){${(1,2,3)}$};
\node at (0,1.2){$1$};
\node at (1.2,0) {$3$};
\node at  (.5,-1.2){$5$};
\node at (-.5,-1.2){$4$};
\node at (-1.2,0){$2$};
\draw[-,line width=1pt] (1) to (2);
\draw[-,line width=1pt] (1) -- (5);
\draw[-,line width=1pt] (2) -- (3);
\draw[-,line width=1pt] (2) -- (4);
\draw[-,line width=1pt] (2) -- (5);
\draw[-,line width=1pt] (3) -- (4);
\draw[-,line width=1pt] (4) -- (5);
}
\end{tikzpicture}&
\begin{tikzpicture}[line width=.5pt,scale=0.75]
\tikzstyle{every node}=[inner sep=1pt, minimum width=5.5pt] 
\tiny{
\node (1) at (0,1){$\bullet$};
\node (2) at (1,0) {$\bullet$};
\node (3) at (.5,-1) {$\bullet$};
\node (4) at (-.5,-1){$\bullet$};
\node (5) at (-1,0) {$\bullet$};
\node at (0,-1.7){${(1,1,2)}$};
\node at (0,1.2){$1$};
\node at (1.2,0) {$2$};
\node at  (.5,-1.2){$3$};
\node at (-.5,-1.2){$4$};
\node at (-1.2,0){$5$};
\draw[-,line width=1pt] (1) -- (5);
\draw[-,line width=1pt] (2) -- (5);
\draw[-,line width=1pt] (3) -- (4);
\draw[-,line width=1pt] (3) -- (5);
\draw[-,line width=1pt] (4) -- (5);
}
\end{tikzpicture}&\begin{tikzpicture}[line width=.5pt,scale=0.75]
\tikzstyle{every node}=[inner sep=1pt, minimum width=5.5pt] 
\tiny{
\node (1) at (0,1){$\bullet$};
\node (2) at (1,0) {$\bullet$};
\node (3) at (.5,-1) {$\bullet$};
\node (4) at (-.5,-1){$\bullet$};
\node (5) at (-1,0) {$\bullet$};
\node at (0,-1.7){${(2,3,3)}$};
\node at (0,1.2){$2$};
\node at (1.2,0) {$1$};
\node at  (.5,-1.2){$3$};
\node at (-.5,-1.2){$5$};
\node at (-1.2,0){$4$};
\draw[-,line width=1pt] (1) -- (5);
\draw[-,line width=1pt] (2) -- (3);
\draw[-,line width=1pt] (3) -- (4);
\draw[-,line width=1pt] (3) -- (5);
\draw[-,line width=1pt] (4) -- (5);
}
\end{tikzpicture}&\begin{tikzpicture}[line width=.5pt,scale=0.75]
\tikzstyle{every node}=[inner sep=1pt, minimum width=5.5pt] 
\tiny{
\node (1) at (0,1){$\bullet$};
\node (2) at (1,0) {$\bullet$};
\node (3) at (.5,-1) {$\bullet$};
\node (4) at (-.5,-1){$\bullet$};
\node (5) at (-1,0) {$\bullet$};
\node at (0,-1.7){${(2,3,3)}$};
\node at (0,1.2){$5$};
\node at (1.2,0) {$3$};
\node at  (.5,-1.2){$1$};
\node at (-.5,-1.2){$2$};
\node at (-1.2,0){$4$};
\draw[-,line width=1pt] (1) -- (5);
\draw[-,line width=1pt] (2) -- (3);
\draw[-,line width=1pt] (2) -- (4);
\draw[-,line width=1pt] (2) -- (5);
\draw[-,line width=1pt] (3) -- (4);
\draw[-,line width=1pt] (4) -- (5);
}
\end{tikzpicture}&
\begin{tikzpicture}[line width=.5pt,scale=0.75]
\tikzstyle{every node}=[inner sep=1pt, minimum width=5.5pt] 
\tiny{
\node (1) at (0,1){$\bullet$};
\node (2) at (1,0) {$\bullet$};
\node (3) at (.5,-1) {$\bullet$};
\node (4) at (-.5,-1){$\bullet$};
\node (5) at (-1,0) {$\bullet$};
\node at (0,-1.7){${(1,1,2)}$};
\node at (0,1.2){$1$};
\node at (1.2,0) {$2$};
\node at  (.5,-1.2){$3$};
\node at (-.5,-1.2){$4$};
\node at (-1.2,0){$5$};
\draw[-,line width=1pt] (1) -- (5);
\draw[-,line width=1pt] (2) -- (3);
\draw[-,line width=1pt] (2) -- (5);
\draw[-,line width=1pt] (3) -- (4);
\draw[-,line width=1pt] (3) -- (5);
\draw[-,line width=1pt] (4) -- (5);
}
\end{tikzpicture}\\
\hline \begin{tikzpicture}[line width=.5pt,scale=0.75]
\tikzstyle{every node}=[inner sep=1pt, minimum width=5.5pt] 
\tiny{
\node (1) at (0,1){$\bullet$};
\node (2) at (1,0) {$\bullet$};
\node (3) at (.5,-1) {$\bullet$};
\node (4) at (-.5,-1){$\bullet$};
\node (5) at (-1,0) {$\bullet$};
\node at (0,-1.7){${(2,4,4)}$};
\node at (0,1.2){$3$};
\node at (1.2,0) {$2$};
\node at  (.5,-1.2){$1$};
\node at (-.5,-1.2){$5$};
\node at (-1.2,0){$4$};
\draw[-,line width=1pt] (1) to (2);
\draw[-,line width=1pt] (1) -- (5);
\draw[-,line width=1pt] (2) -- (3);
\draw[-,line width=1pt] (4) -- (5);
}
\end{tikzpicture}&\begin{tikzpicture}[line width=.5pt,scale=0.75]
\tikzstyle{every node}=[inner sep=1pt, minimum width=5.5pt] 
\tiny{
\node (1) at (0,1){$\bullet$};
\node (2) at (1,0) {$\bullet$};
\node (3) at (.5,-1) {$\bullet$};
\node (4) at (-.5,-1){$\bullet$};
\node (5) at (-1,0) {$\bullet$};
\node at (0,-1.7){${(2,3,3)}$};
\node at (0,1.2){$2$};
\node at (1.2,0) {$1$};
\node at  (.5,-1.2){$4$};
\node at (-.5,-1.2){$5$};
\node at (-1.2,0){$3$};
\draw[-,line width=1pt] (1) to (2);
\draw[-,line width=1pt] (1) -- (5);
\draw[-,line width=1pt] (3) -- (4);
\draw[-,line width=1pt] (3) -- (5);
\draw[-,line width=1pt] (4) -- (5);
}
\end{tikzpicture}
&\begin{tikzpicture}[line width=.5pt,scale=0.75]
\tikzstyle{every node}=[inner sep=1pt, minimum width=5.5pt] 
\tiny{
\node (1) at (0,1){$\bullet$};
\node (2) at (1,0) {$\bullet$};
\node (3) at (.5,-1) {$\bullet$};
\node (4) at (-.5,-1){$\bullet$};
\node (5) at (-1,0) {$\bullet$};
\node at (0,-1.7){${(2,2,2)}$};
\node at (0,1.2){$1$};
\node at (1.2,0) {$2$};
\node at  (.5,-1.2){$3$};
\node at (-.5,-1.2){$4$};
\node at (-1.2,0){$5$};
\draw[-,line width=1pt] (1) to (2);
\draw[-,line width=1pt] (1) to (3);
\draw[-,line width=1pt] (1) -- (5);
\draw[-,line width=1pt] (2) -- (3);
\draw[-,line width=1pt] (2) -- (5);
\draw[-,line width=1pt] (3) -- (4);
\draw[-,line width=1pt] (4) -- (5);
}
\end{tikzpicture}&\begin{tikzpicture}[line width=.5pt,scale=0.75]
\tikzstyle{every node}=[inner sep=1pt, minimum width=5.5pt] 
\tiny{
\node (1) at (0,1){$\bullet$};
\node (2) at (1,0) {$\bullet$};
\node (3) at (.5,-1) {$\bullet$};
\node (4) at (-.5,-1){$\bullet$};
\node (5) at (-1,0) {$\bullet$};
\node at (0,-1.7){${(2,2,3)}$};
\node at (0,1.2){$1$};
\node at (1.2,0) {$2$};
\node at  (.5,-1.2){$4$};
\node at (-.5,-1.2){$5$};
\node at (-1.2,0){$3$};
\draw[-,line width=1pt] (1) to (2);
\draw[-,line width=1pt] (1) -- (5);
\draw[-,line width=1pt] (2) -- (3);
\draw[-,line width=1pt] (2) -- (5);
\draw[-,line width=1pt] (3) -- (4);
\draw[-,line width=1pt] (4) -- (5);
}
\end{tikzpicture}&\begin{tikzpicture}[line width=.5pt,scale=0.75]
\tikzstyle{every node}=[inner sep=1pt, minimum width=5.5pt] 
\tiny{
\node (1) at (0,1){$\bullet$};
\node (2) at (1,0) {$\bullet$};
\node (3) at (.5,-1) {$\bullet$};
\node (4) at (-.5,-1){$\bullet$};
\node (5) at (-1,0) {$\bullet$};
\node at (0,-1.7){${(2,2,3)}$};
\node at (0,1.2){$1$};
\node at (1.2,0) {$2$};
\node at  (.5,-1.2){$5$};
\node at (-.5,-1.2){$4$};
\node at (-1.2,0){$3$};
\draw[-,line width=1pt] (1) -- (5);
\draw[-,line width=1pt] (2) -- (3);
\draw[-,line width=1pt] (2) -- (5);
\draw[-,line width=1pt] (3) -- (4);
\draw[-,line width=1pt] (4) -- (5);
}
\end{tikzpicture}\\
\hline
\end{tabular}
\caption{}
\label{five_vertices}
\end{table}


\subsection{Other examples}
We conclude with some examples of graphs with more than 5 vertices.

\begin{example}\label{cycle}

Let $J_{C_{6}}$ be the binomial edge ideal of the cycle with six vertices. 

\begin{figure}[H]
\centering
\begin{tikzpicture}[line width=.5pt,scale=0.75]
		\tikzstyle{every node}=[inner sep=1pt, minimum width=5.5pt] 
\tiny{
\node (1) at (-1,1){$\bullet$};
\node (6) at (1,1) {$\bullet$};
\node (2) at (-2,0) {$\bullet$};
\node (5) at (2,0){$\bullet$};
\node (3) at (-1,-1) {$\bullet$};
\node (4) at (1,-1) {$\bullet$};
\node at (-1,1.3){$1$};
\node at (1,1.3) {$6$};
\node at  (-2.3,0){$2$};
\node at (2.3,0){$5$};
\node at (-1,-1.3){$3$};
\node at (1,-1.3) {$4$};
\draw[-,line width=1pt] (1) to (2);
\draw[-,line width=1pt] (2) to (3);
\draw[-,line width=1pt] (3) -- (4);
\draw[-,line width=1pt] (4) -- (5);
\draw[-,line width=1pt] (5) -- (6);
\draw[-,line width=1pt] (6) -- (1);
}
\end{tikzpicture}
\caption{}\label{Example3}
\end{figure}

If $K=\mathbb{Q}$, using Theorem \ref{comb_at_comple}, and Code \ref{v-number-procedure} we obtain 
$$
{\rm v}_{J_{\mathcal{K}_{6}}}(J_{C_{6}})={\rm v}(J_{C_6})={\rm v}({\rm in}_{\prec}(J_{C_{6}}))=4=\reg(S/J_{C_6})=\reg(S/{\rm in}_{\prec}(J_{C_6})).
$$
Similarly, for $C_7$ we get
$$
{\rm v}_{J_{\mathcal{K}_{7}}}(J_{C_{7}})={\rm v}(J_{C_7})={\rm v}({\rm in}_{\prec}(J_{C_{7}}))=5=\reg(S/J_{C_7})=\reg(S/{\rm in}_{\prec}(J_{C_7})).
$$
These values, together with the ones obtained above for $C_5$, suggest that for any cycle $C_n$, one may have
$$
{\rm v}_{J_{\mathcal{K}_{n}}}(J_{C_{n}})={\rm v}(J_{C_{n}})={\rm v}({\rm in}_{\prec}(J_{C_{n}}))=n-2=\reg(S/J_{C_n})=\reg(S/{\rm in}_{\prec}(J_{C_n})).
$$
\end{example}

However, in general, both inequalities in ${\rm v}(J_{G})\leq {\rm v}({\rm in}_{\prec}(J_{G}))\leq {\rm reg}(S/J_{G})$ can be strict, as shown in the next two examples.
\begin{example}[Closed graph]\label{close_graph_ex}
Let $G$ be the following closed graph, 
\begin{figure}[H]
\centering
\begin{tikzpicture}[line width=.5pt,scale=0.75]
		\tikzstyle{every node}=[inner sep=1pt, minimum width=5.5pt] 
\tiny{
\node (4) at (-1,0){$\bullet$};
\node (5) at (1,0) {$\bullet$};
\node (3) at (-2,1) {$\bullet$};
\node (2) at (-2,-1){$\bullet$};
\node (1) at (-3,0) {$\bullet$};
\node (7) at (2,1) {$\bullet$};
\node (6) at (2,-1) {$\bullet$};
\node (8) at (3,0) {$\bullet$};
\node at (-1,0.3){$4$};
\node at (1,0.3) {$5$};
\node at  (-2,1.3){$3$};
\node at (-2,-1.3){$2$};
\node at (-3.3,0){$1$};
\node at (2,1.3) {$7$};
\node at (2,-1.3) {$6$};
\node at (3.3,0) {$8$};
\draw[-,line width=1pt] (1) to (2);
\draw[-,line width=1pt] (1) to (3);
\draw[-,line width=1pt] (3) -- (2);
\draw[-,line width=1pt] (4) -- (3);
\draw[-,line width=1pt] (4) -- (2);
\draw[-,line width=1pt] (4) -- (5);
\draw[-,line width=1pt] (5) -- (6);
\draw[-,line width=1pt] (5) -- (7);
\draw[-,line width=1pt] (7) -- (6);
\draw[-,line width=1pt] (7) -- (8);
\draw[-,line width=1pt] (8) -- (6);

}
\end{tikzpicture}
\caption{}\label{Example1}
\end{figure}
\end{example}

The binomial edge ideal of $G$ is 
\begin{align*}
J_{G}=(&x_1y_2-x_2y_1,x_1y_3-x_3y_1,x_2y_3-x_3y_2,x_2y_4-x_4y_2,x_3y_4-x_4y_3,x_4y_5-x_5y_4,x_5y_6-x_6y_5,\\
&x_5y_7-x_7y_5,x_6y_7-x_7y_6,x_6y_8-x_8y_6,x_7y_8-x_8y_7)\subseteq S=K[x_{1},\dots,x_{8},y_{1},\dots,y_{8}].
\end{align*}

Proposition \ref{regBEI}, Theorem \ref{comb_at_comple}, and Theorem \ref{relation_initial} imply that 
\begin{align*}
{\rm v}_{J_{\mathcal{K}_{8}}}(J_{G})&=4\\
{\rm v}({\rm in}_{\prec}(J_{G}))=\theta(G)&=5=\reg(S/J_{G})=\reg(S/{\rm in}_{\prec}(J_{G}))=\ell_G .
\end{align*}

Moreover, using Code \ref{v-number-procedure}, we obtain ${\rm v}(J_{G})=3$. So, in this case, all the aforementioned inequalities are strict.


\begin{example}[Non-closed graph]\label{non-clos-six}
Consider the following non-closed graph $G$ with six vertices,

\begin{figure}[H]
\centering
\begin{tikzpicture}[line width=.5pt,scale=0.75]
		\tikzstyle{every node}=[inner sep=1pt, minimum width=5.5pt] 
\tiny{
\node (1) at (0,2){$\bullet$};
\node (2) at (-1,1) {$\bullet$};
\node (3) at (-1,-1) {$\bullet$};
\node (4) at (1,-1){$\bullet$};
\node (5) at (1,1) {$\bullet$};
\node (6) at (0,0) {$\bullet$};
\node at (0,2.3){$1$};
\node at (-1,1.3) {$2$};
\node at  (-1,-1.3){$3$};
\node at (1,-1.3){$4$};
\node at (1,1.3){$5$};
\node at (0,0.3) {$6$};
\draw[-,line width=1pt] (1) to (2);
\draw[-,line width=1pt] (2) to (3);
\draw[-,line width=1pt] (3) -- (4);
\draw[-,line width=1pt] (4) -- (5);
\draw[-,line width=1pt] (5) -- (1);
\draw[-,line width=1pt] (6) -- (5);
\draw[-,line width=1pt] (6) -- (2);

}
\end{tikzpicture}
\caption{}\label{Example2}
\end{figure}

The binomial edge ideal of $G$ is given by 
\begin{align*}
J_{G}&=(x_{1}y_{2}-x_{2}y_{1},x_{2}y_{3}-x_{3}y_{2},x_{3}y_{4}-x_{4}y_{3},x_{4}y_{5}-x_{5}y_{4},x_{1}y_{5}-x_{5}y_{1},x_{2}y_{6}-x_{6}y_{2},x_{5}y_{6}-x_{6}y_{5})\\
&\subseteq S=K[x_{1},\dots,x_{6},y_{1},\dots,y_{6}].
\end{align*}
If $K=\mathbb{Q}$, using Theorem \ref{comb_at_comple} and Code \ref{v-number-procedure}, we obtain 
\begin{align*}
{\rm v}(J_{G})=2&<3={\rm v}({\rm in}_{\prec}(J_{G}))={\rm v}_{J_{\mathcal{K}_{6}}}(J_{G}),\\
\reg(S/J_{G})&=3=\reg(S/{\rm in}_{\prec}(J_{G})).
\end{align*}

\end{example}


\begin{procedure}\label{v-number-procedure}

The following function computes the v-number of any graded ideal. We have used this code to compute the v-number of the binomial edge ideals presented in this section.

\begin{verbatim}
--================= v-number function ========================
vnumber = method(TypicalValue => ZZ);
vnumber (Ideal) := (I) -> ( 
  L := ass I;
  G := apply(0..#L-1,n->gens gb ideal(flatten mingens(quotient(I,L#n)/I)));
  N := apply(G,i->flatten entries i);
  F := apply(0..#L-1,i->apply(N#i,x-> if not quotient(I,x)==L#i then 0 else x)
    -set{0});
    min flatten degrees ideal(flatten F)
    )
\end{verbatim}
\end{procedure}

\begin{procedure}\label{gammafunction}
The next code computes the connected domination number of a graph $G$ using the algebraic description found in Theorem \ref{comb_at_comple}.

\begin{verbatim}
loadPackage "Graphs"
loadPackage "BinomialEdgeIdeals". 
--================= Gammac function ========================
Gammac = method(TypicalValue => ZZ);
Gammac (Graph) := (G) -> ( 
    V := vertexSet G;
    I := binomialEdgeIdeal G;
    K := minors(2,transpose genericMatrix(ring I,#V,2));
    D := gens ideal(flatten mingens(quotient(I,K)/I));
    N := flatten entries D;
    F := apply(N,x-> if not quotient(I,x)==K then 0 else x)-set{0};
    min flatten degrees ideal(flatten F)
    )
\end{verbatim}
\end{procedure}


\section*{Acknowledgments}
 The authors thank Rafael H. Villarreal and Hongmiao Yu for useful discussions. We are also grateful to Rafael H. Villarreal for suggesting \emph{Macaulay2} code \ref{v-number-procedure} and for valuable comments on a previous version of this work. 
\nocite{*}
\bibliographystyle{alpha}
\bibliography{References}

\newcommand{\etalchar}[1]{$^{#1}$}
\begin{thebibliography}{NnBPV21}

\bibitem[ASS23]{ambhore2023mathrm}
Siddhi~Balu Ambhore, Kamalesh Saha, and Indranath Sengupta.
\newblock The {\rm v}-number of binomial edge ideals.
\newblock {\em arXiv:2304.06416 {\rm (preprint)}}, 2023.

\bibitem[BL96]{Bo-Liu}
Cheng Bo and Bolian Liu.
\newblock Some inequalities about connected domination number.
\newblock {\em Discrete Mathematics}, 159(1):241--245, 1996.

\bibitem[Civ23]{Civan}
Yusuf Civan.
\newblock The {$v$}-number and {C}astelnuovo-{M}umford regularity of graphs.
\newblock {\em J. Algebraic Combin.}, 57(1):161--169, 2023.

\bibitem[CST{\etalchar{+}}20]{CSTPV}
Susan~M. Cooper, Alexandra Seceleanu, \c{S}tefan~O. Toh\u{a}neanu, Maria~Vaz
  Pinto, and Rafael~H. Villarreal.
\newblock Generalized minimum distance functions and algebraic invariants of
  {G}eramita ideals.
\newblock {\em Adv. in Appl. Math.}, 112:101940, 34, 2020.

\bibitem[CWY00]{caro-west-yuster}
Yair Caro, Douglas~B. West, and Raphael Yuster.
\newblock Connected domination and spanning trees with many leaves.
\newblock {\em SIAM J. Discrete Math.}, 13(2):202--211, 2000.

\bibitem[DHH13]{Wyatt}
Wyatt~J. Desormeaux, Teresa~W. Haynes, and Michael~A. Henning.
\newblock Bounds on the connected domination number of a graph.
\newblock {\em Discrete Applied Mathematics}, 161(18):2925--2931, 2013.

\bibitem[Eis95]{CA-Eis}
David Eisenbud.
\newblock {\em Commutative algebra}, volume 150 of {\em Graduate Texts in
  Mathematics}.
\newblock Springer-Verlag, New York, 1995.
\newblock With a view toward algebraic geometry.

\bibitem[Eis05]{Eise-Sys}
David Eisenbud.
\newblock {\em The geometry of syzygies}, volume 229 of {\em Graduate Texts in
  Mathematics}.
\newblock Springer-Verlag, New York, 2005.
\newblock A second course in commutative algebra and algebraic geometry.

\bibitem[EZ15]{Via-Zar}
Viviana Ene and Andrei Zarojanu.
\newblock On the regularity of binomial edge ideals.
\newblock {\em Math. Nachr.}, 288(1):19--24, 2015.

\bibitem[FHVT10]{CHHVTuyl}
Christopher~A. Francisco, Huy~T\`ai H\`a, and Adam Van~Tuyl.
\newblock Associated primes of monomial ideals and odd holes in graphs.
\newblock {\em J. Algebraic Combin.}, 32(2):287--301, 2010.

\bibitem[GK98]{guha}
Sudipto Guha and Samir Khuller.
\newblock Approximation algorithms for connected dominating sets.
\newblock {\em Algorithmica}, 20:374--387, 1998.

\bibitem[GKR93]{GKR}
Anthony~V. Geramita, Martin Kreuzer, and Lorenzo Robbiano.
\newblock Cayley-{B}acharach schemes and their canonical modules.
\newblock {\em Trans. Amer. Math. Soc.}, 339(1):163--189, 1993.

\bibitem[GRV21]{Gri-Re-Vill}
Gonzalo Grisalde, Enrique Reyes, and Rafael~H. Villarreal.
\newblock Induced matchings and the v-number of graded ideals.
\newblock {\em Mathematics}, 9(22), 2021.

\bibitem[GS]{Mac2}
Daniel~R. Grayson and Michael~E. Stillman.
\newblock Macaulay2, a software system for research in algebraic geometry.
\newblock Available at \url{http://www.math.uiuc.edu/Macaulay2/}.

\bibitem[Har69]{Harary}
Frank Harary.
\newblock {\em Graph theory}.
\newblock Addison-Wesley Publishing Co., Reading, Mass.-Menlo Park,
  Calif.-London, 1969.

\bibitem[HHH{\etalchar{+}}10]{HHHKR}
J\"{u}rgen Herzog, Takayuki Hibi, Freyja Hreinsd\'{o}ttir, Thomas Kahle, and
  Johannes Rauh.
\newblock Binomial edge ideals and conditional independence statements.
\newblock {\em Adv. in Appl. Math.}, 45(3):317--333, 2010.

\bibitem[HHO18]{HHO}
J\"{u}rgen Herzog, Takayuki Hibi, and Hidefumi Ohsugi.
\newblock {\em Binomial ideals}, volume 279 of {\em Graduate Texts in
  Mathematics}.
\newblock Springer, Cham, 2018.

\bibitem[JG79]{Garey-Johnson}
David~S. Johnson and Michael Garey.
\newblock Computers and {I}ntractability: {A} {G}uide to the {T}heory of
  {N}{P}-completeness.
\newblock {\em Freeman, New York}, 1979.

\bibitem[JNRR19]{Jay-Nara}
A.~V. Jayanthan, N.~Narayanan, and B.~V. Raghavendra~Rao.
\newblock Regularity of binomial edge ideals of certain block graphs.
\newblock {\em Proceedings - Mathematical Sciences}, 129(3):36, 2019.

\bibitem[JV21]{DR}
Delio Jaramillo and Rafael~H. Villarreal.
\newblock The v-number of edge ideals.
\newblock {\em J. Combin. Theory Ser. A}, 177:Paper No. 105310, 35, 2021.

\bibitem[KS16]{Kiani-Mada}
Dariush Kiani and Sara {Saeedi Madani}.
\newblock The {C}astelnuovo-{M}umford regularity of binomial edge ideals.
\newblock {\em Journal of Combinatorial Theory, Series A}, 139:80--86, 2016.

\bibitem[KSKW12]{karami}
Hossein Karami, Seyed~Mahmoud Sheikholeslami, Abdollah Khodkar, and Douglas~B
  West.
\newblock Connected domination number of a graph and its complement.
\newblock {\em Graphs and Combinatorics}, 28(1):123--131, 2012.

\bibitem[KSS{\etalchar{+}}22]{Kosari}
S.~Kosari, Z.~Shao, S.~M. Sheikholeslami, M.~Chellali, R.~Khoeilar, and
  H.~Karami.
\newblock A proof of a conjecture on the connected domination number.
\newblock {\em Bulletin of the Malaysian Mathematical Sciences Society},
  45(6):3523--3533, 2022.

\bibitem[LTW{\etalchar{+}}05]{li2005}
Yingshu Li, My~T Thai, Feng Wang, Chih-Wei Yi, Peng-Jun Wan, and Ding-Zhu Du.
\newblock On greedy construction of connected dominating sets in wireless
  networks.
\newblock {\em Wireless Communications and Mobile Computing}, 5(8):927--932,
  2005.

\bibitem[Mat89]{CR-Mat}
Hideyuki Matsumura.
\newblock {\em Commutative ring theory}, volume~8 of {\em Cambridge Studies in
  Advanced Mathematics}.
\newblock Cambridge University Press, Cambridge, second edition, 1989.
\newblock Translated from the Japanese by M. Reid.

\bibitem[MM13]{Mat-Mur}
Kazunori Matsuda and Satoshi Murai.
\newblock Regularity bounds for binomial edge ideals.
\newblock {\em J. Commut. Algebra}, 5(1):141--149, 2013.

\bibitem[MMM22]{mafuta}
P~Mafuta, S~Mukwembi, and S~Munyira.
\newblock Radius, leaf number, connected domination number and minimum degree.
\newblock {\em Quaestiones Mathematicae}, pages 1--8, 2022.

\bibitem[MS14]{Fatemeh-Leila}
Fatemeh Mohammadi and Leila Sharifan.
\newblock Hilbert function of binomial edge ideals.
\newblock {\em Comm. Algebra}, 42(2):688--703, 2014.

\bibitem[NnBPV18]{NBPV18}
Luis N\'{u}\~{n}ez Betancourt, Yuriko Pitones, and Rafael~H. Villarreal.
\newblock Footprint and minimum distance functions.
\newblock {\em Commun. Korean Math. Soc.}, 33(1):85--101, 2018.

\bibitem[NnBPV21]{NBPV20}
Luis N\'{u}\~{n}ez Betancourt, Yuriko Pitones, and Rafael~H. Villarreal.
\newblock Bounds for the minimum distance function.
\newblock {\em An. \c{S}tiin\c{t}. Univ. ``Ovidius'' Constan\c{t}a Ser. Mat.},
  29(3):229--242, 2021.

\bibitem[Oht11]{Ohtani}
Masahiro Ohtani.
\newblock Graphs and ideals generated by some 2-minors.
\newblock {\em Comm. Algebra}, 39(3):905--917, 2011.

\bibitem[Oht13]{OhtBin}
Masahiro Ohtani.
\newblock Binomial edge ideals of complete multipartite graphs.
\newblock {\em Communications in Algebra}, 41(10):3858--3867, 2013.

\bibitem[PLH83]{Pfaff}
John Pfaff, Renu Laskar, and Stephen~T. Hedetniemi.
\newblock {N}{P}-completeness of total and connected domination, and
  irredundance for bipartite graphs.
\newblock {\em Technical Report 428, Dept. Mathematical Sciences, Clemenson
  Univ.}, 1983.

\bibitem[RSK21]{Mala-Mada}
Mohammad {Rouzbahani Malayeri}, Sara {Saeedi Madani}, and Dariush Kiani.
\newblock A proof for a conjecture on the regularity of binomial edge ideals.
\newblock {\em Journal of Combinatorial Theory, Series A}, 180:105432, 2021.

\bibitem[SFSTY21]{FASKTN}
Seyed~Amin Seyed~Fakhari, Kosuke Shibata, Naoki Terai, and Siamak Yassemi.
\newblock Cohen-{M}acaulay edge-weighted edge ideals of very well-covered
  graphs.
\newblock {\em Comm. Algebra}, 49(10):4249--4257, 2021.

\bibitem[SK18]{Kiani-Mada2}
Sara {Saeedi Madani} and Dariush Kiani.
\newblock Binomial edge ideals of regularity 3.
\newblock {\em Journal of Algebra}, 515:157--172, 2018.

\bibitem[SS22]{Saha-Seng}
Kamalesh Saha and Indranath Sengupta.
\newblock The {${\rm v}$}-number of monomial ideals.
\newblock {\em J. Algebraic Combin.}, 56(3):903--927, 2022.

\bibitem[SVV94]{SIMVASVI}
Aron Simis, Wolmer~V. Vasconcelos, and Rafael~H. Villarreal.
\newblock On the ideal theory of graphs.
\newblock {\em J. Algebra}, 167(2):389--416, 1994.

\bibitem[SW79]{Sam-Wal}
E.~Sampathkumar and H.~B. Walikar.
\newblock The connected domination number of a graph.
\newblock {\em J. Math. Phys. Sci.}, (13):607--613, 1979.

\bibitem[SZ14]{Sche-Zafar}
Peter Schenzel and Sohail Zafar.
\newblock Algebraic properties of the binomial edge ideal of a complete
  bipartite graph.
\newblock {\em An. \c{S}tiin\c{t}. Univ. ``Ovidius'' Constan\c{t}a Ser. Mat.},
  22(2):217--237, 2014.

\bibitem[Vil90]{RCOMGR}
Rafael~H. Villarreal.
\newblock Cohen-{M}acaulay graphs.
\newblock {\em Manuscripta Math.}, 66(3):277--293, 1990.

\bibitem[Vil15]{mon-alg}
Rafael~H. Villarreal.
\newblock {\em Monomial algebras}.
\newblock Monographs and Research Notes in Mathematics. CRC Press, Boca Raton,
  FL, second edition, 2015.

\bibitem[WAF04]{wan2004}
Peng-Jun Wan, Khaled~M Alzoubi, and Ophir Frieder.
\newblock Distributed construction of connected dominating set in wireless ad
  hoc networks.
\newblock {\em Mobile Networks and Applications}, 9:141--149, 2004.

\bibitem[WZXZ21]{HZXZ}
Hong Wang, Guangjun Zhu, Li~Xu, and Jiaqi Zhang.
\newblock Algebraic properties of edge ideals of some vertex-weighted oriented
  cyclic graphs.
\newblock {\em Algebra Colloq.}, 28(2):253--268, 2021.

\end{thebibliography}

\end{document}